\documentclass[a4paper]{scrartcl}
\usepackage[utf8]{inputenc}
\usepackage{amsmath,amssymb,amsthm,amsfonts}
\usepackage{mathtools,thmtools}
\usepackage[british]{babel}
\usepackage{csquotes}
\usepackage{hyperref}
\usepackage[nameinlink,noabbrev]{cleveref}
\usepackage{enumerate}
\usepackage{xcolor}
\usepackage{comment}
\usepackage{romanbar}

\RequirePackage{tikz}
    \usetikzlibrary{positioning}
    \usetikzlibrary{decorations.pathreplacing}

\usepackage[maxbibnames=5,sorting=nyt,sortcites,giveninits=true]{biblatex}
\DeclareNameAlias{sortname}{family-given}
\addbibresource{all.bib}

\usepackage{algorithm, caption}
\captionsetup[algorithm]{
  labelfont=bf,
  labelsep=period
}

\hyphenation{semi-group}

\newcommand{\R}     {\mathbb{R}}

\newcommand{\eps}   {\varepsilon}

\newcommand{\TV}    {\textup{TV}}

\newcommand{\rn}[1]{\Romanbar{#1}}
\newcommand{\tmix}{t_{\operatorname{mix}}}
\newcommand{\ind}{\mathbf{1}}
\newcommand{\diam}{\operatorname{diam}}
\newcommand{\proj}{\operatorname{proj}}
\newcommand{\diag}{\operatorname{diag}}
\newcommand{\acrj}{\mathrm{a/r}}
\newcommand{\ac}{\mathrm{a}}
\newcommand{\rj}{\mathrm{r}}
\newcommand{\C}{\mathcal{C}}
\newcommand{\lam}{\lambda}
\newcommand{\Lam}{\Lambda}
\newcommand{\cat}{\mathrm{Categorical}}
\newcommand{\NUTS}{\mathrm{NUTS}}
\newcommand{\HMC}{\mathrm{HMC}}

\DeclarePairedDelimiterX{\norm}[1]{\lVert}{\rVert}{#1}

\DeclareMathOperator{\tr}   {tr}

\DeclareMathOperator{\Unif} {Unif}
\DeclareMathOperator{\unif} {unif}

\theoremstyle{plain}
\newtheorem{theorem}{Theorem}
\newtheorem{lemma}[theorem]{Lemma}
\newtheorem{proposition}[theorem]{Proposition}

\theoremstyle{definition}
\newtheorem{definition}[theorem]{Definition}

\newtheorem{remark}[theorem]{Remark}
\newtheorem{remark*}{Remark}

\crefname{lemma}{lemma}{lemmas}
\crefname{theorem}{theorem}{theorems}
\crefname{assumption}{assumption}{assumptions}

\crefname{assualph}{assumption}{assumptions}

\title{On Accelerated Mixing of the\\No-U-turn Sampler}
\author{Stefan Oberd\"orster\thanks{Institute for Applied Mathematics, University of Bonn, 
\href{mailto:oberdoerster@uni-bonn.de}{\texttt{oberdoerster@uni-bonn.de}}}
}
\date{\today}

\begin{document}

\maketitle

\begin{abstract}
Recent progress on the theory of variational hypocoercivity established that Randomized Hamiltonian Monte Carlo---at criticality---can achieve pronounced acceleration in its convergence and hence sampling performance over diffusive dynamics.
Manual critical tuning being unfeasible in practice has motivated automated algorithmic solutions, notably the No-U-turn Sampler.
Beyond its empirical success, a rigorous study of this method's ability to achieve accelerated convergence has been missing.
We initiate this investigation combining a concentration of measure approach to examine the automatic tuning mechanism with a coupling based mixing analysis for Hamiltonian Monte Carlo.
In certain Gaussian target distributions, this yields a precise characterization of the sampler's behavior resulting, in particular, in rigorous mixing guarantees describing the algorithm's ability and limitations in achieving accelerated convergence.
\end{abstract}

\section{Introduction}

Markov chain Monte Carlo (MCMC) concerns itself with the task of sampling complex probability distributions---representations of data.
Akin to the microscope, it reveals empirical insight invisible to the naked eye.
This puts MCMC sampling at the core of natural as well as social sciences \cite{GelmanBook13,andrieu2003introduction,LeRoSt2010A,bardsley2012mcmc,kaipio2005statistical,stuart2010inverse}.

Several factors complicate the sampling from distributions arising in practice.
For instance, realistic models typically involve many parameters, putting the distributions encoding their relationships in spaces of high dimension.
Furthermore, different characteristic scales across parameters or interactions amongst them may produce ill-conditioned and multi-scale distributions combining both directions or regions of concentrated as well as spread out probability mass.
Illustrating toy examples include Gaussian measures comprising both large and small variance directions along with Rosenbrock and funnel distributions \cite{GoodmanWeareEnsemble,NURS}.

In light of these challenges, a particularly notable class of sampling dynamics are non-reversible lifts \cite{EberleLoerler24} which emerge from lifting reversible dynamics to phase space by adding a notion of velocity.
For equilibrium distributions $\mu(dx)\propto e^{-U(x)}dx$ on Euclidean space, a prominent example is Hamiltonian flow
\begin{equation}\label{eq:HFode}
    dX_t\ =\ V_t\,dt\;,\quad dV_t\ =\ -\nabla U(X_t)\,dt
\end{equation}
arising from lifting the overdamped Langevin diffusion
\[ dX_t\ =\ -\nabla U(X_t)\,dt + \sqrt 2\,dB_t\;. \]
Combining Hamiltonian flow with suitable velocity randomization yields ergodic Markov processes.
On one hand, continuous partial randomization via an Ornstein-Uhlenbeck process produces the Langevin diffusion
\begin{equation}\label{eq:LD}
    dX_t\ =\ V_t\,dt\;,\quad dV_t\ =\ -\nabla U(X_t)\,dt - \lambda V_t\,dt + \sqrt{2\lambda}\,dB_t
\end{equation}
with friction $\lambda>0$, see \cite{Pavliotis14}.
On the other, discrete full randomization prompts Randomized Hamiltonian Monte Carlo (Randomized HMC) which follows Hamiltonian flow for integration times $T\sim\mathrm{Exp}(\lambda)$ between which the velocity is fully refreshed from a canonical Gaussian distribution, see \cite{BoSa2017}.

These non-reversible dynamics stand out due to their ability to achieve accelerated convergence to equilibrium compared to the underlying reversible overdamped diffusion.
This was recently shown \cite{lu2022explicit,cao2023explicit,EberleLoerler24} following the variational approach to hypocoercivity developed in \cite{albritton2024variational}.
Specifically, assuming the Poincar\'e inequality
\begin{equation}\label{eq:poincare}
    \int_{\R^d} f^2\,d\mu\ \leq\ \frac1m\int_{\R^d}|\nabla f|^2\,d\mu\quad\text{for $m>0$ and all $f\in H^1(\mu)$ with $\int_{\R^d} f\,d\mu=0$}
\end{equation}
together with a negative lower curvature bound of order $m$, $\nabla^2U\gtrsim -mI_d$, allowing for mild non-convexity, and a superlinear growth condition on $U$ at infinity, the relaxation time of critical Randomized HMC is of order $m^{-1/2}$, see \cite{EberleLoerler24}.\footnote{A similar result holds for the Langevin diffusion \cite{cao2023explicit}. See \cite{EberleLoerler2024manifold} for an extension of the theory to Riemannian manifolds with boundary and \cite{francis2025flowpoincare} for a flexibly applicable framework.}
Criticality refers to the refresh rate $\lambda$ being of order $m^{1/2}$, meaning that the integration times along Hamiltonian flow between full velocity randomizations are of order $m^{-1/2}$.
In contrast, assuming the Poincar\'e inequality \eqref{eq:poincare}, the relaxation time of the overdamped Langevin diffusion is of order $m^{-1}$, see \cite{bakry_book}.
Note that the convergence of neither dynamics explicitly depends on the dimension of the ambient space.
The square root acceleration of critical Randomized HMC compared to overdamped Langevin---known as the diffusive-to-ballistic speed-up---is the best possible acceleration achievable through lifting \cite{EberleLoerler24}.

Heuristically, this can be explained using that the Poincar\'e inequality \eqref{eq:poincare} implies a concentration inequality of the form
\begin{equation}\label{eq:concentration}
    \mu(A_r)\ \geq\ 1-\exp\bigl(-cm^{1/2}r\bigr)\quad\text{for all $r\geq0$ whenever $\mu(A)\geq1/2$,}
\end{equation}
where $c>0$ is some numerical constant and $A_r=\{x\in\R^d:\mathrm{dist}(x,A)<r\}$ denotes the $r$-neighborhood of $A$, see \cite{bakry_book}.
It asserts that virtually all mass of $\mu$ concentrates within a distance of order $m^{-1/2}$.
Traversing this distance ballistically following Hamiltonian flow with unit velocity, which the refreshed velocities of Randomized HMC are in every direction, takes precisely an integration time of order $m^{-1/2}$.
In the assumed absence of relevant barriers, critical Randomized HMC therefore explores the region in which the distribution's mass concentrates globally between consecutive velocity randomizations.
Hence, a constant number of randomizations suffice to converge, corresponding to a relaxation time of order $m^{-1/2}$.
On the other hand, shortening the integration times restricts the motion between randomizations to be increasingly local.
In the overdamped limit, these independent local moves yield diffusive motion which takes physical time of order $m^{-1}$ to explore the distribution's region of concentration.
See \Cref{fig:Gaussian_acceleration} for further illustration in a Gaussian example.
In summary, the diffusive-to-ballistic speed-up in Randomized HMC hinges on critical integration times $T$ of order $m^{-1/2}$.

\begin{figure}
\centering
\begin{tikzpicture}
\draw[dashed] (0,0) ellipse (7 and 1);

\draw[thick] plot[domain=0:16,samples=100] ({cos(\x/7 r)*(-5)+sin(\x/7 r)*7*0.1},{cos(\x r)*(-0.5)+sin(\x r)*0.5});
\draw[gray,thick,->] plot[domain=16:22.2,samples=100] ({cos(\x/7 r)*(-5)+sin(\x/7 r)*7*0.1},{cos(\x r)*(-0.5)+sin(\x r)*0.5});

\draw[line width=1.5pt, ->] (-5,-0.5) -- (-4.9,0) node[left] {$V_0$};
\filldraw (-5,-0.5) circle (2pt) node[right] {$X_0$};
\filldraw ({cos(16/7 r)*(-5)+sin(16/7 r)*7*0.1},{cos(16 r)*(-0.5)+sin(16 r)*0.5}) circle (2pt) node[right] {$X_T$};

\node[gray,above right] at ({cos(22/7 r)*(-5)+sin(22/7 r)*7*0.1+0.1},{cos(22 r)*(-0.5)+sin(22 r)*0.5+0.1}) {U-turn};

\end{tikzpicture}
\caption{\textit{Ballistic motion along Hamiltonian flow \eqref{eq:HFode} in the bivariate Gaussian $\mu=\mathcal N(0,\,\diag(m^{-1},1))$ with $m\ll1$.
At criticality, the integration times $T$ of Randomized HMC are of order $m^{-1/2}$, producing global moves across the ellipse, in which the mass of $\mu$ concentrates, between consecutive velocity randomizations.
Qualitatively shorter integration times restrict to local moves along the large variance direction, slowing convergence.
A U-turn according to \eqref{eq:Uturncondition} occurs after a critical integration time, motivating the No-U-turn Sampler.}}
\label{fig:Gaussian_acceleration}
\end{figure}
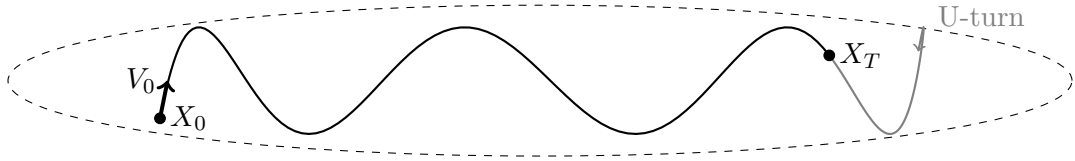

Many particularly challenging target distributions encountered in practical sampling applications feature flat regions or directions.
The resulting small values of $m\ll1$ make diffusive sampling dynamics slow and at times prohibitively computationally expensive.
The diffusive-to-ballistic speed-up of critical Randomized HMC and related dynamics then offers a powerful enhancement of sampling performance.
However, the value and even order of magnitude of $m$ required to realize critical tuning is generally unknown in practice.

Achieving acceleration without knowledge of $m$ inspired Hoffman and Gelman \cite{HoGe2014} to introduce the No-U-turn Sampler (NUTS), which is now the default sampler in many probabilistic programming environments \cite{carpenter2016stan,salvatier2016probabilistic,nimble-article:2017,ge2018t,phan2019composable}.
It is an algorithmic implementation of Randomized HMC viewed as the discrete-time Markov chain on position space obtained from only observing the continuous-time process at its refresh times.
The method's key feature is self-tuning of the integration times or equivalently the velocity refresh rate.
While criticality of Randomized HMC depends on the global quantity $m$, NUTS employs a more local approach: Between velocity randomizations, it builds an orbit along Hamiltonian flow and uses its shape to infer a suitable integration time.
Roughly speaking, given a position $X_0$ (the previous state), the refreshed velocity $V_0$ and a step size $h>0$, NUTS extends the orbit $\{(X_{hi},V_{hi})\}_{i\in I}$ with $0\in I\subset\mathbb Z$ along Hamilton flow \eqref{eq:HFode} until it encounters a U-turn in the sense that
\begin{equation}\label{eq:Uturncondition}
    \min\bigr(v_+\cdot(x_+-x_-),\,v_-\cdot(x_+-x_-)\bigr)\ <\ 0\;,
\end{equation}
where $(x_+,v_+)=(X_{h\max I},V_{h\max I})$ and $(x_-,v_-)=(X_{h\min I},V_{h\min I})$ are the orbit's endpoints in phase space.
From the resulting orbit, the next state of the NUTS chain is selected according to a probability distribution filtering out discretization error introduced by the leapfrog approximation of Hamiltonian flow used in practice.
Importantly, assuming these errors to be controlled, the integration time between consecutive states of NUTS is comparable to the total orbit length in physical time.
Hence, criticality and therewith the diffusive-to-ballistic speed-up in NUTS depend on the orbit length obtained from the U-turn condition \eqref{eq:Uturncondition}.

While NUTS enjoys great popularity among practitioners, theoretical insight into its properties remains scarce.
In the course of a recent uptick in research activity, reversibility \cite{BouRabeeCarpenterMarsden2024,durmus2023convergence}, ergodicity \cite{durmus2023convergence} and non-asymptotic mixing guarantees in canonical Gaussian distributions \cite{BouRabeeOberdoersterNUTS1} has been shown.
However, a central question in light of the method's origin remains open:
\begin{quote}
\emph{Does the No-U-turn Sampler achieve the diffusive-to-ballistic speed-up of critical Randomized HMC?}
\end{quote}

This work makes a first step towards this question by analyzing NUTS in Gaussian target distributions.
While this setting might seem insignificant at first sight, it requires a surprisingly rich mathematical theory.
In particular, it allows for a very precise rigorous description of the method's behavior, resulting in sharp mixing guarantees capable of capturing the diffusive-to-ballistic speed-up.
Beyond Gaussian distributions, the theoretical foundation requires substantial development in order to attack this question.

Concretely, we prove a concentration inequality for the U-turn property, from which we obtain an accurate characterization of the U-turn based orbit selection.
This enables us to precisely predict the orbit length selected by NUTS in certain settings and therefore positions us to investigate the question regarding accelerated convergence posed above.

Specifically, we first consider isotropic Gaussian distributions for which we find NUTS to select orbits of critical length under natural assumptions, indicating accelerated convergence.

Subsequently, we study the more challenging class of two-scale Gaussian distributions, products of two isotropic Gaussians.
While, within this group of targets, we discover a phase of distributions in which NUTS reliably selects orbits sufficiently long for acceleration, we however also show the existence of two-scale Gaussians in which NUTS with certain fixed step sizes is limited to short orbits not sufficient for acceleration.
In a suitable infinite dimensional limit, we further show this dichotomy to strengthen into an abrupt phase transition separating two-scale Gaussians in which NUTS selects long orbits for almost all step sizes from ones in which NUTS selects short orbits for a set of step sizes of strictly positive measure.

This insight into the orbit selection of NUTS is subsequently combined with a state-of-the-art mixing analysis via couplings.
In particular, we show that if NUTS selects orbits of critical length, it achieves the diffusive-to-ballistic speed-up.

The paper is structured as follows:
In the next section, we formally introduce NUTS.
In \textsection\ref{sec:conc}, we show concentration of the U-turn property, its implications on orbit selection, and subsequently thoroughly discuss the results in isotropic and two-scale Gaussian distributions.
Finally, \textsection\ref{sec:mixing} is devoted to mixing guarantees for NUTS in two-scale Gaussians.
The main results are primarily discussed around \Cref{prop:2S} and \Cref{thm:NUTSmixing}.

\subsection*{Acknowledgements}

I thank Nawaf Bou-Rabee and Andreas Eberle for their guidance and support.
I further thank Bob Carpenter, Yuansi Chen and Francis L\"orler for valuable discussions.
Financial supported by the Deutsche Forschungsgemeinschaft (DFG, German Research Foundation) under Germany’s Excellence Strategy – EXC-2047/1 – 390685813 is gratefully acknowledged.

\section{The No-U-turn Sampler}\label{sec:NUTS}

In this section, we formally introduce the No-U-turn Sampler (NUTS).
After fixing some basic notation, we first discuss Hamiltonian Monte Carlo and the U-turn property, the fusion of which yields NUTS.
Subsequently, we describe NUTS in detail.
Our presentation follows \cite{BouRabeeOberdoersterNUTS1}.
Beyond the classical algorithm, which uses the leapfrog integrator to discretize Hamiltonian flow, we define a variant employing exact Hamiltonian flow.
This allows for a simplified analysis of the U-turn mechanism---the core feature of NUTS.

We focus on target probability distributions $\mu$ on Euclidean space $\mathbb R^d$ with continuously differentiable density also denoted by $\mu$.  The Hamiltonian
\begin{equation*}\label{eq:H}
    H(x,v)\ =\ -\log\mu(x)\ +\ \frac12|v|^2\;,
\end{equation*}
where $x \in \mathbb{R}^d$ and $v \in \mathbb{R}^d$ represent position and velocity, respectively, extends the target to the Boltzmann distribution $\mu\otimes\gamma_d$ on phase space $\mathbb R^{2d}$, where $\gamma_d$ denotes the canonical Gaussian measure on $\mathbb R^d$.
Let $\phi_t:\mathbb{R}^{2d}\to\mathbb{R}^{2d}$ with $t\in\mathbb R$ be Hamiltonian flow solving
\[ \frac{d}{dt}\,\phi_t(x,v)\ =\ \bigr(v,\nabla\log\mu(x)\bigr)\quad\text{with $\phi_0=\mathrm{id}$.} \]
Further, let $\Phi_h: \mathbb{R}^{2d} \to \mathbb{R}^{2d}$ denote one corresponding leapfrog step of size $h>0$.  For any $i \in \mathbb{Z}$, define $\Phi_{h}^i:  \mathbb{R}^{2d} \to \mathbb{R}^{2d}$ recursively by $\Phi^0_{h}=\mathrm{id}$ and $\Phi^{i + 1}_{h} = \Phi_{h} \circ \Phi^i_{h}$.
For convenience, we refer to $\phi_t$ with $t\in\mathbb R$ as Hamiltonian flow and to $\Phi_h^{t/h}$ with $t\in h\mathbb Z$ as leapfrog flow.
As we work with both, we write $\varphi_t$ as placeholder for either.

Define the projections from phase space to position and velocity space $\proj_1^d,\proj_2^d:\mathbb R^{2d}\to\mathbb R^d$ by
\begin{equation}\label{eq:proj}
    \bigr(\proj_1^d(x,v),\,\proj_2^d(x,v)\bigr)\ =\ (x,v)\quad\text{for all $(x,v)\in\mathbb R^{2d}$.}
\end{equation}

\subsection{Hamiltonian Monte Carlo Methods}\label{sec:HMC}

We now give a general definition of the family of Hamiltonian Monte Carlo methods with full velocity randomization.

\begin{definition}\label{def:HMC}
Let $\tau:\mathbb R^{2d}\to\mathcal P(\mathbb R)$ map $(x,v)$ to a probability measure $\tau_{x,v}$ on $\mathbb R$.
A Markov transition kernel $\pi_{\HMC(\tau)}$ belongs to the family of Hamiltonian Monte Carlo (HMC) methods with full velocity randomization if its transitions $X \sim \pi_{\HMC(\tau)}(x,\cdot)$ for $x \in \mathbb{R}^d$ take the form
\begin{equation}\label{eq:HMCkernel}
    X\ =\ \proj_1^d\varphi_T(x,v) \quad\text{with $v\sim\gamma_d$ and $T\sim\tau_{x,v}$,}
\end{equation}
where we additionally require $\mathrm{supp}(\tau_{x,v})=h\mathbb Z$  for all $(x,v)\in\mathbb R^{2d}$ in case $\varphi_t=\Phi_h^{t/h}$ represents leapfrog flow.
\end{definition}

In words, the transitions of an HMC method with full velocity randomization proceed as follows: Given the initial position $x$, a velocity $v$ is drawn from the canonical Gaussian measure. From the resulting point in phase space, Hamiltonian or leapfrog flow is run for an integration time $T\sim\tau_{x,v}$. The obtained position is then selected as the next state of the chain.

This definition covers a wide variety of Monte Carlo methods based on Hamiltonian flow.
For instance, deterministic integration times $\tau\equiv\delta_t$ for fixed $t$ yield classical (unadjusted) HMC.
Restricting to one leapfrog step, $t=h$, induces one transition of the unadjusted Langevin algorithm, the Euler discretization of the overdamped Langevin diffusion.
Adding a Metropolis filter via
\[ \tau_{x,v}\ =\ e^{-(H\circ\Phi_h^{t/h}-H)^+(x,v)}\delta_t\ +\ \bigr(1-e^{-(H\circ\Phi_h^{t/h}-H)^+(x,v)}\bigr)\delta_0 \]
produces Metropolis-adjusted HMC for $t\in h\mathbb Z$ and the Metropolis-adjusted Langevin algorithm (MALA) for $t=h$.

Of particular interest to us are HMC methods whose integration times are randomized beyond the elimination of discretization error.
Randomized HMC, as discrete-time Markov chain on position space, arises from the definition with $\tau\equiv\mathrm{Exp}(\lambda)$ for some $\lambda>0$.
NUTS also falls within the definition's scope, see \textsection\ref{sec:integration time}.

HMC methods with partial velocity randomization such as discretizations of the Langevin diffusion \eqref{eq:LD} are not covered by the definition.

\subsection{The U-turn Property}\label{sec:u-turn}

A fundamental question in the design of Hamiltonian Monte Carlo methods with full velocity randomization is: Given a state $x$ and a velocity $v$, what integration time distributions $\tau_{x,v}$ corresponds to criticality, achieving the diffusive-to-ballistic speed-up?
As discussed in the introduction, knowledge of global information about the target distribution's geometry, as for instance encoded in the Poincar\'e inequality \eqref{eq:poincare}, yields a uniform answer.
In applications, however, such global information is typically unavailable.
In addition, target distributions may feature local geometries that vary throughout space.
Then, the optimal integration time distributions promoting the shortest possible integration times while achieving accelerated convergence vary as well.

The No-U-turn architecture was introduced to substitute explicit knowledge of the target's geometry with a subtle adaptation strategy: It constructs orbits along Hamiltonian flow and infers the integration time from the geometric information encoded in their shape via the U-turn property.
Specifically, define an index orbit as a set of consecutive integers $I\subset\mathbb Z$ which, together with a set of initial conditions $(x,v)\in\mathbb R^{2d}$, defines an orbit $\{\varphi_{hi}(x,v)\}_{i\in I}$, a collection of states in phase space following either Hamiltonian or leapfrog flow.
Correspondingly, we refer to such orbits as Hamiltonian orbits and leapfrog orbits.
An orbit has the U-turn property if and only if
\begin{equation}\label{eq:u-turn}
    \min\bigr(v_+\cdot(x_+-x_-),\,v_-\cdot(x_+-x_-)\bigr)\ <\ 0\;,
\end{equation}
where $(x_+,v_+)=\varphi_{h\max I}(x,v)$ and $(x_-,v_-)=\varphi_{h\min I}(x,v)$ are the orbit's endpoints.
As, given $(x,v)$, index orbits and orbits are dual, we equivalently say that an orbit and the corresponding index orbit have the U-turn property.

\subsection{The Sub-U-turn Property and Orbit Construction}

While Randomized HMC as continuous-time Markov process in phase space is non-reversible, which is crucial to its ability of achieving accelerated convergence, as discrete-time Markov chain in position space it is reversible.
In order to ensure NUTS approximates the correct distribution, its transitions are also designed to be reversible with respect to the target distribution \cite{BouRabeeCarpenterMarsden2024,durmus2023convergence}.

Constructing orbits while checking for U-turns in a reversible way leads to a quite involved architecture.
Starting from the initial index orbit $I=\{0\}$ corresponding to the trivial orbit only containing the initial point in phase space, the orbit construction iteratively doubles the index orbit in either direction and checks for U-turns at each iteration.
In particular, reversibility requires not only to check the extended orbit for a U-turn but also the extension and all orbits obtainable from it by repeated halving \cite{BouRabeeCarpenterMarsden2024}.
This yields the following sub-U-turn property.

Given an index orbit $I$ of length $|I|$ being a power of $2$, define $\mathfrak I(I)$ as the collection of index orbits obtainable from $I$ by repeated halving, i.e.,
\[
\mathfrak{I}(I)\ =\ \bigr\{ I_{j,m}\ :\ j \in \{ 0, \dots, \log_2|I| \}, \, m \in \{ 1, \dots, 2^j\}  \bigr\}
\]
where, for all $j\in\{0,\dots,\log_2|I|\}$,  \( I_{j,1},\dots, I_{j,2^j} \) are the unique index orbits of length $|I|2^{-j}$ such that
\[ I\ =\ I_{j,1}\ \cup\ \cdots\ \cup\ I_{j,2^j}\;. \]
An index orbit $I$ is said to have the sub-U-turn property if any index orbit in $\mathfrak I(I)$ has the U-turn property.

We can now describe the orbit construction of NUTS in detail:
Given $(x,v)\in\mathbb R^{2d}$ and a maximal index orbit length of $2^{k_{\max}}$ where $k_{\max} \in \mathbb{N}$, NUTS constructs an orbit $\{\varphi_{hi}(x,v)\}_{i\in I}$ by proceeding iteratively as follows, starting with $I_0=\{0\}$: 
\begin{itemize}
\item For the current orbit $I_j$, draw an extension $I'$ uniformly from $\{I_j-|I_j|,\,I_j+|I_j|\}$.  
\item If  $I'$ has the sub-U-turn property, stop the procedure and select $I_j$ as final orbit.  
\item If $I'$ does not have the sub-U-turn property, extend the orbit by setting $I_{j+1}=I_j\cup I'$.  If $I_{j+1}$ satisfies the U-turn property or  $|I_{j+1}|=2^{k_{\max}}$, stop the procedure and select $I_{j+1}$ as final orbit. 
\item Otherwise, iterate with $I_{j+1}$ as the current orbit. 
\end{itemize}
This generates a sample $I$ from a probability distribution $\mathcal{O}_{x,v}$ over the collection of index orbits.
We say NUTS selects an orbit from this distribution.

\subsection{The Transitions of the No-U-turn Sampler}

Given the current state $x\in\mathbb R^d$, a transition of NUTS proceeds as follows:
Similar to other HMC methods with full velocity randomization, see \textsection\ref{sec:HMC}, NUTS first draws an initial velocity $v\sim\gamma_d$.
The integration time $T\sim\tau_{x,v}$ then results from a two step procedure.
First, an orbit is selected via the U-turn based orbit construction described above.
Second, the next state of NUTS is selected from the positions within the orbit according to a Boltzmann-weighted categorical distribution.
The categorical distribution generalizes the Bernoulli distribution to arbitrary index sets.
Specifically, $\iota \sim\cat(a_i)_{i\in I}$ with summable weights $(a_i)_{i\in I}$ iff
\[ \mathbb P(\iota=i)\ \propto\ a_i \;. \]
A complete transition of NUTS is given in Algorithm~\ref{algo:NUTS}.
The state selection from the orbit is equivalently written as an index selection from the corresponding index orbit.
The method has two user-tuned hyperparameters: the step-size $h>0$ and the maximum number of orbit doublings $k_{\max} \in \mathbb{N}$ which limits the index orbit length to $2^{k_{\max}}$.

\begin{algorithm}[ht] 
\caption{The No-U-turn Sampler $\ X\sim\pi_{\NUTS}(x,\cdot)$}\label{algo:NUTS}
1. Velocity randomization:  $v\sim\gamma_d$.\\
2. Orbit selection:  $I\sim\mathcal{O}_{x,v}$.\\
3. Index selection:  $\iota\sim\cat\bigr(e^{-(H\circ\varphi_{hi}-H)(x,v)}\bigr)_{i\in I}$. \\ 
4. Output: $X=\proj_1^d\varphi_{h\iota}(x,v)$.
\end{algorithm}

Note that this definition comprises two variants of NUTS---one using leapfrog flow and one using Hamiltonian flow.
The former is the implementable method used in practice while the latter is introduced to simplify the theoretical study of NUTS.
The main simplification occurs in the Boltzmann-weighted index selection, which removes leapfrog discretization error, comparable to a Metropolis filter.
As Hamiltonian flow preserves the Hamiltonian, this selection then becomes uniform.

Observe that NUTS using Hamiltonian flow constructs orbits iterating through the same index orbits $I\subset\mathbb Z$ as NUTS using leapfrog flow with step-size $h>0$.
Both variants therefore share the same discretization of physical integration time to $h\mathbb Z$.
In particular, $h$ persists as hyper-parameter in NUTS using Hamiltonian flow.
This ensures the transitions to be reversible.
A variant of NUTS that constructs orbits in a more continuous way has not yet been described.

\subsection{Integration Time in the No-U-turn Sampler}\label{sec:integration time}

With integration time distribution $\mathrm{Law}(h\iota)$, the No-U-turn Sampler fits our definition of an HMC method with full velocity randomization.
For NUTS using Hamiltonian flow, $\iota\sim\Unif(I)$ so that the integration times are comparable to the physical time orbit length $h(|I|-1)$.
We therefore call an orbit length critical if it is of the same order as the critical integration times.
For NUTS using leapfrog flow, the integration time $T$ additionally depends on the discretization errors $H\circ\Phi_h^i-H$ along the orbit.
In particular, excessively growing errors yield decaying categorical weights making parts of the orbit unavailable to index selection.
While this wastes computational resources, more importantly, it could hinder accelerated convergence even if NUTS selects orbits of critical length.
It is hence crucial to control energy errors along the selected orbits by either using a sufficiently small step size $h$ or by locally adapting it, see \cite{BouRabeeCarpenterMarsden2024,BouRabeeCarpenterKleppeMarsden2024,bourabee2025withinorbitadaptiveleapfrognouturn}.
Then, in both variants of NUTS, the integration time is comparable to the physical time orbit length and criticality of integration times corresponds to criticality of physical time orbit lengths.

\section{Concentration of the U-turn Property and Orbit Selection}

As discussed above, accelerated convergence of Randomized HMC hinges on critical integration times, which in NUTS correspond to critical orbit lengths.
Therefore, to describe acceleration in NUTS, it is crucial to understand the U-turn based orbit selection.

In this section, we establish a concentration inequality for the U-turn property in arbitrary Gaussian distributions and derive from it a precise characterization of the orbit selection, see \textsection\ref{sec:conc} and \textsection\ref{sec:orbit}.
Subsequently, we discuss two important examples in detail: Isotropic Gaussians in \textsection\ref{sec:iso} and two-scale Gaussians in \textsection\ref{sec:2S}.
Under natural assumptions, in the former, we discover NUTS to select critical orbits.
In the latter, we reveal two phases of two-scale Gaussians: One in which NUTS selects orbits sufficiently long for acceleration for all step sizes, while in the other being limited to short orbits not sufficient for acceleration for certain step sizes.
Finally, in \textsection\ref{sec:harmonicchain}, we briefly discuss an example illustrating the limitations of the concentration approach.

\subsection{Concentration of the U-turn Property}\label{sec:conc}

The following definition describes the U-turn diagnostic determining the U-turn property.

\begin{definition}\label{def:f}
For $h>0$, $t_-,t_+\in h\mathbb Z$, and $(x,v)\in\mathbb R^{2d}$ define
\begin{equation}\label{eq:f_concentration}
    f(x,v,t_-,t_+)\ =\ \min\bigr(v_+\cdot(x_+-x_-),\,v_-\cdot(x_+-x_-)\bigr)
\end{equation}
where $(x_-,v_-)=\varphi_{t_-}(x,v)$ and $(x_+,v_+)=\varphi_{t_+}(x,v)$.
In case of Hamiltonian flow $\varphi=\phi$, the definiition extends to all $t_-,t_+\in\mathbb R$.
\end{definition}

As defined in \textsection\ref{sec:u-turn}, an orbit $\{\varphi_{hi}(x,v)\}_{i\in I}$ has the U-turn property if and only if
\begin{equation*}
  f(x,\,v,\,h\min I,\,h\max I)  \ <\ 0\;.
\end{equation*}

Denote by $\gamma^\C$ the centered Gaussian measure on $\mathbb R^d$ with covariance matrix $\C$.
Without loss of generality, we can assume $\C$ to be diagonal.
In the following, we define scalar functions of diagonal matrices to be the diagonal matrices obtained by applying the function to each diagonal entry, see \eqref{eq:scalarondiag}.
The following theorem states that the U-turn property for Hamiltonian orbits concentrates in the sense that the diagnostic $f$ satisfies a Bernstein-type concentration inequality.
An analogous statement holds for leapfrog orbits.

\begin{theorem}\label{thm:U-turn_concentration}
Let $\mu=\gamma^\C$ and $(x,v)\sim\mu\otimes\gamma_d$.
Further, let $t_-,t_+\in h\mathbb Z$ and $f$ be as defined in \eqref{eq:f_concentration} using Hamiltonian flow.
Then, there exists an absolute constant $c>0$ such that for all $r\geq0$,
\[\begin{aligned}[t]
    &\mathbb P\left(\Bigr|f(x,v,t_-,t_+)-\tr\bigr(\sin\bigr(\C^{-1/2}(t_+-t_-)\bigr)\C^{1/2}\bigr)\Bigr|\geq r\right) \\
    &\qquad\leq\ 4\,\exp\left(-c\,\min\left(\frac{r^2}{\tr(\C)},\,\frac{r}{\|\C^{1/2}\|}\right)\right)\;. 
\end{aligned}\]
\end{theorem}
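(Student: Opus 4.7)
My plan is to recognise both arguments of the $\min$ defining $f$ as Gaussian quadratic forms and then apply the Hanson--Wright inequality. Since $\mu=\gamma^\C$, the Hamiltonian ODE reduces to the linear oscillator $\ddot X_t = -\C^{-1} X_t$. Taking $\C$ diagonal without loss of generality, so that all matrix trigonometric functions of $\C^{-1/2}$ commute with one another and with $\C$, Hamiltonian flow admits the closed form
\begin{align*}
X_t &= \cos(t\C^{-1/2})\,x + \C^{1/2}\sin(t\C^{-1/2})\,v, \\
V_t &= -\C^{-1/2}\sin(t\C^{-1/2})\,x + \cos(t\C^{-1/2})\,v.
\end{align*}
After the substitution $\tilde x = \C^{-1/2} x$, the pair $\xi = (\tilde x, v)$ is a standard Gaussian on $\R^{2d}$, and both $v_+\cdot(x_+-x_-)$ and $v_-\cdot(x_+-x_-)$ assume the form $\xi^\top M_\pm \xi$ for explicit symmetric matrices $M_\pm$.

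First I would read off the expectations. Only the $\tilde x_i^2$ and $v_i^2$ coefficients contribute, and the identity $\sin a\cos b - \cos a\sin b = \sin(a-b)$ collapses both $\Ex[v_+\cdot(x_+-x_-)]$ and $\Ex[v_-\cdot(x_+-x_-)]$ to the common value $\tr\bigl(\C^{1/2}\sin(\C^{-1/2}(t_+-t_-))\bigr)$, which is precisely the centring appearing in the statement. Next I would bound the Hilbert--Schmidt and operator norms of $M_\pm$. Because $\C$ and every trigonometric polynomial in $\C^{-1/2}$ share an eigenbasis, after symmetrising the cross term $\tilde x_i v_i$ the matrix $M_\pm$ splits into $d$ independent $2\times 2$ blocks, the $i$-th scaled by $\C_{ii}^{1/2}$ with coefficients bounded in absolute value by a numerical constant (via $|\cos|,|\sin|\le 1$). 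This immediately yields
\[
\|M_\pm\|\ \lesssim\ \|\C^{1/2}\|,\qquad \|M_\pm\|_F^2\ \lesssim\ \tr(\C),
\]
and the Hanson--Wright inequality supplies, for each of $v_+\cdot(x_+-x_-)$ and $v_-\cdot(x_+-x_-)$, a Bernstein-type tail with exactly the two regimes in the theorem.

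Finally I would convert these two tails into a tail for their minimum. With $A=v_+\cdot(x_+-x_-)$, $B=v_-\cdot(x_+-x_-)$ and common mean $\mu^\ast$, the upper-tail event $\{f\ge\mu^\ast+r\}$ forces both $A$ and $B$ to exceed $\mu^\ast+r$, while $\{f\le\mu^\ast-r\}$ holds whenever at least one of $A,B$ falls that low; two applications of Hanson--Wright combined with a union bound therefore produce the factor $4$ in the stated inequality. The main technical obstacle I anticipate is the bookkeeping needed to isolate the $2\times 2$ block structure of $M_\pm$ after symmetrisation and to verify that the resulting norm bounds hold uniformly in $t_-,t_+$; once the block decomposition is exposed, the remainder is a direct application of a standard Gaussian chaos inequality. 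The analogous statement for leapfrog flow proceeds along identical lines, with $\cos$ and $\sin$ of $t\C^{-1/2}$ replaced by the matrix polynomials arising from iterating the leapfrog transfer matrix, which likewise commute with $\C$ and remain bounded.
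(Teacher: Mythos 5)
Your plan matches the paper's proof essentially step for step: explicit diagonal solution of the linear oscillator, whitening by $\tilde x=\C^{-1/2}x$ to expose each dot product as a Gaussian quadratic form, the trigonometric collapse of the centring to $\tr\bigl(\sin(\C^{-1/2}(t_+-t_-))\C^{1/2}\bigr)$, $2\times2$ block norm bounds $\|M_\pm\|\lesssim\|\C^{1/2}\|$ and $\|M_\pm\|_F^2\lesssim\tr(\C)$, Hanson--Wright, and a union bound for the minimum giving the factor $4$. No gaps; this is the same argument as the paper's.
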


As $\|\C^{1/2}\|\leq\tr(\C)^{1/2}$, the theorem asserts that for all $t_-,t_+\in h\mathbb Z$,
\begin{equation}\label{eq:f_deviation}
    f(x,v,t_-,t_+)\ =\ \tr\bigr(\sin\bigr(\C^{-1/2}(t_+-t_-)\bigr)\C^{1/2}\bigr)\ +\ O\bigr(\tr(\C)^{1/2}\bigr)
\end{equation}
with high probability for $(x,v)\sim\mu\otimes\gamma_d$, i.e., for virtually all typical points with respect to the equilibrium distribution in phase space.
Note that the first term on the right hand side, which we denote by $f_{\unif}$ and refer to as the uniform term, only depends on the physical time length $t_+-t_-=h(|I|-1)$ of the orbit $\{\phi_{hi}(x,v)\}_{i\in I}$, while local effects in $x,v$ are confined to the deviation term at most of order $\tr(\C)^{1/2}$.
If the uniform term dominates the deviation term, the U-turn property is predominately dictated by the physical time orbit length.

Let us prove the theorem.
The argument identifies the terms in $f$ as bilinear forms and subsequently employs the Hanson-Wright inequality, a Bernstein-type concentration inequality for bilinear forms, see \cite{Vershynin} for details.

\begin{proof}[Proof of \Cref{thm:U-turn_concentration}]
Suppose $(x,v)\sim\gamma^\C\otimes\gamma_d$ and let $t_-,t_+\in h\mathbb Z$, $(x_+,v_+)=\phi_{t_+}(x,v)$ and $(x_-,v_-)=\phi_{t_-}(x,v)$.
We show
\begin{align}\label{eq:firstdisplay}
    &\mathbb P\left(\Bigr|v_+\cdot(x_+-x_-)-\tr\bigr(\sin\bigr(\Lam^{1/2}(t_+-t_-)\bigr)\Lam^{-1/2}\bigr)\Bigr|\geq r\right)\nonumber\\
    &\qquad\leq\ 2\,\exp\left(-c\,\min\left(\frac{r^2}{\tr(\C)},\,\frac{r}{\|\C^{1/2}\|}\right)\right)
\end{align}
for all $r\geq0$ and an absolute constant $c>0$.
Together with an analogous bound for the second scalar product in $f$, we obtain the asserted inequality as follows:
\begin{align*}
&\mathbb P\left(\Bigr|\min\bigr(v_+\cdot(x_+-x_-),\,v_-\cdot(x_+-x_-)\bigr)-\tr\bigr(\sin\bigr(\C^{-1/2}(t_+-t_-)\bigr)\C^{1/2}\bigr)\Bigr|\geq r\right)\\
&\qquad\leq\ \mathbb P\left(\Bigr|v_+\cdot(x_+-x_-)-\tr\bigr(\sin\bigr(\C^{-1/2}(t_+-t_-)\bigr)\C^{1/2}\bigr)\Bigr|\geq r\right)\\
&\qquad\qquad +\ \mathbb P\left(\Bigr|v_-\cdot(x_+-x_-)-\tr\bigr(\sin\bigr(\C^{-1/2}(t_+-t_-)\bigr)\C^{1/2}\bigr)\Bigr|\geq r\right)\\
&\qquad\leq\ 4\,\exp\left(-c\,\min\left(\frac{r^2}{\tr(\C)},\,\frac{r}{\|\C^{1/2}\|}\right)\right)\;.
\end{align*}
Let $\Lam=\C^{-1}$ and assume
\[ \Lam\ =\ \diag(\lam_i)_{1\leq i\leq d}\quad\text{with $\lam_i>0$.} \]
Hamiltonian flow corresponding to the energy $H(x,v)=\frac12|\Lam^{1/2}x|^2+\frac12|v|^2$ takes the explicit form
\begin{equation}\label{eq:HF}
    \phi_t(x,v) =\, \Bigr(\cos(\Lam^{1/2}t)\,x + \sin(\Lam^{1/2}t)\Lam^{-1/2}\,v,\ -\sin(\Lam^{1/2}t)\Lam^{1/2}\,x + \cos(\Lam^{1/2}t)\,v\Bigr)
\end{equation}
where
\begin{equation}\label{eq:scalarondiag}
    \cos(\Lam^{1/2}t)\ =\ \diag\bigr(\cos(\lam_j^{1/2}t)\bigr)_{1\leq j\leq d}
\end{equation}
and $\sin(\Lam^{1/2}t)$ is defined analogously.
The first scalar product in the U-turn property can then be expressed as
\begin{equation}\label{eq:Uturn_quad_form}
    v_+\cdot(x_+-x_-)\ =\ \begin{pmatrix} \Lam^{1/2}x \\ v \end{pmatrix} \cdot A \begin{pmatrix} \Lam^{1/2}x \\ v \end{pmatrix}\quad\text{with}\quad A\ =\ \begin{pmatrix} A^{xx} & A^{xv} \\ A^{xv} & A^{vv} \end{pmatrix}
\end{equation}
where
\begin{align*}
A^{xx}\ &=\ -\sin(\Lam^{1/2}t_+)\bigr(\cos(\Lam^{1/2}t_+)-\cos(\Lam^{1/2}t_-)\bigr)\Lam^{-1/2}\;, \\
A^{xv}\ &=\  \frac12\Bigr(
\begin{aligned}[t]
&\cos(\Lam^{1/2}t_+)\bigr(\cos(\Lam^{1/2}t_+)-\cos(\Lam^{1/2}t_-)\bigr)\\
&-\sin(\Lam^{1/2}t_+)\bigr(\sin(\Lam^{1/2}t_+)-\sin(\Lam^{1/2}t_-)\bigr)\Bigr)\Lam^{-1/2}\;,
\end{aligned}\\
A^{vv}\ &=\ \cos(\Lam^{1/2}t_+)\bigr(\sin(\Lam^{1/2}t_+)-\sin(\Lam^{1/2}t_-)\bigr)\Lam^{-1/2}\;.
\end{align*}
Note that if $(x,v)\sim\gamma^\C\otimes\gamma_d$, then $(\Lam^{1/2}x,v)\sim\gamma_{2d}$.
The Hanson-Wright inequality \cite{Vershynin} asserts
\begin{equation}\label{eq:HW}
\mathbb P\Bigr(\Bigr|v_+\cdot(x_+-x_-)-\mathbb E\bigr(v_+\cdot(x_+-x_-)\bigr)\Bigr|\geq r\Bigr)\ \leq\ 2\,\exp\Bigr(-c'\min\bigr(r^2\|A\|_F^{-2},\,r\|A\|^{-1}\bigr)\Bigr)
\end{equation}
for all $r\geq0$ and an absolute constant $c'>0$.
As
\begin{equation}\label{eq:Uturnmean}
    \mathbb E\bigr(v_+\cdot(x_+-x_-)\bigr)\ =\ \tr\bigr(\sin\bigr(\Lam^{1/2}(t_+-t_-)\bigr)\Lam^{-1/2}\bigr)\ =\ \tr\bigr(\sin\bigr(\C^{-1/2}(t_+-t_-)\bigr)\C^{1/2}\bigr)\;,
\end{equation}
Equation \eqref{eq:firstdisplay} results from bounding $\|A\|_F^2$ and $\|A\|$.
Note that $A$ is a permutation of
\[ \begin{pmatrix} M_1 & & \\  & \ddots & \\ & & M_d\end{pmatrix}\quad\text{where}\quad M_j\ =\ \begin{pmatrix} A^{xx}_{jj} & A^{xv}_{jj} \\ A^{xv}_{jj} & A^{vv}_{jj} \end{pmatrix}\;. \]
Computing the eigenvalues of $M_j$ yields
\begin{align*}
&\|M_j\|\ =\ \frac12\max\nolimits_\pm\Bigr|\sin\bigr(\lam_j^{1/2}(t_+-t_-)\bigr)\pm2\sin\Bigr(\frac12\lam_j^{1/2}(t_+-t_-)\Bigr)\Bigr|\lam_j^{-1/2}\ \leq\ \frac{3\sqrt 3}{4}\lam_j^{-1/2}\;, \\
&\|M_j\|_F^2\ =\ \sin^2\Bigr(\frac12\lam_j^{1/2}(t_+-t_-)\Bigr)\Bigr(\cos\bigr(\lam_j^{1/2}(t_+-t_-)\bigr)+3\Bigr)\lam_j^{-1}\ \leq\ 2\lam_j^{-1}\;.
\end{align*}
Therefore,
\begin{align*}
&\|A\|\ =\ \max\nolimits_j\|M_j\|\ \leq \frac{3\sqrt3}4\max\nolimits_j\lam_j^{-1/2}\ =\ \frac{3\sqrt3}4\|\Lam^{-1/2}\|\ =\ \frac{3\sqrt3}4\|\C^{1/2}\| \quad\text{and}\\
&\|A\|_F^2\ =\ \sum_j\|M_j\|_F^2\ \leq\ 2\sum_j\lam_j^{-1}\ =\ 2\tr(\Lam^{-1})\ =\ 2\tr(\C)\;,
\end{align*}
where we used that $\Lam^{-1}=\C$.
Inserting this together with \eqref{eq:Uturnmean} into \eqref{eq:HW} shows \eqref{eq:firstdisplay}, finishing the proof.
\end{proof}

\subsection{Uniformity of the Orbit Selection}\label{sec:orbit}

The next proposition captures the implications of U-turn property concentration on the orbit selection in NUTS.

For $h>0$ and $k_{\max}\in\mathbb N$, denote the set of physical time orbit lengths that may appear in the orbit selection by
\begin{equation}\label{eq:frakT}
    \mathfrak T\ =\ \bigr\{h(2^k-1)\ :\ k\in\mathbb N,\,k\leq k_{\max}\bigr\}\;.
\end{equation}
Let $\mathfrak I_{\max}$ denote the collection of all index orbits that may be checked for U-turns in the orbit selection.
For $t\in\mathfrak T$, let $\mathfrak I_0(t)$ denote the collection of index orbits $I$ such that $h(|I|-1)=t$ and $0\in I$.
In other words, the index orbits in $\mathfrak I_0(t)$ correspond to orbits of physical time length $t$ that include the initial point $(x,v)$.

\begin{proposition}\label{prop:unif_orbit}
Let $h>0$, $k_{\max}\in\mathbb N$, and $\mathfrak T$ as in \eqref{eq:frakT}.
Let $(x,v)\in\mathbb R^{2d}$ and $f$ as defined in \eqref{eq:f_concentration}.
Assume there exists $\delta>0$ and $f_{\unif}:[0,\infty)\to\mathbb R$ with $f_{\unif}(0)=0$ such that for all $t_-,t_+\in h\mathbb Z$ for which $t_-/h$ and $t_+/h$ correspond to the endpoints of an index orbit in $\mathfrak I_{\max}$, it holds that
\begin{equation}\label{eq:prop_deviation}
    \bigr|f(x,v,t_-,t_+)-f_{\unif}(t_+-t_-)\bigr|\ \leq\ \delta\;.
\end{equation}
Further suppose
\begin{equation}\label{eq:prop_T}
    \mathfrak T\ \cap\ \{-\delta\leq f_{\unif}<\delta\}\ =\ \emptyset\;.
\end{equation}
Then, the orbit selection of NUTS both using leapfrog and Hamiltonian flow simplifies to
\[ \mathcal{O}_{x,v}\ =\ \Unif(\mathfrak I_0(t_\ast))\quad\text{where}\quad t_\ast\ =\ \inf\bigr\{t\in\mathfrak T\ :\ f_{\unif}(t)<0\bigr\}\ \wedge\ \max\mathfrak T\;. \]
\end{proposition}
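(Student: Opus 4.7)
The plan is to show that, under hypotheses \eqref{eq:prop_deviation} and \eqref{eq:prop_T}, the U-turn property of every index orbit queried by the construction depends only on its physical time length, and then to trace the algorithm doubling by doubling. The reduction is immediate: for any $I \in \mathfrak{I}_{\max}$ of length $t = h(|I|-1) \in \mathfrak{T}$, hypothesis \eqref{eq:prop_deviation} places $f(x,v,h\min I, h\max I)$ within $\delta$ of $f_{\unif}(t)$, while \eqref{eq:prop_T} rules out $f_{\unif}(t) \in [-\delta,\delta)$. Hence such an $I$ has the U-turn property iff $f_{\unif}(t) < 0$, equivalently iff $t \geq t_\ast$. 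Halvings of length $0$ (single points) trivially fail the U-turn condition since then $x_+ = x_-$.

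Writing $t_\ast = h(2^{j_\ast} - 1)$ with $j_\ast \in \{1,\dots,k_{\max}\}$, I would next verify by iteration on $j$ that the algorithm reaches $I_{j_\ast}$ and stops there. At iteration $j < j_\ast$, the proposed extension $I'$ has length $h(2^j - 1) < t_\ast$, and every halving of $I'$ has a strictly smaller length; by the reduction above none of these carries a U-turn, so the sub-U-turn test on $I'$ does not fire. The merged orbit $I_{j+1}$ has length $h(2^{j+1} - 1)$, so if $j+1 < j_\ast$ the U-turn test on $I_{j+1}$ also fails. At $j = j_\ast - 1$ the merged orbit $I_{j_\ast}$ has length $t_\ast$, and either the U-turn test (when $f_{\unif}(t_\ast) < -\delta$) or the length cap $|I_{j_\ast}| = 2^{k_{\max}}$ (when $t_\ast = \max \mathfrak{T}$) fires, so the algorithm returns $I_{j_\ast}$---an orbit of physical length $t_\ast$ containing $0$. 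The principal bookkeeping obstacle lies in this step: certifying across all iterations that neither test fires prematurely and that every queried index orbit indeed lies in $\mathfrak{I}_{\max}$ so that \eqref{eq:prop_deviation} is applicable throughout.

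The uniformity statement then follows because the only randomness in the construction consists of the $j_\ast$ independent uniform direction choices $\epsilon_0,\dots,\epsilon_{j_\ast-1} \in \{-,+\}$ controlling whether each doubling extends left or right. Induction on $k$ gives
\[
\min I_{j_\ast} \;=\; -\sum_{k=0}^{j_\ast-1} 2^k \ind\{\epsilon_k = -\}\;,\qquad \max I_{j_\ast} \;=\; \sum_{k=0}^{j_\ast-1} 2^k \ind\{\epsilon_k = +\}\;.
\]
The map $(\epsilon_0,\dots,\epsilon_{j_\ast-1}) \mapsto I_{j_\ast}$ is therefore a bijection between $\{-,+\}^{j_\ast}$ and $\mathfrak{I}_0(t_\ast)$---both of cardinality $2^{j_\ast}$---and pushing the product uniform measure forward yields $\mathcal{O}_{x,v} = \Unif(\mathfrak{I}_0(t_\ast))$ as claimed, uniformly in the two variants of NUTS since the orbit selection is identical for Hamiltonian and leapfrog flow.
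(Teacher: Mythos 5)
Your proof is correct and follows essentially the same route as the paper's: reduce the U-turn property of each queried orbit to a deterministic function of its physical time length via \eqref{eq:prop_deviation} and \eqref{eq:prop_T}, trace the doubling procedure to show it terminates exactly at length $t_\ast$, and count the $2^{j_\ast}$ uniform direction choices to obtain $\Unif(\mathfrak I_0(t_\ast))$. Two minor wrinkles are worth flagging but do not affect the argument: the claim ``$f_{\unif}(t)<0$ iff $t\geq t_\ast$'' is only the needed implication on $\mathfrak T\cap[0,t_\ast]$ (for $t>t_\ast$ the sign of $f_{\unif}$ may revert, and if $t_\ast=\max\mathfrak T$ with $f_{\unif}(t_\ast)\geq\delta$ the ``equivalence'' fails at $t_\ast$ itself), and $\mathfrak I(I')$ contains $I'$, so not every halving is of \emph{strictly} smaller length—what matters is that all these lengths are $\le h(2^j-1)<t_\ast$ and hence lie in $\{f_{\unif}\geq\delta\}$.
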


In case of pronounced concentration of the U-turn property, \eqref{eq:prop_deviation} holds with high probability for a small $\delta$ relative to the scale of $f_{\unif}$.
In the examples discussed below, this limits the size of $\{-\delta\leq f_{\unif}<\delta\}$ and therewith the restriction on the step size $h$ imposed in \eqref{eq:prop_T}.

According to the proposition, the orbit selection in NUTS then simplifies to a uniform selection from the index orbits of fixed physical time length $t_\ast$ containing $0$.
Importantly, $t_\ast$ is deterministic, only depending on the target's covariance $\C$ through $f_{\unif}$ and the user-tuned parameters $h$ and $k_{\max}$ through $\mathfrak T$.
This allows us to identify the orbit length selected by NUTS and in particular under what conditions NUTS selects orbits of critical length.

Furthermore, the asserted uniformity in $x$ and $v$ of the orbit selection will play a crucial role in the mixing analysis of NUTS via coupling techniques, see \textsection\ref{sec:mixing}.

\begin{proof}[Proof of \Cref{prop:unif_orbit}]
The argument is illustrated in \Cref{fig:sinm}.
Given an initial point $(x,v)\in\mathbb R^{2d}$ in phase space, \eqref{eq:prop_deviation} confines $f(x,v,t_-,t_+)$ to a $\delta$-neighborhood around $f_{\unif}(t_+-t_-)$ for all $t_-,t_+$ corresponding to index orbits of orbits that may be checked for U-turns in the orbit selection.
In particular, for $t_+-t_-$ outside of $\{-\delta\leq f_{\unif}<\delta\}$, the sign of $f$ is guaranteed to coincide with the sign of $f_{\unif}$.
Assumption \eqref{eq:prop_T}, which prohibits the physical time orbit lengths $\mathfrak T$ that may be checked during orbit selection to fall within $\{-\delta\leq f_{\unif}<\delta\}$, therefore ensures the sign of all instances of $f$ relevant to orbit selection to coincide with the sign of $f_{\unif}$ at the corresponding physical time orbit lengths.
As the sign of $f$ determines the U-turn property, all orbits checked for U-turns during orbit selection have the U-turn property if and only if their physical time orbit length falls within $\{f_{\unif}<0\}$.

The orbit construction doubles the orbits until either the extension satisfies the sub-U-turn property, the extended orbit satisfies the U-turn property, or the maximum number of doublings is reached.
Under the above considerations, the first option does not appear since all sub-orbits checked for U-turns in the sub-U-turn property are of a physical time length already checked for a U-turn in an earlier iteration of the doubling procedure.
Therefore, the recursion terminates when the doubled orbit satisfies the U-turn property, i.e., is of physical time length in $\{f_{\unif}<0\}$, or the maximum number of doublings is reached, in which case the physical time orbit length is $\max\mathfrak T$.
Hence, the orbit selection produces an orbit of physical time length $t_\ast$.
The fact that the doubling procedure extends the orbit to either side with equal probability then yields any orbit in $\mathfrak I_0(t_\ast)$ with uniform probability.
\end{proof}

\begin{figure}[t]
\centering
\begin{tikzpicture}[scale=2]
\clip (-0.2,-1.2) rectangle ({2*pi+0.3},1.2);

\draw[black, line width=1pt, ->] (0,0) -- ({2*pi+0.1},0) node[anchor=north west, pos=1]{$t$};
\draw[black, line width=1pt, ->] (0,-1.1) -- (0,1.1);
\draw[black, line width=1pt] (-0.05,0) -- (0,0) node[anchor=east, pos=0]{${\scriptstyle 0}$};

\draw [gray, line width=1pt] plot[variable=\t,domain=0:2*pi,smooth,thick] ({\t},{sin(\t r)});
\draw [black, line width=1pt, dashed] plot[variable=\t,domain=0:2*pi,smooth,thick] ({\t},{sin(\t r)+0.08});
\draw [black, line width=1pt, dashed] plot[variable=\t,domain=0:2*pi,smooth,thick] ({\t},{sin(\t r)-0.08});

\node (name) at ({3*pi/4+0.1},{sin(3*pi/4 r)+0.2}) [anchor=west]{$f_{\unif}(t)\pm\delta$};

\filldraw[purple] (0.25,0) circle (1.2pt);
\filldraw[purple] (0.5,0) circle (1.2pt);
\filldraw[purple] (1,0) circle (1.2pt);
\filldraw[purple] (2,0) circle (1.2pt);
\filldraw[purple] (4,0) circle (1.2pt) node[anchor=north west]{$t_\ast$};
\end{tikzpicture}
\caption{\textit{Illustration of \Cref{prop:unif_orbit} and its proof.  The red dots depict $\mathfrak T$.  The shown $f_{\unif}=\sin(m^{1/2}t)\,m^{-1/2}d$ arises in isotropic Gaussian distributions, see \eqref{eq:f_conc_iso}.  Assuming $\mathfrak T$ not to intersect the $\delta$-neighborhood around the roots of $f_{\unif}$ and a maximal orbit length $\max\mathfrak T=\Omega(m^{-1/2})$ permitting criticality, NUTS selects orbits of critical physical time length $t_\ast=\Theta(m^{-1/2})$ with high probability.}}
\label{fig:sinm}
\end{figure}
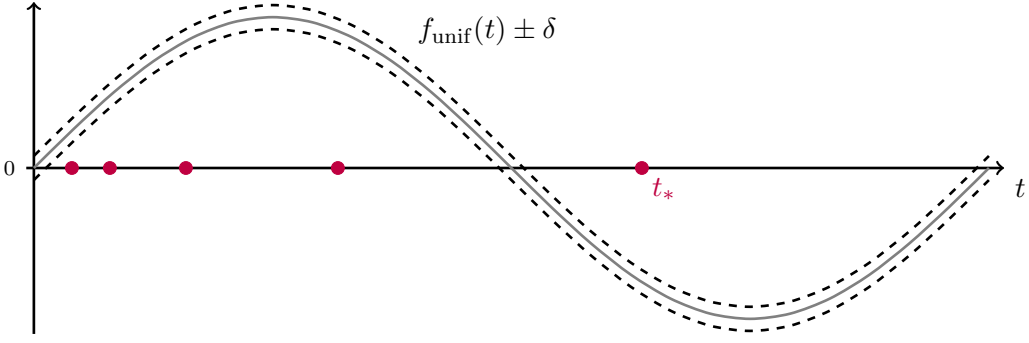

In the remainder of this section, we discuss the concentration of the U-turn property and its implications on orbit selection in specific examples, including isotropic and two-scale Gaussian distributions.

\subsection{Isotropic Gaussian Distributions}\label{sec:iso}

Consider the isotropic Gaussian distribution with covariance matrix $\C=m^{-1}I_d$ for $m>0$.
For notational convenience, we write the distribution as pushforward $m^{-1/2}\#\gamma_d$ of the canonical Gaussian by $x\mapsto m^{-1/2}x$.
Then $m^{-1/2}x\sim m^{-1/2}\#\gamma_d$ for $x\sim\gamma_d$.
Via \eqref{eq:f_deviation}, \Cref{thm:U-turn_concentration} asserts that for $(x,v)\sim (m^{-1/2}\#\gamma_d)\otimes\gamma_d$,
\begin{equation}\label{eq:f_conc_iso}
    f(x,v,t_-,t_+)\ =\ \sin\bigr(m^{1/2}(t_+-t_-)\bigr)\,m^{-1/2}d\ +\ O(m^{-1/2}d^{1/2})
\end{equation}
for all $t_-,t_+\in h\mathbb Z$ with high probability.
In \Cref{lem:deltam} below, we strengthen this result to hold with high probability for all $t_-,t_+$.
For sufficiently large dimension, $f$ tightly concentrates around $f_{\unif}(t)=\sin(m^{1/2}t)\,m^{-1/2}d$, see Figures \ref{fig:sinm} and \ref{fig:Uturn_conc_stdGaussian}.

\begin{figure}
\centering
\includegraphics[scale=.75]{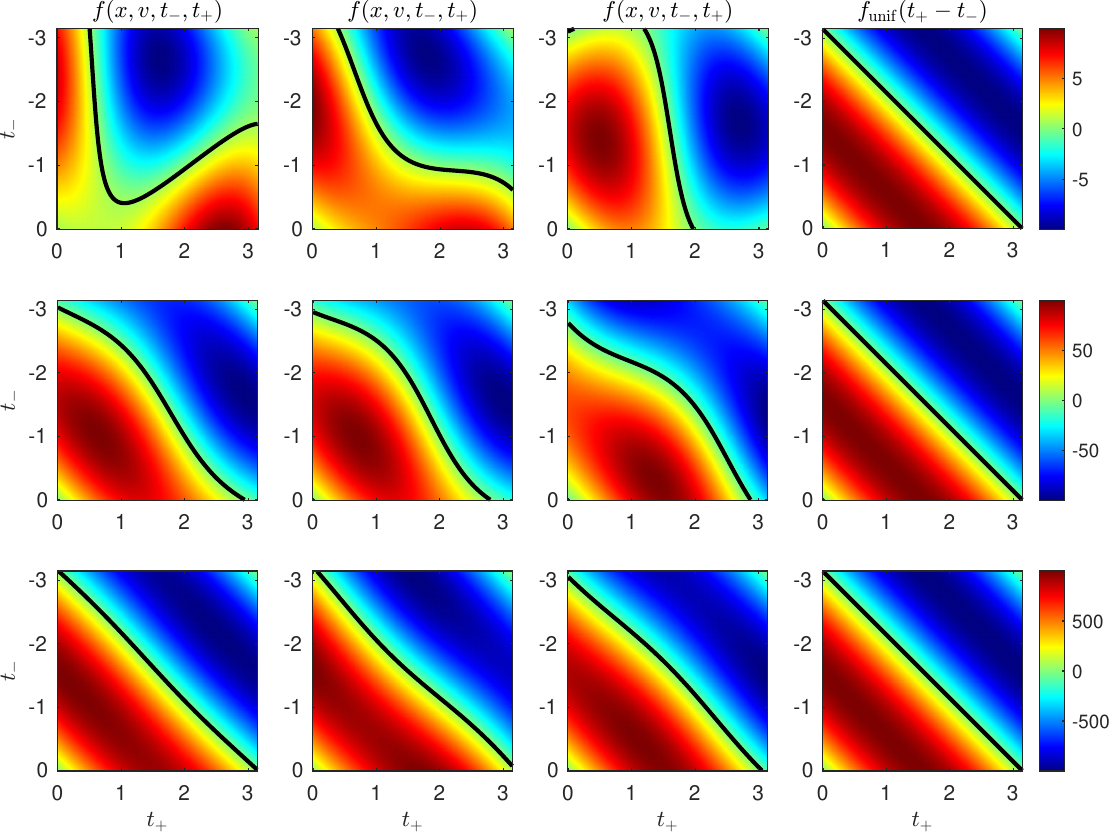}
\caption{\textit{Increasingly tight concentration of the U-turn diagnostic $f$ around $f_{\unif}$ in the canonical Gaussian distribution $\gamma_d$ in dimensions $d=10$ (first row), $d=100$ (second row), and $d=1000$ (third row) according to \eqref{eq:f_conc_iso}.  Respectively, the first three columns depict $f(x,v,t_-,t_+)$ for three independent draws $(x,v)\sim\gamma_{2d}$.  The last column displays the corresponding $f_{\unif}$ for comparison, cf. \Cref{fig:sinm}.  Black lines illustrate zero level sets.  While local effects are pronounced in low dimension, they become neglectable as $d$ increases.}}
\label{fig:Uturn_conc_stdGaussian}
\end{figure}

This concentration results from the geometry of isotropic Gaussian distributions in high dimension.
Specifically, the canonical Gaussian concentrates in spherical shells of the form
\begin{equation}\label{eq:Dalpha}
    D_\alpha^d\ =\ \bigr\{x\in\mathbb R^d\ :\ \bigr||x|^2-d\bigr|\leq\alpha\bigr\}
\end{equation}
in the sense that
\[ \gamma(D_\alpha^d)\ \geq\ 1-2\,\exp\bigr(-\alpha^2/(8d)\bigr)\quad\text{for $\alpha\leq d$.} \]
Accordingly, the isotropic Gaussian $m^{-1/2}\#\gamma_d$ concentrates in the rescaled shell
\[ m^{-1/2}D_\alpha^d\ =\ \bigr\{m^{-1/2}x\ :\ x\in D_\alpha^d\bigr\} \]
since
\begin{equation}\label{eq:Dalphaest}
    (m^{-1/2}\#\gamma_d)(m^{-1/2}D_\alpha^d)\ =\ \gamma(D_\alpha^d)\ \geq\ 1-2\,\exp\bigr(-\alpha^2/(8d)\bigr)\quad\text{for $\alpha\leq d$.}
\end{equation}
This spherical geometry results in \eqref{eq:f_conc_iso} as explained in \Cref{fig:u-turn_iso}.

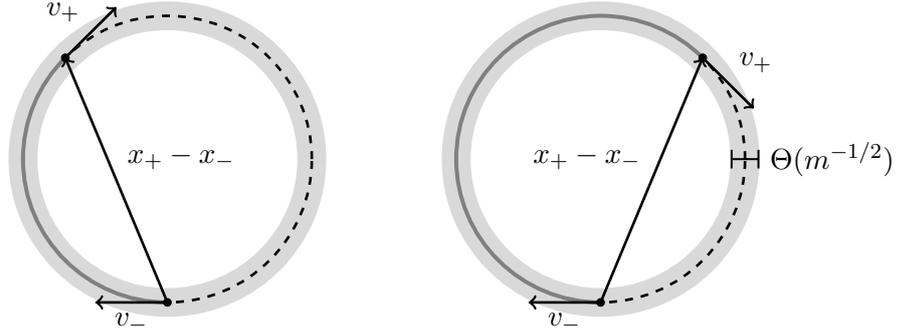
\begin{figure}
\centering
\begin{tikzpicture}[scale=.95]
\clip (-2.5,-0.5) rectangle (12.5,4.5);

\fill [gray!30,even odd rule] (2,2) circle[radius=1.8] circle[radius=2.2];
\fill [gray!30,even odd rule] (8,2) circle[radius=1.8] circle[radius=2.2];

\draw[black, line width=1pt, dashed] (2,2) circle (2);
\draw[black, line width=1pt, dashed] (7+1,2) circle (2);

\draw[gray, line width=1.5pt] (2,0) arc (270:135:2);
\draw[gray, line width=1.5pt] (7+1,0) arc (270:45:2);

\draw[black, line width=1pt, ->] (2,0) -- (1,0) node[anchor=north, pos=0.5]{$v_-$};
\draw[black, line width=1pt, ->] (7+1,0) -- (6+1,0) node[anchor=north, pos=0.5]{$v_-$};

\draw[black, line width=1pt, ->] ({2+2*cos(3*pi/4 r)},{2+2*sin(3*pi/4 r)}) -- ({2+2*cos(3*pi/4 r)+sqrt(2)/2},{2+2*sin(3*pi/4 r)+sqrt(2)/2}) node[anchor=south east, pos=0.5]{$v_+$};
\draw[black, line width=1pt, ->] ({7+2*cos(pi/4 r)+1},{2+2*sin(pi/4 r)}) -- ({7+2*cos(pi/4 r)+sqrt(2)/2+1},{2+2*sin(pi/4 r)-sqrt(2)/2}) node[anchor=south west, pos=0.5]{$v_+$};
    
\draw[black, line width=1pt, ->] (2,0) -- ({2+2*cos(3*pi/4 r)},{2+2*sin(3*pi/4 r)}) node[anchor=south west, pos=0.5]{$x_+-x_-$};
\draw[black, line width=1pt, ->] (7+1,0) -- ({7+2*cos(pi/4 r)+1},{2+2*sin(pi/4 r)}) node[anchor=south east, pos=0.5]{$x_+-x_-$};

\filldraw (2,0) circle (1.5pt);
\filldraw ({2+2*cos(3*pi/4 r)},{2+2*sin(3*pi/4 r)}) circle (1.5pt);
\filldraw (7+1,0) circle (1.5pt);
\filldraw ({7+2*cos(pi/4 r)+1},{2+2*sin(pi/4 r)}) circle (1.5pt);

\draw [|-|, line width=.75pt] (10-0.2,2) -- (10+0.2,2) node[right] {$\Theta(m^{-1/2})$};
\end{tikzpicture}
\caption{\textit{Virtually all mass of $m^{-1/2}\#\gamma_d$ concentrates in a spherical shell $m^{-1/2}D_\alpha^d$ (shaded gray) with $\alpha=\Theta(d^{1/2})$ around a centered sphere (dashed), see \eqref{eq:Dalphaest}.  With increasing dimension, the deviations of order $m^{-1/2}$ from the sphere become neglectable relative to its radius $(d/m)^{1/2}$.  Hamiltonian orbits obey this spherical geometry and hence increasingly resemble the $(2\pi m^{-1/2})$-periodic Hamiltonian orbits restricted to the sphere, whose U-turn diagnostic precisely coincides with $f_{\unif}$ resulting in \eqref{eq:f_conc_iso}.  In particular, orbits restricted to the sphere of physical time length in $m^{-1/2}[0,\pi]$ do not have the U-turn property (left) while orbits of length in $m^{-1/2}(\pi,2\pi)$ do (right).}}
\label{fig:u-turn_iso}
\end{figure}

We now strengthen \eqref{eq:f_conc_iso}.
Therefore, define
\begin{equation}\label{eq:E}
    E_{\alpha,r}^d\ =\ \Bigr\{v\in\mathbb R^d\ :\ \max\bigr(||v|^2-d|,\,\sup\nolimits_{x\in D_\alpha^d}|x\cdot v|\bigr)\ \leq\ r\Bigr\}\;.
\end{equation}
By \cite[Lemma~1]{BouRabeeOberdoersterNUTS1}, for $0\leq\alpha,r\leq d$, it holds that
\begin{equation}\label{eq:Eprob}
    \gamma_d(E_{\alpha,r}^d)\ \geq\ 1-4\,e^{-r^2/(8d)}\;.
\end{equation}
In particular, the product measure $(m^{-1/2}\#\gamma_d)\otimes\gamma_d$ on phase space concentrates within $(m^{-1/2}D_\alpha^d)\times E_{\alpha,r}^d$ with $\alpha,r=\Theta(d^{1/2})$.

\begin{lemma}\label{lem:deltam}
Consider $\mu=m^{-1/2}\#\gamma_d$ for $m>0$ and $d\in\mathbb N$.
Let $\alpha,r\leq d$ and for $\hbar\geq0$ set
\begin{equation}\label{eq:delta_general}
    \delta_{m,d,\hbar}(\alpha,r)\ =\ \bigr(5\max(\alpha,r)d^{-1/2}+\hbar^2md^{1/2}\bigr)m^{-1/2}d^{1/2}
\end{equation}
as well as
\begin{equation}\label{eq:funif_general}
    f_{\unif,\hbar}(t)\ =\ \sin\bigr(\beta_{\hbar^2m}m^{1/2}t\bigr)\,m^{-1/2}d
\end{equation}
with $\beta_{\mathsf{x}}=\mathsf{x}^{-1/2}\arccos(1-\mathsf{x}/2)$.
Further let $(x,v)\in (m^{-1/2}D_\alpha^d)\times E_{\alpha,r}^d$ and
\[ \Xi\ \in\ \Bigr\{v_+\cdot(x_+-x_-),\,v_-\cdot(x_+-x_-),\,f(x,v,t_-,t_+)\Bigr\} \]
with $(x_-,v_-)$, $(x_+,v_+)$ and $f$ as in \Cref{def:f} using
\begin{itemize}
\item[(i)] Hamiltonian flow, or
\item[(ii)] leapfrog flow with step-size $h>0$ such that $h^2m\leq1$.
\end{itemize}
Then, it holds
\begin{equation}\label{eq:min_m}
    \bigr|\Xi\ -\ f_{\unif,\hbar}(t_+-t_-)\bigr|\ \leq\ \delta_{m,d,\hbar}(\alpha,r)
\end{equation}
\begin{itemize}
\item[(i)] with $\hbar=0$ for all $t_-,t_+\in \mathbb R$ in case of Hamiltonian flow, and 
\item[(ii)] with $\hbar=h$ for all $t_-,t_+\in h\mathbb Z$ in case of leapfrog flow.
\end{itemize}
\end{lemma}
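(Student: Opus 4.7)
The plan is to exploit the exact linearity and closed form of Hamiltonian and leapfrog flow under the quadratic Hamiltonian $H(x,v)=\tfrac{m}{2}|x|^2+\tfrac{1}{2}|v|^2$. Throughout, set $y=m^{1/2}x$, so the hypotheses $x\in m^{-1/2}D_\alpha^d$ and $v\in E_{\alpha,r}^d$ translate via \eqref{eq:Dalpha}, \eqref{eq:E} into $\bigl||y|^2-d\bigr|\le\alpha$, $\bigl||v|^2-d\bigr|\le r$ and $|y\cdot v|\le r$. It suffices to bound $|v_+\cdot(x_+-x_-)-f_{\unif,\hbar}(t_+-t_-)|$: an analogous calculation handles $v_-\cdot(x_+-x_-)$, and the bound for the minimum $f$ then follows from the elementary inequality $|\min(a,b)-c|\le\max(|a-c|,|b-c|)$.

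For the Hamiltonian flow case $(\hbar=0)$, substituting $\Lam=mI_d$ into \eqref{eq:HF} and writing $c_\pm=\cos(m^{1/2}t_\pm)$, $s_\pm=\sin(m^{1/2}t_\pm)$, a direct expansion gives
\[
v_+\cdot(x_+-x_-)\ =\ m^{-1/2}\Bigl(-s_+(c_+-c_-)|y|^2+c_+(s_+-s_-)|v|^2+\bigl[c_+(c_+-c_-)-s_+(s_+-s_-)\bigr]y\cdot v\Bigr)\;.
\]
Splitting off the deterministic contributions via $|y|^2=d+(|y|^2-d)$, $|v|^2=d+(|v|^2-d)$ and using the identity $s_+c_--c_+s_-=\sin(m^{1/2}(t_+-t_-))$ identifies the main term as exactly $f_{\unif,0}(t_+-t_-)$. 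The residual is then a linear combination of $|y|^2-d$, $|v|^2-d$ and $y\cdot v$ whose trigonometric coefficients are uniformly bounded by $3/2$, $3/2$ and $2$ respectively (via product-to-sum formulas). Inserting the concentration estimates yields a residual at most $(3/2)\alpha m^{-1/2}+(7/2)r m^{-1/2}\le 5\max(\alpha,r)m^{-1/2}=\delta_{m,d,0}(\alpha,r)$.

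For the leapfrog case $(\hbar=h)$, the structural key is that one leapfrog step for this quadratic Hamiltonian is a symplectic linear map on $\R^{2d}$ which, after passing to the rescaled coordinates $(y,w):=(m^{1/2}x,\,v/\sqrt{1-h^2m/4})$, becomes a pure planar rotation with matrix
\[
\begin{pmatrix} 1-\mathsf{x}/2 & \sqrt{\mathsf{x}(1-\mathsf{x}/4)} \\ -\sqrt{\mathsf{x}(1-\mathsf{x}/4)} & 1-\mathsf{x}/2 \end{pmatrix}\qquad(\mathsf{x}:=h^2m\le 1)\;,
\]
thanks to the identity $(1-\mathsf{x}/2)^2+\mathsf{x}(1-\mathsf{x}/4)=1$. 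Its angle equals $\theta=\arccos(1-\mathsf{x}/2)=hm^{1/2}\beta_{h^2m}$ by definition of $\beta$, so $n$ leapfrog steps rotate the $(y,w)$ plane by $n\theta=tm^{1/2}\beta_{h^2m}$ when $t=nh$. Repeating the Hamiltonian-case expansion in the rotated frame with $\Omega_\pm:=m^{1/2}\beta_{h^2m}t_\pm$ and using the identity $v_+\cdot(x_+-x_-)=\sqrt{1-h^2m/4}\,m^{-1/2}\,w_+\cdot(y_+-y_-)$, the leading contribution becomes $\sqrt{1-h^2m/4}\,\sin(\Omega_+-\Omega_-)m^{-1/2}d$. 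Its discrepancy with $f_{\unif,h}(t_+-t_-)$ is controlled by $(1-\sqrt{1-h^2m/4})m^{-1/2}d\le\tfrac{1}{4}h^2m^{1/2}d$. The residual picks up further $O(h^2m^{1/2}d)$ corrections from $\bigl||w|^2-d\bigr|\le(r+dh^2m/4)/(1-h^2m/4)$ (where the $dh^2m/4$ shift is specific to the leapfrog case) and $|y\cdot w|\le r/\sqrt{1-h^2m/4}$. Summing all contributions yields the full bound $\delta_{m,d,h}(\alpha,r)$.

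\textbf{Main obstacle.} The principal subtlety lies in case (ii): identifying the unique velocity rescaling $v\mapsto v/\sqrt{1-h^2m/4}$ that diagonalises the leapfrog propagator into a pure rotation and thereby produces the shifted angular frequency $m^{1/2}\beta_{h^2m}$ underlying $f_{\unif,h}$. From there, the work reduces to tracking the various $O(h^2m)$ corrections (from the scalar prefactor $\sqrt{1-h^2m/4}$ and from the shift in $|w|^2$) and confirming each contributes at scale $h^2m^{1/2}d$. The Hamiltonian-flow case is essentially a careful application of trigonometric identities.
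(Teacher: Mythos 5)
Your proposal is correct and follows essentially the same line as the paper's proof: write out the explicit linear flow, expand the scalar product as a bilinear form, isolate the deterministic term via trigonometric identities, and bound the residual coefficients against the concentration estimates $||y|^2-d|\le\alpha$, $||v|^2-d|\le r$, $|y\cdot v|\le r$. The arithmetic leading to the constant $5$ in the Hamiltonian case ($\tfrac{3}{2}\alpha+\tfrac{7}{2}r\le 5\max(\alpha,r)$) and the $h^2m^{1/2}d$ correction term in the leapfrog case is the same. The only genuinely different element is expository: where the paper invokes the closed-form leapfrog propagator \eqref{eq:mLF} directly and proves the Hamiltonian case by the limit $h\to 0$, you rederive \eqref{eq:mLF} from scratch by identifying the velocity rescaling $w=v/\sqrt{1-h^2m/4}$ that turns one leapfrog step into a pure rotation by angle $\arccos(1-h^2m/2)=h m^{1/2}\beta_{h^2m}$, and you treat the Hamiltonian case first rather than as a limit. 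This rotation-matrix viewpoint is a clean derivation of the leapfrog closed form, but once it is in hand the resulting error decomposition (your four contributions) is a reparametrization of the paper's $\rn1$--$\rn4$: same trigonometric coefficient bounds $3/2$, $3/2$, $2$, same use of $r\le d$ and $h^2m\le1$ to absorb the $(1-h^2m/4)^{\pm1/2}$ distortions into an $O(h^2m)$ term, and the same final bound up to the constant in front of $h^2m^{1/2}d$ (which, tracked carefully, stays below $1$). Your reduction of the minimum via $|\min(a,b)-c|\le\max(|a-c|,|b-c|)$ is also the same step the paper takes implicitly.
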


Before proving the lemma, we discuss its implications on orbit selection according to \Cref{prop:unif_orbit}.
Since $(x,v)\sim(m^{-1/2}\#\gamma_d)\otimes\gamma_d$ is contained in $(m^{-1/2}D_\alpha^d)\times E_{\alpha,r}^d$ with $\alpha,r=\Theta(d^{1/2})$ with high probability, the lemma ensures \eqref{eq:prop_deviation} holds with $f_{\unif,\hbar}$ and $\delta_{m,d,\hbar}(\alpha,r)$ for all $t_-,t_+$.
Assume \eqref{eq:prop_T} holds.
The proposition then asserts the orbit selection in NUTS to be uniform from the orbits $\mathfrak I_0(t_\ast)$ of physical time length
\begin{equation}\label{eq:tastiso}
    t_\ast\ =\ \inf\bigr\{t\in\mathfrak T\ :\ f_{\unif,\hbar}(t)<0\bigr\}\ \wedge\ \max\mathfrak T\;.
\end{equation}
Further assuming the maximal orbit length $\max\mathfrak T=\Omega(m^{-1/2})$ to permit criticality, $t_\ast$ is guaranteed to be of order $m^{-1/2}$.
This can be seen in two steps:
\begin{enumerate}
\item On one hand, $t_\ast=\Omega(m^{-1/2})$ as both terms in \eqref{eq:tastiso} are $\Omega(m^{-1/2})$ since
\begin{align*}
    \inf\bigr\{t\in\mathfrak T\ :\ f_{\unif,\hbar}(t)<0\bigr\}\ &\geq\ \inf\bigr\{t\geq0\ :\ f_{\unif,\hbar}(t)<0\bigr\}\ =\ \pi\beta_{\hbar^2m}^{-1}m^{-1/2}\\
    &=\ \Omega(m^{-1/2})\;.
\end{align*}
\item On the other, $t_\ast=O(m^{-1/2})$ as by \eqref{eq:prop_T},
\[ t_\ast\ \leq\ \inf\bigr\{t\in\mathfrak T\ :\ f_{\unif,\hbar}(t)<0\bigr\}\ <\ 2\pi\beta_{\hbar^2m}^{-1}m^{-1/2}\ =\ O(m^{-1/2})\;. \]
\end{enumerate}
NUTS thus selects orbits of physical time length $\Theta(m^{-1/2})$, cf. \Cref{fig:sinm}.
As the resulting integration times precisely correspond to criticality in Randomized HMC, we expect NUTS to achieve the diffusive-to-ballistic speed-up in isotropic Gaussian distributions of sufficiently high dimension for the U-turn concentration to be pronounced, cf. \Cref{fig:Uturn_conc_stdGaussian}.
Rigorously, this can be shown via the coupling approach to mixing of NUTS presented in \textsection\ref{sec:mixing}.
Our observations are summarized in the following proposition.

\begin{proposition}
Consider $\mu=m^{-1/2}\#\gamma_d$ for $m>0$ and $d\in\mathbb N$.
Let $\alpha,r\leq d$, $\hbar\geq0$, and $f_{\unif,\hbar}$ and $\delta_{m,d,\hbar}$ as defined in \eqref{eq:delta_general} and \eqref{eq:funif_general}.
Assume \eqref{eq:prop_T} holds with $f_{\unif,\hbar}$ and $\delta_{m,d,\hbar}$
\begin{itemize}
\item[(i)] with $\hbar=0$ in case of NUTS using Hamiltonian flow, and
\item[(ii)] with $\hbar=h$ in case of NUTS using leapfrog flow with step-size $h>0$ such that $h^2m\leq1$.
\end{itemize}
If $\max\mathfrak T=\Omega(m^{-1/2})$, then for all $(x,v)\in (m^{-1/2}D_\alpha^d)\times E_{\alpha,r}^d$, the orbit selection of NUTS satisfies
\[ \mathcal{O}_{x,v}\ =\ \Unif(\mathfrak I_0(t_\ast))\quad\text{with $t_\ast=\Theta(m^{-1/2})$.} \]
\end{proposition}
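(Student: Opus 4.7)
My proof proposal would combine the pointwise concentration in Lemma~\ref{lem:deltam} with the uniform-orbit result of Proposition~\ref{prop:unif_orbit}, followed by a two-step order estimate for $t_\ast$.

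First, I fix $(x,v) \in (m^{-1/2}D_\alpha^d) \times E_{\alpha,r}^d$. Lemma~\ref{lem:deltam} applies verbatim in both flow settings and yields $|f(x,v,t_-,t_+) - f_{\unif,\hbar}(t_+ - t_-)| \leq \delta_{m,d,\hbar}(\alpha,r)$ for all $t_-, t_+$ in the relevant time lattice ($h\mathbb Z$ in case (ii), all of $\mathbb R$ in case (i)). This covers in particular every pair of endpoints of an index orbit in $\mathfrak I_{\max}$, so hypothesis \eqref{eq:prop_deviation} of Proposition~\ref{prop:unif_orbit} holds with $f_{\unif} = f_{\unif,\hbar}$ and $\delta = \delta_{m,d,\hbar}(\alpha,r)$. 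The assumed \eqref{eq:prop_T} then directly invokes Proposition~\ref{prop:unif_orbit} to give the uniform orbit selection $\mathcal O_{x,v} = \Unif(\mathfrak I_0(t_\ast))$, with $t_\ast = \inf\{t \in \mathfrak T : f_{\unif,\hbar}(t) < 0\} \wedge \max\mathfrak T$.

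Second, I would verify $t_\ast = \Theta(m^{-1/2})$. The map $\mathsf x \mapsto \beta_{\mathsf x} = \mathsf x^{-1/2}\arccos(1-\mathsf x/2)$ extends continuously to $\mathsf x = 0$ with $\beta_0 = 1$ and is bounded on $[0,1]$ (reaching $\pi/3$ at $\mathsf x = 1$), so the standing hypothesis $\hbar^2 m \leq 1$ gives $\beta_{\hbar^2 m} = \Theta(1)$. Hence the first positive zero of $f_{\unif,\hbar}$ sits at $\pi\beta_{\hbar^2 m}^{-1}m^{-1/2} = \Theta(m^{-1/2})$, and $f_{\unif,\hbar}$ is negative on the interval between this zero and $2\pi\beta_{\hbar^2 m}^{-1}m^{-1/2}$. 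For the lower bound on $t_\ast$, any $t \in \mathfrak T$ with $f_{\unif,\hbar}(t) < 0$ must exceed the first zero, and by assumption $\max\mathfrak T = \Omega(m^{-1/2})$, so $t_\ast = \Omega(m^{-1/2})$. For the upper bound, I would show that the first $T \in \mathfrak T$ with $f_{\unif,\hbar}(T) < 0$ sits below $2\pi\beta_{\hbar^2 m}^{-1}m^{-1/2}$: the near-doubling structure $\mathfrak T = \{h(2^k-1)\}_{k \leq k_{\max}}$ has consecutive ratios tending to $2$ from above, the first negative region has endpoint ratio exactly $2$, and \eqref{eq:prop_T} precludes the borderline configuration in which some $T_k$ hides within the $\delta$-neighborhood of the zero at $\pi\beta_{\hbar^2 m}^{-1}m^{-1/2}$---the only scenario that would permit the sequence to leapfrog over the first negative lobe. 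Taking the minimum with $\max\mathfrak T$ handles the complementary case $\max\mathfrak T < 2\pi\beta_{\hbar^2 m}^{-1}m^{-1/2}$ trivially, so $t_\ast = O(m^{-1/2})$ in either case.

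The main technical subtlety is the upper bound, where the doubling scale of $\mathfrak T$ exactly matches the period of $f_{\unif,\hbar}$. Making the leapfrog-exclusion argument fully rigorous---quantifying precisely how \eqref{eq:prop_T} keeps $\mathfrak T$-elements away from zeros of $f_{\unif,\hbar}$ and ruling out the possibility that several successive doublings overshoot negative lobes before finally landing in one---is the step that most directly relies on the structural relationship between $\mathfrak T$, $\delta_{m,d,\hbar}(\alpha,r)$, and the zeros of $f_{\unif,\hbar}$, and forms the core of the proof.
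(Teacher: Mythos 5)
Your proposal follows essentially the same route as the paper: apply Lemma~\ref{lem:deltam} to verify~\eqref{eq:prop_deviation} pointwise on $(m^{-1/2}D_\alpha^d)\times E_{\alpha,r}^d$, invoke Proposition~\ref{prop:unif_orbit} via~\eqref{eq:prop_T} to get $\mathcal O_{x,v}=\Unif(\mathfrak I_0(t_\ast))$, then bound $t_\ast$ below by the first zero $\pi\beta_{\hbar^2m}^{-1}m^{-1/2}$ of $f_{\unif,\hbar}$ and above by $2\pi\beta_{\hbar^2m}^{-1}m^{-1/2}$. The only difference in presentation is that where the paper dispatches the upper bound with a terse \enquote{by~\eqref{eq:prop_T}}, you explicitly surface the near-doubling--versus--ratio-$2$-lobe issue and honestly flag that quantifying how~\eqref{eq:prop_T} excludes $\mathfrak T$ from leapfrogging the first negative lobe is the step left implicit---a caution the paper's own two-step sketch also does not spell out, so this is the shared level of rigor rather than a gap peculiar to your argument.
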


It remains to prove the lemma.
\begin{proof}[Proof of \Cref{lem:deltam}]
We prove \eqref{eq:min_m} for leapfrog flow.
By taking the limit $h\to0$, the assertion for Hamiltonian flow follows analogously.
Let $m>0$.
Leapfrog flow corresponding to the Hamiltonian
\[ H_m(x,v)\ =\ \frac m2|x|^2+\frac12|v|^2 \]
with step size $h>0$, initial condition $(x,v)\in\mathbb R^{2d}$ and integration time $t\in h\mathbb Z$ takes the form
\begin{equation}\label{eq:mLF}
    \begin{aligned}[t]
    \Phi_{h,m}^{t/h}(x,v)\ &=\ \Bigr(\begin{aligned}[t]
        &\cos_{h,m}(m^{1/2}t)\,x\ +\ \sin_{h,m}(m^{1/2}t)(1-h^2m/4)^{-1/2} m^{-1/2}\,v,\\
        &-\sin_{h,m}(m^{1/2}t)(1-h^2m/4)^{1/2}m^{1/2}\,x\ +\ \cos_{h,m}(m^{1/2}t)\,v\Bigr)
        \end{aligned}
    \end{aligned}
\end{equation}
where
\[ \cos_{h,m}(m^{1/2}t)\ =\ \cos(\beta_{h^2m}m^{1/2}t)\quad\text{and}\quad\sin_{h,m}(m^{1/2}t)\ =\ \sin(\beta_{h^2m}m^{1/2}t) \]
with $\beta_{\mathsf{x}}=\arccos(1-\mathsf{x}/2)/\sqrt{\mathsf{x}}=1+O(\mathsf{x})$ as $\mathsf{x}\searrow 0$.
With the projections defined in \eqref{eq:proj}, we write
\[ \Phi_{h,m}^{t/h}(x,v)\ =\ \bigr(\proj_1^d\Phi_{h,m}^{t/h}(x,v),\,\proj_2^d\Phi_{h,m}^{t/h}(x,v)\bigr)\;. \]

Let $\alpha,r\leq d$ and $(x,v)\in (m^{-1/2}D_\alpha^d)\times E_{\alpha,r}^d$.
By \eqref{eq:Dalpha} and \eqref{eq:E},
\begin{equation}\label{eq:Econtrolm}
    ||m^{1/2}x|-d|\ \leq\ \alpha\quad\text{and}\quad\max\bigr(||v|^2-d|,\,\sup\nolimits_{x\in m^{-1/2}D_\alpha^d}|m^{1/2}x\cdot v|\bigr)\ \leq\ r\;.
\end{equation}
Let $t_+,t_-\in h\mathbb Z$.
Inserting \eqref{eq:mLF} into
\[ v_+\cdot(x_+-x_-)\ =\ \proj_2^{d}\Phi_{h,m}^{t_+/h}(x,v)\cdot\bigr(\proj_1^{d}\Phi_{h,m}^{t_+/h}(x,v)-\proj_1^{d}\Phi_{h,m}^{t_-/h}(x,v)\bigr) \]
and applying the triangle inequality yields
\begin{align*}
\Bigr|\frac{v_+\cdot(x_+-x_-)}{m^{-1/2}d}-\sin_{h,m}\bigr(m^{1/2}(t_+-t_-)\bigr)\Bigr| \ \leq \ \rn{1} + \rn{2} + \rn{3} + \rn{4}
\end{align*}
where we introduced

\scalebox{.87}{\parbox{1.1\linewidth}{%
\begin{align*}
\rn{1} \ &=\ \left|\left( \frac12\sin_{h,m}(2m^{1/2}t_+)-\sin_{h,m}(m^{1/2}t_+)\cos_{h,m}(m^{1/2}t_-)\right)(1-h^2m/4)^{1/2}\frac{|m^{1/2}x|^2-d}{d} \right| \;, \\
\rn{2} \ &=\ \left| \left(\frac12\sin_{h,m}(2m^{1/2}t_+)-\cos_{h,m}(m^{1/2}t_+)\sin_{h,m}(m^{1/2}t_-) \right)(1-h^2m/4)^{-1/2}\frac{|v|^2-d}{d}\right| \;, \\
\rn{3} \ &= \  \left| \left(\frac12\sin_{h,m}(2m^{1/2}t_+)-\sin_{h,m}(m^{1/2}t_+)\cos_{h,m}(m^{1/2}t_-)\right)\bigr(1-(1-h^2m/4)^{1/2}\bigr)\right.\\
& \qquad \left. +\left(\frac12\sin_{h,m}(2m^{1/2}t_+)-\cos_{h,m}(m^{1/2}t_+)\sin_{h,m}(m^{1/2}t_-)\right)\bigr((1-h^2m/4)^{-1/2}-1\bigr)\right|\;, \\
\rn{4} \ &= \ \left| \cos(2\beta_{h^2m}m^{1/2}t_+)-\cos\bigr(\beta_{h^2m}m^{1/2}(t_++t_-)\bigr) \right| \left|\frac{m^{1/2}x\cdot v}{d} \right|\;.
\end{align*}
}}

Using the elementary bounds $\left|\sin(\mathsf{a})/2-\sin(\mathsf{b}) \cos(\mathsf{c})\right| \le 3/2$ valid for all $\mathsf{a},\mathsf{b},\mathsf{c} \in \mathbb{R}$ and $\left|\cos(\mathsf{a})-\cos(\mathsf{b})\right| \le 2$ valid for all $\mathsf{a},\mathsf{b} \in \mathbb{R}$, and subsequently inserting \eqref{eq:Econtrolm}, $r\leq d$ as well as $h^2m\leq1$, we obtain
\begin{align}\label{eq:uturn_rn2to5}
\rn{1}+\rn{2} + \rn{3} + \rn{4} \ & \le \ \frac32\left|\frac{|m^{1/2}x|^2-d}{d}\right|+\frac32(1-h^2m/4)^{-1/2}\left|\frac{|v|^2-d}{d}\right| \nonumber\\
& \quad +\frac32\bigr((1-h^2m/4)^{-1/2}-(1-h^2m/4)^{1/2}\bigr) + 2\left|\frac{m^{1/2}x\cdot v}{d}\right|\nonumber\\
&\leq\ \frac32\frac{\alpha+r}{d}+\frac32\bigr(2(1-h^2m/4)^{-1/2}-(1-h^2m/4)^{1/2}-1\bigr)+2\frac{r}{d}\nonumber\\
&\leq\ 5\max(\alpha,r)d^{-1}+\frac34h^2m\;.
\end{align}
An analogous bound holds for the second dot product and hence also for the minimum of the two.
\end{proof}

\subsection{Two-scale Gaussian Distributions}\label{sec:2S}

To go beyond isotropy, we now consider two-scale Gaussian distributions
\begin{equation}\label{eq:2S}
    \gamma^{2S}\ =\ (m_1^{-1/2}\#\gamma_{d_1})\ \otimes\ (m_2^{-1/2}\#\gamma_{d_2})\quad\text{on $\mathbb R^{d_1+d_2}$}
\end{equation}
for $0<m_1\leq m_2$ and $d_1,d_2\in\mathbb N$.
Let $d=d_1+d_2$ denote the total dimension and $\kappa=m_2/m_1$ the condition number.

Via \eqref{eq:f_deviation}, \Cref{thm:U-turn_concentration} asserts that for $(x,v)\sim \gamma^{2S}\otimes\gamma_d$,
\begin{equation}\label{eq:f_conc_2S}
    f(x,v,t_-,t_+)\ =\ f^{2S}_{\unif}(t_+-t_-)\ +\ O\bigr((m_1^{-1}d_1+m_2^{-1}d_2)^{1/2}\bigr)
\end{equation}
with
\begin{equation}\label{eq:f2Sunif}
    f^{2S}_{\unif}(t)\ =\ \sin\bigr(m_1^{1/2}(t_+-t_-)\bigr)\,m_1^{-1/2}d_1\ +\ \sin\bigr(m_2^{1/2}(t_+-t_-)\bigr)\,m_2^{-1/2}d_2
\end{equation}
for all $t_-,t_+\in h\mathbb Z$ with high probability, see \Cref{fig:sin2S}.
Again, we strengthen this result later on to hold with high probability for all $t_-,t_+$.

Note that $f^{2S}_{\unif}$ is the sum of the uniform terms arising in the two isotropic factors of the two-scale distribution.
This results from the geometric structure of the two-scale distribution.
Specifically, $\gamma^{2S}$ concentrates in
\begin{equation}\label{eq:Dalpha_2S}
    D^{2S}_{\alpha}\ =\ (m_1^{-1/2}D_{\alpha_1}^{d_1})\ \times\ (m_2^{-1/2}D_{\alpha_2}^{d_2})\quad\text{for $\alpha=(\alpha_1,\alpha_2)$}
\end{equation}
according to
\begin{equation}\label{eq:D2Sbound}
    \gamma^{2S}(D^{2S}_{\alpha})\ \geq\ 1-4\,\exp\Bigr(-\frac18\min(\alpha_1^2/d_1,\,\alpha_2^2/d_2)\Bigr)\;,
\end{equation}
which follows from \eqref{eq:Dalphaest}.
$D^{2S}_{\alpha}$ consists of spherical shells in the individual factors of the two-scale distribution, in which the motion reduces to the isotropic case discussed above.
The dot products in $f(x,v,t_-,t_+)$ then reduce to sums over the two factors.

\begin{figure}[t]
\centering
\begin{tikzpicture}[scale=2]
\clip (-0.2,-1.5) rectangle ({2*pi+0.8},1.5);

\draw[black, line width=1pt, ->] (0,0) -- ({2*pi+0.2},0) node[anchor=north west, pos=1]{$t$};
\draw[black, line width=1pt, ->] (0,-1.4) -- (0,1.4);
\draw[black, line width=1pt] (-0.05,0) -- (0,0) node[anchor=east, pos=0]{${\scriptstyle 0}$};

\draw [gray, line width=1pt] plot[variable=\t,domain=0:(2*pi+0.2),samples=100,smooth,thick] ({\t},{sin(\t r)+sqrt(100)^(-1/2)*sin(sqrt(100)*\t r)});
\draw [black, line width=1pt, dashed] plot[variable=\t,domain=0:(2*pi+0.2),samples=100,smooth,thick] ({\t},{sin(\t r)+sqrt(100)^(-1/2)*sin(sqrt(100)*\t r)+0.08});
\draw [black, line width=1pt, dashed] plot[variable=\t,domain=0:(2*pi+0.2),samples=100,smooth,thick] ({\t},{sin(\t r)+sqrt(100)^(-1/2)*sin(sqrt(100)*\t r)-0.08});
\node (name) at ({3*pi/4},1.25) [anchor=west]{$f_{\unif}^{2S}(t)\pm\delta$};

\filldraw[purple] (0.15,0) circle (1.2pt);
\filldraw[purple] (0.3,0) circle (1.2pt);
\filldraw[purple] (0.6,0) circle (1.2pt);
\filldraw[purple] (1.2,0) circle (1.2pt);
\filldraw[purple] (2.4,0) circle (1.2pt);
\filldraw[purple] (4.8,0) circle (1.2pt) node[anchor=north west]{$t_\ast$};

\end{tikzpicture}
\caption{\textit{
Uniform term in two-scale Gaussian distributions, cf. \Cref{fig:sinm}.
As $d_1,d_2\to\infty$, the deviations $\delta$ vanish relative to the scale of $f^{2S}_{\unif}$.
The distribution falls within the accelerated phase $\mathfrak A$ if and only if $f^{2S}_{\unif}$ remains non-negative during the first period of the faster oscillating sine.
In the vicinity of the roots of the slower oscillating term, the sign of the uniform term is predominately determined by the faster oscillating term.
}}
\label{fig:sin2S}
\end{figure}

To strengthen \eqref{eq:f_conc_2S}, define
\begin{equation}\label{eq:E2S}
    E^{2S}_{\alpha,r}\ =\ E_{\alpha_1,r_1}^{d_1}\ \times\ E_{\alpha_2,r_2}^{d_2}\quad\text{for $\alpha=(\alpha_1,\alpha_2)$ and $r=(r_1,r_2)$}
\end{equation}
which by \eqref{eq:Eprob}, for $\alpha_1,r_1\leq d_1$ and $\alpha_2,r_2\leq d_2$, satisfies
\begin{equation}\label{eq:Eprob_2S}
    \gamma_d(E^{2S}_{\alpha,r})\ \geq\ 1-8\exp\Bigr(-\frac18\min(r_1^2/d_1,\,r_2^2/d_2)\Bigr)\;.
\end{equation}

\begin{lemma}\label{lem:delta2S}
Consider $\mu=\gamma^{2S}$ as defined in \eqref{eq:2S} with $0<m_1\leq m_2$ and $d_1,d_2\in\mathbb N$.
Let $\alpha=(\alpha_1,\alpha_2)$ and $r=(r_1,r_2)$ such that $\alpha_1,r_1\leq d_1$ and $\alpha_2,r_2\leq d_2$.
Further let $\hbar\geq0$ and set
\begin{equation}\label{eq:delta}
    \delta^{2S}_\hbar(\alpha,r)\ =\ \delta_{m_1,d_1,\hbar}(\alpha_1,r_1)\ +\ \delta_{m_2,d_2,\hbar}(\alpha_2,r_2)
\end{equation}
with $\delta_{m,d,\hbar}$ as defined in \eqref{eq:delta_general}, as well as
\begin{equation}\label{eq:f}
    f^{2S}_{\unif,\hbar}(t)\ =\ \sin\bigr(\beta_{\hbar^2m_1}m_1^{1/2}t\bigr)\,m_1^{-1/2}d_1\ +\ \sin\bigr(\beta_{\hbar^2m_2}m_2^{1/2}t\bigr)\,m_2^{-1/2}d_2\;.
\end{equation}
Suppose $(x,v)\in D_\alpha^{2S}\times E_{\alpha,r}^{2S}$ and $f$ as in \Cref{def:f} using
\begin{itemize}
\item[(i)] Hamiltonian flow, or
\item[(ii)] leapfrog flow with step-size $h>0$ such that $h^2m_2\leq1$.
\end{itemize}
Then, it holds
\begin{equation}\label{eq:min_2S}
    \bigr|f(x,v,t_-,t_+)\ -\ f^{2S}_{\unif,\hbar}(t_+-t_-)\bigr|\ \leq\ \delta^{2S}_\hbar(\alpha,r)
\end{equation}
\begin{itemize}
\item[(i)] with $\hbar=0$ for all $t_-,t_+\in\mathbb R$ in case of Hamiltonian flow, and
\item[(ii)] with $\hbar=h$ for all $t_-,t_+\in h\mathbb Z$ in case of leapfrog flow.
\end{itemize}
\end{lemma}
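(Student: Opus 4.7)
The plan is to leverage the product structure of the two-scale Gaussian and reduce the statement to two independent applications of \Cref{lem:deltam}, one per factor.

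First, I would observe that the Hamiltonian underlying $\gamma^{2S}$ separates as $H(x,v) = H_{m_1}(x_1,v_1) + H_{m_2}(x_2,v_2)$ along the canonical splitting $\mathbb R^{d_1+d_2} = \mathbb R^{d_1}\times\mathbb R^{d_2}$. Consequently, both Hamiltonian flow and the leapfrog integrator decouple: writing $(x,v) = (x_1,x_2,v_1,v_2)$, one has
\[ \varphi_t(x,v)\ =\ \bigr(\varphi_t^{(1)}(x_1,v_1),\,\varphi_t^{(2)}(x_2,v_2)\bigr)\;, \]
where $\varphi_t^{(i)}$ is the corresponding flow on the $i$-th factor with mass $m_i$. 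Denoting $(x_{i,\pm},v_{i,\pm})=\varphi_{t_\pm}^{(i)}(x_i,v_i)$, the dot products entering $f$ split additively as
\[ v_\pm\cdot(x_+-x_-)\ =\ v_{1,\pm}\cdot(x_{1,+}-x_{1,-})\ +\ v_{2,\pm}\cdot(x_{2,+}-x_{2,-})\;. \]

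Next, I would verify that the hypothesis $(x,v)\in D_\alpha^{2S}\times E_{\alpha,r}^{2S}$ projects cleanly onto the factor-wise hypotheses of \Cref{lem:deltam}: by definition \eqref{eq:Dalpha_2S} and \eqref{eq:E2S}, we have $(x_i,v_i)\in (m_i^{-1/2}D_{\alpha_i}^{d_i})\times E_{\alpha_i,r_i}^{d_i}$ for $i=1,2$. Moreover, the step-size assumption $h^2m_2\leq 1$ passes on $h^2m_1\leq 1$ since $m_1\leq m_2$. Hence \Cref{lem:deltam} applies with $(m,d,\alpha,r) = (m_i,d_i,\alpha_i,r_i)$ to each factor, yielding
\[ \Bigr|v_{i,\pm}\cdot(x_{i,+}-x_{i,-})\ -\ \sin\bigr(\beta_{\hbar^2m_i}m_i^{1/2}(t_+-t_-)\bigr)m_i^{-1/2}d_i\Bigr|\ \leq\ \delta_{m_i,d_i,\hbar}(\alpha_i,r_i) \]
for the respective range of $t_\pm$ (all of $\mathbb R$ for Hamiltonian flow, $h\mathbb Z$ for leapfrog).

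Summing the two bounds and invoking the triangle inequality together with the definitions \eqref{eq:delta} of $\delta^{2S}_\hbar$ and \eqref{eq:f} of $f^{2S}_{\unif,\hbar}$ shows
\[ \bigr|v_\pm\cdot(x_+-x_-)\ -\ f^{2S}_{\unif,\hbar}(t_+-t_-)\bigr|\ \leq\ \delta^{2S}_\hbar(\alpha,r) \]
for either choice of sign. Finally, since both $v_+\cdot(x_+-x_-)$ and $v_-\cdot(x_+-x_-)$ lie in the interval $[f^{2S}_{\unif,\hbar}(t_+-t_-)-\delta^{2S}_\hbar(\alpha,r),\,f^{2S}_{\unif,\hbar}(t_+-t_-)+\delta^{2S}_\hbar(\alpha,r)]$, so does their minimum, giving \eqref{eq:min_2S}. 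There is no genuine obstacle here—the argument is structural and essentially bookkeeping—so the only care required is in confirming that the leapfrog integrator indeed preserves the product decomposition (which it does because the force $-\nabla U$ is block-diagonal under the split), and in checking that the individual deviation bounds and uniform terms combine into exactly the claimed $\delta^{2S}_\hbar$ and $f^{2S}_{\unif,\hbar}$.
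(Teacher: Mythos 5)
Your proof is correct and follows essentially the same approach as the paper's: decompose the flow and the dot products along the product structure of the two-scale Gaussian, apply \Cref{lem:deltam} to each isotropic factor, sum the bounds, and pass to the minimum. The paper phrases the last step slightly differently (it invokes \Cref{lem:deltam}'s bound directly for each dot product and then remarks the bound transfers to $f$), but your explicit observation that the minimum of two quantities in an interval remains in that interval is exactly the same point made cleanly.
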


Before we discuss its implications on orbit selection, we show how the lemma results from applying \Cref{lem:deltam} to the individual isotropic factors of the two-scale distribution.

\begin{proof}[Proof of \Cref{lem:delta2S}]
We again show \eqref{eq:min_2S} for leapfrog flow while the result for Hamiltonian flow follows in the limit $h\to0$.
Define the norm adapted to the two-scale geometry by
\begin{equation}\label{eq:2Snorm}
    |x|_{2S}^2\ =\ m_1|x^1|^2+m_2|x^2|^2\quad\text{for $x=(x^1,x^2)\in\mathbb R^{d_1+d_2}$.}
\end{equation}
Leapfrog flow with respect to the two-scale Hamiltonian
\begin{equation}\label{eq:H2S}
    H_{2S}(x,v)\ =\ \frac12|x|_{2S}^2 + \frac12|v|^2
\end{equation}
with step-size $h>0$, $t\in h\mathbb Z$, and $(x,v)=(x^1,x^2,v^1,v^2)\in\mathbb R^{2(d_1+d_2)}$ is a combination of leapfrog flow \eqref{eq:mLF} in the two isotropic factors of the two-scale distribution
\begin{align}\label{eq:twoscaleLF}
    \Phi_{h,2S}^{t/h}(x,v)\ =\ \Bigr(
    &\proj_1^{d_1}\Phi_{h,m_1}^{t/h}(x^1,v^1),\,\proj_1^{d_2}\Phi_{h,m_2}^{t/h}(x^2,v^2),\nonumber\\
    &\proj_2^{d_1}\Phi_{h,m_1}^{t/h}(x^1,v^1),\,\proj_2^{d_2}\Phi_{h,m_2}^{t/h}(x^2,v^2)\Bigr)\;.
\end{align}
Let $(x_+,v_+)=(x^1_+,x^2_+,v^1_+,v^2_+)$ and $(x_-,v_-)=(x^1_-,x^2_-,v^1_-,v^2_-)$.
The first dot product in $f$ decomposes into the two scales as
\begin{equation}\label{eq:fac}
    v_+\cdot(x_+-x_-)\ =\ v^1_+\cdot(x^1_+-x^1_-)\ +\ v^2_+\cdot(x^2_+-x^2_-)\;.
\end{equation}
Applying \Cref{lem:deltam} to the individual terms yields
\begin{equation*}
\begin{aligned}[t]
&\Bigr|v^1_+\cdot(x^1_+-x^1_-)m_1^{1/2}d_1^{-1}\ -\ \sin\bigr(\beta_{h^2m_1}m_1^{1/2}(t_+-t_-)\bigr)\Bigr|\ \leq\ \delta_{m_1,d_1,h}(\alpha_1,r_1)\;, \\
&\Bigr|v^2_+\cdot(x^2_+-x^2_-)m_2^{1/2}d_2^{-1}\ -\ \sin\bigr(\beta_{h^2m_2}m_2^{1/2}(t_+-t_-)\bigr)\Bigr|\ \leq\ \delta_{m_2,d_2,h}(\alpha_2,r_2)\;.
\end{aligned}
\end{equation*}
Inserting into \eqref{eq:fac} shows
\[ \bigr|v_+\cdot(x_+-x_-)\ -\ f^{2S}_{\unif,h}(t_+-t_-)\bigr|\ \leq\ \delta_{m_1,d_1,h}(\alpha_1,r_1)\ +\ \delta_{m_2,d_2,h}(\alpha_2,r_2)\ =\ \delta^{2S}_h(\alpha,r)\;.
 \]
An analogous bound holds for the second dot product in $f$ and hence for $f$ itself.
\end{proof}

Let us discuss the lemma's implications on orbit selection according to \Cref{prop:unif_orbit}.
Since $(x,v)\sim\gamma^{2S}\otimes\gamma_d$ is contained in $D_\alpha^{2S}\times E_{\alpha,r}^{2S}$ with $\alpha_1,r_1=\Theta(d_1^{1/2})$ and $\alpha_2,r_2=\Theta(d_2^{1/2})$ with high probability, the lemma ensures \eqref{eq:prop_deviation} holds with $f^{2S}_{\unif,\hbar}$ and $\delta^{2S}_\hbar(\alpha,r)$ for all $t_-,t_+$.
Assume \eqref{eq:prop_T} holds.
The proposition then asserts the orbit selection in NUTS to be uniform from the orbits $\mathfrak I_0(t_\ast)$ of physical time length
\begin{equation}\label{eq:tast2S}
    t_\ast\ =\ \inf\bigr\{t\in\mathfrak T\ :\ f^{2S}_{\unif,\hbar}(t)<0\bigr\}\ \wedge\ \max\mathfrak T\;.
\end{equation}
Since the two-scale distribution satisfies the Poincar\'e inequality \eqref{eq:poincare} with $m=\Theta(m_1)$, the integration times of critical Randomized HMC are of order $m_1^{-1/2}$.
This enables global moves in both scales, in contrast to qualitatively shorter integration times which restrict to local moves in the spread-out $m_1$-scale, cf. \Cref{fig:Gaussian_acceleration}.
In order to permit critical orbit lengths in NUTS, we therefore assume a maximal orbit length $\max\mathfrak T=\Omega(m_1^{-1/2})$.
For simplicity, we restrict our attention to Hamiltonian flow, in which case $\hbar=0$ and $f^{2S}_{\unif,0}=f^{2S}_{\unif}$ as in \eqref{eq:f2Sunif}.
Note however that the step size $h$ remains a parameter as it determines the physical time spacing between consecutive orbit elements.

For all $h$, which determine $\mathfrak T$ via \eqref{eq:frakT}, it holds
\begin{equation}\label{eq:ineqeq}
    \inf\bigr\{t\in\mathfrak T\ :\ f^{2S}_{\unif}(t)<0\bigr\}\ \geq\ \inf\bigr\{t\geq0\ :\ f^{2S}_{\unif}(t)<0\bigr\}\;.
\end{equation}
The right hand side is either contained in $m_2^{-1/2}(0,2\pi)$ if the faster oscillating sine term in $f^{2S}_{\unif}$ produces negative values within that interval, or $\Omega(m_1^{-1/2})$ since otherwise the slower oscillating sine term dominates forcing positive values up to the vicinity of its root in $\pi m_1^{-1/2}$, see \Cref{fig:sin2S}.
Since $f_{\unif}^{2S}(t)<0$ for some $t\in m_2^{-1/2}(0,2\pi)$ if and only if
\[ g_{\kappa,\,d_2/d_1}(t)\ =\ \sin(\kappa^{-1/2}t)+\sin(t)\kappa^{-1/2}d_2/d_1\ <\ 0\quad\text{for some $t\in(0,2\pi)$,} \]
this distinction solely depends on the relation between the condition number $\kappa=m_2/m_1$ and $d_2/d_1$.

Define the accelerated phase
\begin{equation}\label{eq:BP}
    \mathfrak A\ =\ \bigr\{(\kappa,\,d_2/d_1)\in[1,\infty)\times(0,\infty)\ :\ g_{\kappa,\,d_2/d_1}(t)\geq0\text{ for all $t\in(0,2\pi)$}\bigr\}
\end{equation}
depicted in \Cref{fig:PT} and, abusing notation, write $\gamma^{2S}\in\mathfrak A$ iff the distributions parameters $(\kappa,\,d_2/d_1)\in\mathfrak A$.
In particular, we observe the following dichotomy:
If $\gamma^{2S}\in\mathfrak A$, it holds $t_\ast=\Omega(m_1^{-1/2})$ for all $h$ permitted under \eqref{eq:prop_T}.
If on the other hand $\gamma^{2S}\notin\mathfrak A$, the right hand side of \eqref{eq:ineqeq} is contained in $m_2^{-1/2}(0,2\pi)$.
In case there exists a permitted step size $h$ such that \eqref{eq:ineqeq} becomes an equality, we then encounter qualitatively shorter orbits of length $t_\ast=\Theta(m_2^{-1/2})$ for such $h$.
The existence of such a step size is however not guaranteed for all $\gamma^{2S}\notin\mathfrak A$ since, especially in low-dimensional settings where concentration of the U-turn property is not pronounced, \eqref{eq:prop_T} might be severe, excluding a wide range of step sizes.
Nevertheless, as the asymptotic study following the next proposition shows, increasing $d_1$ and $d_2$ weakens \eqref{eq:prop_T} so that we can assert the existence of $\gamma^{2S}\notin\mathfrak A$ and a step size such that $t_\ast=\Theta(m_2^{-1/2})$.

\begin{figure}
\centering
\begin{tikzpicture}[scale=0.1]
\draw[line width=1pt] plot[variable=\t,domain=2.14:100,samples=100,smooth,thick] ({\t},{10*sqrt(\t)*sin(4.6/sqrt(\t) r)});
\fill [gray!20, domain=2.14:100,samples=100,smooth, variable=\t]
  (2.14, 0)
  -- plot ({\t},{10*sqrt(\t)*sin(4.6/sqrt(\t) r)})
  -- ({100},{10*sqrt(100)*sin(4.6/sqrt(100) r)})
  -- (100,0)
  -- cycle;
\draw[thick] (2.14,0) -- (2.14,60);
\fill [gray!20]
  (2.14, 0) -- (2.14,60) -- (-1,60) -- (-1,0) -- cycle;
\draw[dashed] (-1,46) -- (100,46);
\draw[->,line width=1pt] (-1,0) -- (100,0) node[pos=1,below] {$\kappa$};
\draw[->,line width=1pt] (-1,0) -- (-1,60) node[pos=1,left] {$d_2/d_1$};
\node at (55,20) {Accelerated Phase $\mathfrak A$};
\draw[line width=1pt] (90,0) -- (90,-1) node[below] {$100$};
\draw[line width=1pt] (-1,46) -- (-2,46) node[left] {$4.6\approx a$};
\draw[line width=1pt] (-1,0) -- (-1,-1) node[below] {$1$};
\draw[line width=1pt] (2.14,0) -- (2.14,-1) node[below] {$4$};
\draw[line width=1pt] (-1,0) -- (-2,0) node[left] {$0$};
\end{tikzpicture}
\caption{\textit{
The accelerated phase.
For $1\leq\kappa<4$, $(\kappa,\,d_2/d_1)\in\mathfrak A$ for all $d_2/d_1>0$.
As $\kappa\to\infty$, $\sin(\kappa^{-1/2}t)\approx\kappa^{-1/2}t$ becomes an increasingly accurate approximation for $t\in(0,2\pi)$, so that the phase transition asymptotically occurs at $d_2/d_1=a$ with $a=\inf\{q>0:q\sin t+t=0\text{ for some $t\in(0,2\pi)$}\}\approx4.6$.
In between, the transition approximately occurs at $\kappa^{1/2}\sin(\kappa^{-1/2}a)$.
}}
\label{fig:PT}
\end{figure}
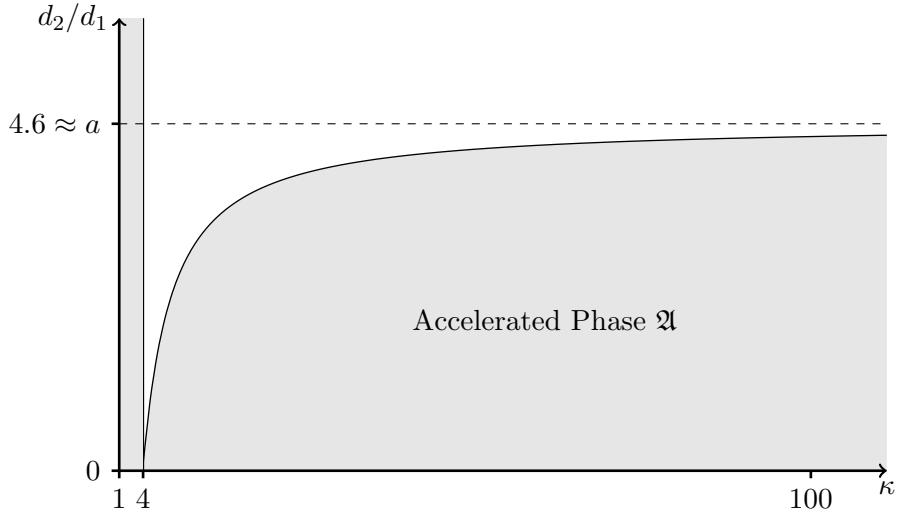

Our observations are summarized in the following proposition.

\begin{proposition}\label{prop:2S}
Consider $\mu=\gamma^{2S}$ as defined in \eqref{eq:2S} with $0<m_1\leq m_2$ and $d_1,d_2\in\mathbb N$.
Let $\alpha=(\alpha_1,\alpha_2)$, $r=(r_1,r_2)$ such that $\alpha_1,r_1\leq d_1$, $\alpha_2,r_2\leq d_2$.
Further let $\hbar\geq0$, and $f^{2S}_{\unif,\hbar}$ and $\delta^{2S}_\hbar$ be as defined in \eqref{eq:f} and \eqref{eq:delta}.
Assume \eqref{eq:prop_T} holds with $f^{2S}_{\unif,\hbar}$ and $\delta^{2S}_\hbar$
\begin{itemize}
\item[(i)] with $\hbar=0$ in case of NUTS using Hamiltonian flow, and 
\item[(ii)] with $\hbar=h$ in case of NUTS using leapfrog flow with step-size $h>0$ such that $h^2m_2\leq1$.
\end{itemize}
Then, for all $(x,v)\in D_\alpha^{2S}\times E_{\alpha,r}^{2S}$, the orbit selection of NUTS satisfies
\[ \mathcal{O}_{x,v}\ =\ \Unif(\mathfrak I_0(t_\ast))\quad\text{with $t_\ast$ as in \eqref{eq:tast2S}.} \]
For NUTS using Hamiltonian flow, assuming $\max\mathfrak T=\Omega(m_1^{-1/2})$, the following dichotomy holds:
\begin{itemize}
\item For all $\gamma^{2S}\in\mathfrak A$ and all permitted step sizes $h$, it holds $t_\ast=\Omega(m_1^{-1/2})$.
\item There exist $\gamma^{2S}\notin\mathfrak A$ and step sizes $h$ such that $t_\ast=\Theta(m_2^{-1/2})$.
\end{itemize}
\end{proposition}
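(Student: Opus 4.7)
The proposition splits into a direct uniformity claim and a subsequent dichotomy for Hamiltonian flow. The uniformity is essentially a translation: \Cref{lem:delta2S} delivers exactly the deviation bound \eqref{eq:prop_deviation} required by \Cref{prop:unif_orbit} with the functions $f^{2S}_{\unif,\hbar}$ and $\delta^{2S}_\hbar(\alpha,r)$ specified here, on the set $D^{2S}_\alpha\times E^{2S}_{\alpha,r}$. Combined with the present assumption \eqref{eq:prop_T}, \Cref{prop:unif_orbit} applies and yields $\mathcal{O}_{x,v}=\Unif(\mathfrak I_0(t_\ast))$ with $t_\ast$ as in \eqref{eq:tast2S}.

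\emph{Accelerated branch.} With $\hbar=0$, an elementary rewriting gives
\[
f^{2S}_{\unif}(s)\ =\ m_1^{-1/2}d_1\cdot g_{\kappa,\,d_2/d_1}(m_2^{1/2}s)\;,
\]
so the sign of $f^{2S}_{\unif}$ is controlled by $g_{\kappa,\,d_2/d_1}$. The hypothesis $\gamma^{2S}\in\mathfrak A$ gives $g\geq 0$ on $(0,2\pi)$. I would extend this to the half-envelope range $(0,\pi\kappa^{1/2}/2]$ via a monotonicity argument: for $\tilde t$ in this interval, write $\tilde t=2\pi k+r$ with integer $k\geq 0$ and $r\in[0,2\pi)$; then $\sin(\tilde t)=\sin(r)$, while $0\leq\kappa^{-1/2}r\leq\kappa^{-1/2}\tilde t\leq\pi/2$ together with monotonicity of $\sin$ on $[0,\pi/2]$ gives $\sin(\kappa^{-1/2}\tilde t)\geq\sin(\kappa^{-1/2}r)$. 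Summing, $g(\tilde t)\geq g(r)\geq 0$. Translating back via $s=\tilde t/m_2^{1/2}$ yields $f^{2S}_{\unif}\geq 0$ on $(0,(\pi/2)m_1^{-1/2}]$, which combined with $\max\mathfrak T=\Omega(m_1^{-1/2})$ gives $t_\ast=\Omega(m_1^{-1/2})$ for every admissible $h$.

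\emph{Non-accelerated example.} If $\gamma^{2S}\notin\mathfrak A$, then $g<0$ on a non-empty open interval $U\subset(0,2\pi)$ bounded away from $0$ (since $g'(0)=\kappa^{-1/2}(1+d_2/d_1)>0$), so $f^{2S}_{\unif}<0$ on $m_2^{-1/2}U\subset(0,2\pi m_2^{-1/2})$ with $\inf(m_2^{-1/2}U)=\Theta(m_2^{-1/2})$. To construct the example, fix $\alpha_i,r_i$ of order $d_i^{1/2}$ and take $d_1,d_2$ large enough that $\delta^{2S}_0(\alpha,r)\ll\|f^{2S}_{\unif}\|_\infty$, which follows from $\delta^{2S}_0/\|f^{2S}_{\unif}\|_\infty\to 0$ as $d_1,d_2\to\infty$. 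The forbidden neighborhoods $\{|f^{2S}_{\unif}|<\delta^{2S}_0\}$ then contract and a positive-measure set of step sizes $h=\Theta(m_2^{-1/2})$ inside $m_2^{-1/2}U$ simultaneously satisfies \eqref{eq:prop_T} (no dyadic $h(2^k-1)\leq\max\mathfrak T$ lies in the forbidden set) and $f^{2S}_{\unif}(h)<-\delta^{2S}_0$. For any such $h$, the smallest element $h$ of $\mathfrak T$ already sits in the negativity set, so $t_\ast=h=\Theta(m_2^{-1/2})$.

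\emph{Main obstacle.} The conceptual heart is the propagation of positivity of $g$ across $\Omega(\kappa^{1/2})$-many periods of the fast-oscillating term in the accelerated case. The key observation is that a single monotonicity inequality on the slow envelope already suffices, and restricting to its half-period sidesteps the more delicate analysis one would otherwise need near and beyond the envelope's peak. For the non-accelerated branch, the care lies in pinning down the asymptotic regime in $d_1,d_2$ that makes \eqref{eq:prop_T} admit step sizes landing strictly inside the negativity set of $f^{2S}_{\unif}$.
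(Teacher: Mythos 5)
Your proof is correct and follows the same overall strategy as the paper (which treats \Cref{prop:2S} as a corollary of the discussion around it rather than giving a separate formal proof). The uniformity claim is indeed a direct combination of \Cref{lem:delta2S} and \Cref{prop:unif_orbit}, exactly as you observe. Where you add genuine value is in the accelerated branch: the paper asserts that when $g\geq 0$ on $(0,2\pi)$ ``the slower oscillating sine term dominates forcing positive values up to the vicinity of its root in $\pi m_1^{-1/2}$,'' pointing only to \Cref{fig:sin2S}, but does not rule out that positivity might fail somewhere in $(2\pi,\pi\kappa^{1/2})$ for large $\kappa$, which would give $t_\ast\ll m_1^{-1/2}$. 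Your periodicity-plus-monotonicity argument --- writing $\tilde t=2\pi k+r$ so that $\sin\tilde t=\sin r$ while $\sin(\kappa^{-1/2}\tilde t)\geq\sin(\kappa^{-1/2}r)$ on the half-period $(0,\pi\kappa^{1/2}/2]$ --- closes this gap cleanly and delivers the required absolute-constant lower bound $t_\ast\geq\frac\pi2 m_1^{-1/2}\wedge\max\mathfrak T$ uniformly over $\mathfrak A$. The non-accelerated branch follows the paper's suggested asymptotic programme (take $d_1,d_2\to\infty$ with $\kappa,d_2/d_1$ fixed so the forbidden neighbourhoods contract), with two small points worth making fully explicit in a final write-up: the finiteness of $\mathfrak T$ (since $k_{\max}=O(\log\kappa)$ when $h=\Theta(m_2^{-1/2})$ and $\max\mathfrak T=\Theta(m_1^{-1/2})$), which makes the union of bad step-size sets controllable, and the observation that $\inf U>0$ depends on the fixed $(\kappa,d_2/d_1)$ --- harmless here since only existence is claimed.
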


The implications of the proposition on accelerated convergence in NUTS is discussed following \Cref{thm:NUTSmixing}.

By studying the limit $d_1,d_2\to\infty$, we can strengthen the dichotomy.
More precisely, take a two-scale Gaussian distribution and let $d_1,d_2\to\infty$ while keeping $d_2/d_1$, $m_1$ and $m_2$ fixed.
As the deviations $\delta^{2S}_0$ from $f^{2S}_{\unif}$ vanish relative to its scale, \eqref{eq:prop_T} asymptotically only excludes $\mathfrak T$ from intersecting the fixed roots of $f^{2S}_{\unif}$, which corresponds to excluding a discrete set of step sizes with Lebesgue measure zero.
Therefore, if $\gamma^{2S}\in\mathfrak A$, then $t_\ast=\Omega(m_1^{-1/2})$ asymptotically for almost all $h$.
On the other hand, if $\gamma^{2S}\notin\mathfrak A$,
\[ J\ =\ \bigr\{t\in m_2^{-1/2}(0,2\pi)\ :\ f^{2S}_{\unif}(t)<0\bigr\} \]
is a non-empty open interval and $t_\ast=\Theta(m_2^{-1/2})$ if and only if $\mathfrak T\cap J\ne\emptyset$.
This is precisely the case for step sizes in the set $\mathfrak h=\bigcup_{k>0}\frac{J}{2^k-1}$.
In particular, there is a set of step sizes with positive Lebesgue measure for which $t_\ast=\Theta(m_2^{-1/2})$.

In summary, the following dichotomy holds asymptotically:
\begin{itemize}
\item For all $\gamma^{2S}\in\mathfrak A$ and almost all step sizes $h>0$, it holds $t_\ast=\Omega(m_1^{-1/2})$.
\item For all $\gamma^{2S}\notin\mathfrak A$ there exists a set of step sizes $h$ of positive measure such that $t_\ast=\Theta(m_2^{-1/2})$.
\end{itemize}

\begin{remark}\label{rem:technicalissue}
Note that in the accelerated regime, we only assert a lower bound of critical order on the orbit length $t_\ast$.
This stems from the following consideration:
The sign of $f^{2S}_{\unif}$ in the vicinity of the roots of the slower oscillating sine term is primarily determined by the faster oscillating term, see \Cref{fig:sin2S}.
Due to periodicity, the orbit lengths $\mathfrak T$ checked for U-turns in orbit selection might all fall within close proximity of these roots.
In this case, the faster oscillating term may ``hide'' U-turns causing positive signs of $f^{2S}_{\unif}$ across $\mathfrak T$.
Then, orbit selection produces an artificially long orbit beyond criticality.
While this issue does occur in NUTS with fixed step size, it requires close alignment of $\mathfrak T$ with the roots of the slower oscillating term.\footnote{This observation is similar to the looping phenomenon described in \cite{BouRabeeOberdoersterNUTS1}.}
Therefore, randomization of the step size appears to be a promising solution.
\end{remark}

In the preceding considerations, we largely restricted to NUTS using Hamiltonian flow.
The reason being that for NUTS using leapfrog flow, the sign of $f^{2S}_{\unif,h}$ cannot be described by a function depending only on $\kappa$ and $d_2/d_1$ due to the additional correction terms $\beta$, see \eqref{eq:f}.
Therefore, the case of Hamiltonian flow allows for a clearer and more insightful discussion.
However, it remains possible to define the accelerated phase for NUTS using leapfrog flow.

\subsection{The Harmonic Chain}\label{sec:harmonicchain}

Denote the eigenvalues of $\C$ by $\sigma_i^2$.
\Cref{thm:U-turn_concentration} asserts concentration of the U-turn property around the uniform term being of order $\tr(\C^{1/2}) = \sum_{i=1}^d\sigma_i$ with deviations of order $(\tr\C)^{1/2} = \bigr(\sum_{i=1}^d\sigma_i^2\bigr)^{1/2}$, see \eqref{eq:f_deviation}.
In \textsection\ref{sec:iso}, we saw how this yields tight concentration for isotropic Gaussian distributions---already in moderate dimension.
For example in case of the canonical Gaussian, the uniform term is of order $d$ and dominates the deviations of order $d^{1/2}$ even for moderate values of $d$, see \Cref{fig:Uturn_conc_stdGaussian}.

However, if the uniform term is not qualitatively larger than the deviations, the concentration result seizes to be effective.
For instance, this is the case if the variances decay sufficiently quickly as in the following example:
Consider the harmonic chain \cite{krauth2024} of $d$ particles $x_1,\dots,x_d$ in a fixed interval with periodic boundary conditions where each $x_i$ is connected to $x_{i-1}$ and $x_{i+1}$ via springs of fixed stiffness.
The resulting quadratic potential energy yields a Gaussian distribution over the configurations.
The eigenvalues of its covariance matrix decay roughly quadratically towards zero.
Therefore, let $\sigma_i^2=i^{-2}$.
Then, the uniform term is comparable to the harmonic number
\[ \tr(\C^{1/2})\ =\ \sum_{i=1}^di^{-1}\ =\ H_d\ =\ \Theta(\log d) \]
while the deviations are of constant order since the variances are summable.
A pronounced difference in magnitude hence only arises in much higher dimension than in case of the canonical Gaussian.

\section{Mixing of NUTS in Two-scale Gaussian Distributions}\label{sec:mixing}

In this section, we present mixing guarantees for NUTS based on concentration of the U-turn property in two-scale Gaussian distributions, see \textsection\ref{sec:2S}.
In \Cref{prop:2S}, we have established conditions under which the orbit selection simplifies to
\[ I\ \sim\ \mathcal{O}_{x,v}\ =\ \Unif(\mathfrak I_0(t_\ast))\quad\text{for all $(x,v)\in D^{2S}_{\alpha}\times E^{2S}_{\alpha,r}$} \]
with $t_\ast$ given in \eqref{eq:tast2S}.
In case of NUTS using Hamiltonian flow, the index selection is also uniform, $\iota\sim\Unif(I)$.
Together, this yields an integration time $h\iota$ with distribution
\begin{equation}\label{eq:tau_uniformHMC}
\tau_\ast\ =\ \sum_{j\in\mathbb Z}\frac{\max(2^{k_\ast}-|j|,\,0)}{2^{2k_\ast}}\,\delta_{hj}
\end{equation}
where $k_\ast\in\mathbb N$ is such that $t_\ast=h(2^{k_\ast}-1)$.

Consequently, for $x\in D^{2S}_{\alpha}$, NUTS using Hamiltonian flow coincides with the HMC method featuring the integration time distribution $\tau_\ast$ in the event $\{v\in E^{2S}_{\alpha,r}\}$, cf. \Cref{def:HMC}.
In other words, for all $x\in D^{2S}_{\alpha}$, there exists a coupling of $X_{\NUTS}\sim\pi_{\NUTS}(x,\cdot)$ and $X_{\mathrm{HMC}}\sim\pi_{\mathrm{HMC}(\tau_\ast)}(x,\cdot)$ such that $X_{\NUTS}\,\mathbf1_{\{v\in E^{2S}_{\alpha,r}\}}=X_{\mathrm{HMC}}\,\mathbf1_{\{v\in E^{2S}_{\alpha,r}\}}$.
Abusing notation, we write
\begin{equation}\label{eq:NUTSisHMC_exact}
    \pi_{\NUTS}(x,\cdot)\,\mathbf1_{\{v\in E^{2S}_{\alpha,r}\}}\ =\ \pi_{\mathrm{HMC}(\tau_\ast)}(x,\cdot)\,\mathbf1_{\{v\in E^{2S}_{\alpha,r}\}}\quad\text{for all $x\in D^{2S}_{\alpha}$.}
\end{equation}
When using leapfrog flow, the index selection is not uniform but instead follows a Boltzmann-weighted categorical distribution.
In this case, a related statement holds in a more constrained event.

The reduction of NUTS, an HMC method whose integration time distribution $\tau_{x,v}$ depends on position and velocity, to the much simpler instance of HMC with state-independent integration time distribution $\tau_\ast$ enables the transfer of techniques established for HMC to the mixing analysis of NUTS.
In particular, coupling methods accurately capture mixing of the simpler instance of HMC, which can then be transferred to NUTS.
This yields mixing guarantees for NUTS as stated in the next theorem.

To simplify presentation, we introduce notation that omits poly-logarithmic factors in the model parameters.
Let $a=(d_1,d_2,m_1^{-1},m_2,\eps^{-1})\in\mathcal A$ be the domain of all model parameters, where $\eps$ is the desired accuracy in total variation.
For $f,g:\mathcal A\to(0,\infty)$, write $f=\widetilde O(g)$ iff there exists $k\in\mathbb N$ such that
\[ \limsup_{\max a\to\infty}\frac{f(a)}{g(a)\log^k\max a}\ <\ \infty\;. \]
Define $\widetilde\Omega$ and $\widetilde\Theta$ correspondingly.

\begin{theorem}\label{thm:NUTSmixing}
Consider the two-scale Gaussian distribution $\mu=\gamma^{2S}$ defined in \eqref{eq:2S}.
Let $\mathfrak T$, $t_\ast$ and $\mathfrak A$ be as in \eqref{eq:frakT}, \eqref{eq:tast2S} and \eqref{eq:BP}.
Let $\eps>0$ and $x^0\in D^{2S}_{\alpha^0}$ for $\alpha^0=(\alpha^0_1,\alpha^0_2)$ such that $\alpha_i^0=\widetilde O(d_i^{1/2})$, $i=1,2$.
Suppose the assumptions of \Cref{prop:2S} hold.
There exists
\begin{equation}\label{eq:hbar}
    \bar h\ =\ \widetilde\Omega\bigr(m_2^{-1/2}d^{-1/4}\min(m_1^{1/2}t_\ast,\,1)^2\bigr)\quad\text{where $d=d_1+d_2$}
\end{equation}
such that additionally assuming $h\leq\bar h$ in case of NUTS using leapfrog flow, the total variation mixing time of NUTS initialized in $x^0$ to accuracy $\eps$ satisfies
\begin{equation*}\label{eq:thm_mix}
    \tmix(\eps,x^0)\ =\ \inf\bigr\{ n \in \mathbb{N}\,:\,\TV\bigr(\pi_{\NUTS}^n(x^0,\cdot),\,\gamma^{2S}\bigr)\leq \eps \bigr\}\ =\ \widetilde O\bigr(\min(m_1^{1/2}t_\ast,\,1)^{-2}\bigr)\;.
\end{equation*}
In particular, for NUTS using Hamiltonian flow, given that $\max\mathfrak T=\Omega(m_1^{-1/2})$, we have the following dichotomy:
\begin{itemize}
\item For all $\gamma^{2S}\in\mathfrak A$ and all permitted step sizes $h$, it holds $\tmix(\eps,x^0)=\widetilde O(1)$.
\item There exist $\gamma^{2S}\notin\mathfrak A$ and step sizes $h$ such that $\tmix(\eps,x^0)=\widetilde O(\kappa)$.
\end{itemize}
\end{theorem}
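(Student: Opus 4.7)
The plan is to reduce NUTS on a high-probability typical set to an HMC method with state-independent integration-time law $\tau_\ast$ from \eqref{eq:tau_uniformHMC}, analyse the contraction of that HMC method via a synchronous coupling in the two-scale weighted norm \eqref{eq:2Snorm}, and transfer the resulting Wasserstein bound back to NUTS.

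\textbf{Step 1 (reduction).} First, I would choose $\alpha_i, r_i = \widetilde\Theta(d_i^{1/2})$ so that $\gamma^{2S}(D^{2S}_\alpha)$ and $\gamma_d(E^{2S}_{\alpha,r})$ are $1 - \widetilde o(1)$ by \eqref{eq:D2Sbound} and \eqref{eq:Eprob_2S}. By \Cref{prop:2S} the orbit selection on this typical set equals $\Unif(\mathfrak I_0(t_\ast))$; combined with uniform index selection---automatic for Hamiltonian flow by energy conservation and yielding \eqref{eq:NUTSisHMC_exact}---this produces the integration-time law $\tau_\ast$. For leapfrog flow I would bound the cumulative energy error $|H\circ\Phi_h^i - H|(x,v)$ along orbits of physical length $t_\ast$ by a standard leapfrog estimate of order $\widetilde O(h^2 m_2 d^{1/2})$ and require it to be absorbed into the contraction gap of order $\Theta(\min(m_1^{1/2}t_\ast,1)^2)$ obtained in Step 2; solving for $h$ yields the bound \eqref{eq:hbar}.

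\textbf{Step 2 (HMC coupling).} On $\gamma^{2S}$ the Hamiltonian-flow HMC transition is linear, $X = \cos(\C^{-1/2}T)x + \sin(\C^{-1/2}T)\C^{1/2}v$. I would couple two copies started at $x$ and $y$ via shared velocity $v\sim\gamma_d$ and shared time $T\sim\tau_\ast$. In the two-scale norm,
\begin{equation*}
\Ex|X-Y|_{2S}^2\ =\ \sum_{i=1,2} m_i\,\Ex_{T\sim\tau_\ast}\bigl[\cos^2(m_i^{1/2}T)\bigr]\,|x_i - y_i|^2\;.
\end{equation*}
For the triangular law $\tau_\ast$ supported on $\{hj : |j| < 2^{k_\ast}\}$, an elementary computation gives $\Ex[\cos^2(m_1^{1/2}T)] = 1 - \Theta(\min(m_1^{1/2}t_\ast,1)^2)$---the $\min$ reflecting saturation at full phase randomisation once $m_1^{1/2}t_\ast \gtrsim 1$---while $\Ex[\cos^2(m_2^{1/2}T)]$ stays bounded away from $1$ since $m_2^{1/2}t_\ast = \Omega(1)$ in both phases of \Cref{prop:2S}. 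The slow scale hence controls per-step contraction at rate $\rho = 1 - \Theta(\min(m_1^{1/2}t_\ast,1)^2)$, and iteration yields $\Ex|X_n-Y_n|_{2S}^2 \leq \rho^n |x^0-y^0|_{2S}^2$.

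\textbf{Step 3 (Transfer and dichotomy).} I would then couple two copies of NUTS by applying the HMC coupling whenever $(x,v)$ lies in $D^{2S}_\alpha \times E^{2S}_{\alpha,r}$. Invariance of $\gamma^{2S}$ and the $1 - \widetilde o(1)$ mass of the typical set let a union bound over $n = \widetilde O(\min(m_1^{1/2}t_\ast,1)^{-2})$ iterations keep both chains typical with high probability, so the Wasserstein bound survives; one additional HMC step, exploiting smoothness of the Gaussian velocity density in the orthogonal direction, upgrades Wasserstein to total variation. Substituting the two cases $t_\ast = \Omega(m_1^{-1/2})$ and $t_\ast = \Theta(m_2^{-1/2})$ from \Cref{prop:2S} into $\min(m_1^{1/2}t_\ast,1)^{-2}$ yields the asserted dichotomy. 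The hard part will be the leapfrog case: the bound $\bar h$ in \eqref{eq:hbar} must be sharp enough that the energy-error perturbation of $\tau_\ast$ changes $\Ex[\cos^2(m_1^{1/2}T)]$ only by $o(\min(m_1^{1/2}t_\ast,1)^2)$, which is what forces the characteristic quadratic factor in $\bar h$; establishing the precise dimensional scaling $d^{-1/4}$ requires careful accounting of leapfrog energy drift on orbits of length $t_\ast$ in the two-scale geometry while simultaneously preserving the coupling structure under the perturbation.
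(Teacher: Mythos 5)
Your architecture matches the paper's: reduce NUTS to the state-independent $\mathrm{HMC}(\tau_\ast)$ on a typical set, prove Wasserstein contraction by synchronous coupling in $|\cdot|_{2S}$, upgrade to total variation via a maximal-velocity coupling, and transfer back. Steps 1--2 are essentially correct, and your contraction computation (including the reason for $\min(m_1^{1/2}t_\ast,1)^2$ via $\Ex[\cos^2(m_1^{1/2}T)]$) agrees with \Cref{lem:contr}.

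However, there are two gaps in your treatment of the leapfrog case and the transfer. First, the paper does \emph{not} absorb leapfrog error as a perturbation of $\tau_\ast$ altering $\Ex[\cos^2(m_1^{1/2}T)]$. Instead, \Cref{lem:MultHMCtoUnifHMC} restricts to a further accept event $A_I(x,v)$ on which the Boltzmann-weighted index selection is \emph{exactly} uniform, so $\tau_\ast$ remains exact and the contraction computation in \Cref{lem:contr} holds verbatim. The price of this restriction is a rejection probability of order $h^2 m_2 \max_i(\alpha_i,r_i,h^2 m_i d_i)$, which must be summed over the horizon $\mathfrak{E}=\widetilde\Theta(\min(m_1^{1/2}t_\ast,1)^{-2})$ inside Assumption (iii) of \Cref{thm:ARmix}. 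Moreover, because the chain started at $x^0$ drifts out of $D^{2S}_{\alpha^0}$, the paper enlarges the localization domain to $D^{2S}_{\alpha_\hbar(\mathfrak{H})}$ via \Cref{lem:stab_leapfrog}, and this $\alpha$ grows \emph{linearly} in the horizon, so $\alpha_i=\widetilde\Theta(\mathfrak{H}d_i^{1/2})$. Feeding this back into the rejection budget is what produces the \emph{two} factors of $\min(m_1^{1/2}t_\ast,1)$ in \eqref{eq:hbar} --- one from $\mathfrak{E}$ appearing directly, one from $\alpha$ scaling with $\mathfrak{H}$. Your explanation attributes the quadratic factor to a perturbation of the contraction rate, which is the wrong mechanism and would not let you recover the stated $\bar h$ exponent.

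Second, your transfer step is underspecified. Invariance of $\gamma^{2S}$ lets you keep the chain \emph{started in stationarity} typical, but the chain started at $x^0$ needs the explicit stability estimate of \Cref{lem:stab_leapfrog}; a union bound alone does not suffice because you must also track the growth of the localization domain under NUTS transitions, which in turn affects the rejection budget. The paper packages the whole bookkeeping --- contraction within epochs, partial TV regularization at epoch ends, rejection probability control, and exit-time control --- into the accept/reject chain framework \Cref{thm:ARmix}; your ``one additional HMC step'' TV upgrade is the right idea but must be applied at the end of each epoch (and the regularization constant $C_{\mathrm{Reg}}$ there enters the $\bar h$ calculation too), not just once at the end.
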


This dichotomy follows immediately from the mixing time bound and the corresponding dichotomy for the orbit length selected by NUTS, which is explained in detail in the discussion around \Cref{prop:2S}.

Let us review what the theorem entails for the ability of NUTS to achieve accelerated convergence---the central motif of this study.
We first focus on NUTS using Hamiltonian flow.

On one hand, the theorem asserts that for $\gamma^{2S}\in\mathfrak A$, NUTS mixes in $\widetilde O(1)$ transitions for all permitted step sizes, a term also explained around \Cref{prop:2S}.
Assuming the issue of ``hidden'' U-turns discussed in \Cref{rem:technicalissue} not no occur, NUTS selects orbit of physical time length $t_\ast=\Theta(m_1^{-1/2})$ in this case.
Thus, NUTS requires a total integration time along Hamiltonian flow of $\widetilde O(m_1^{-1/2})$ to mix.
This precisely corresponds to the accelerated relaxation time of critical Randomized HMC described in the introduction.
Therefore, in this case, NUTS achieves the diffusive-to-ballistic speed-up.

On the other hand, the theorem guarantees the existence of two-scale Gaussians and step sizes for which NUTS mixes in $\widetilde O(\kappa)$ transitions.
We expect this bound to be sharp.
In light of the integration times of order $m_2^{-1/2}$ realized by NUTS in these cases, this would be consistent with known convergence lower bounds for HMC with short integration times \cite{LSTlower}.

Note that to achieve acceleration, it likely suffices for NUTS to select long orbits of length $\Omega(m_1^{-1/2})$ in a sufficiently large fraction of transitions.
Even for two-scale Gaussians outside of $\mathfrak A$, a portion of step sizes might yield such orbits.\footnote{The step sizes in $\mathfrak h^c$ in the asymptotic study following \Cref{prop:2S}.}
Therefore, randomizing the step size of NUTS may reliably yield accelerated convergence beyond $\mathfrak A$.
Randomizing the step size in NUTS, primarily with regard to discretization error, is an active area of research, see \cite{BouRabeeCarpenterMarsden2024,BouRabeeCarpenterKleppeMarsden2024,bourabee2025withinorbitadaptiveleapfrognouturn}.

Although not carried out explicitly, similar statements holds for NUTS using leapfrog flow with a suitably adapted definition of accelerated phase $\mathfrak A$.
Interestingly, for $t_\ast=\Omega(m_1^{-1/2})$, the step size upper bound \eqref{eq:hbar} coming out to $\widetilde\Omega(m_2^{-1/2}d^{-1/4})$ is anticipated to be optimal for controlling leapfrog discretization errors in Gaussian distributions, see \cite{chen2023,BouRabeeOberdoersterNUTS1,apers2022,BePiRoSaSt2013}.
For $h$ of order $\bar h$, the computational complexity in terms of number of gradient evaluations of the potential then is $\widetilde O(\kappa^{1/2}d^{1/4})$.
This is consistent with results on HMC with long integration times \cite{apers2022}.

The remainder of this section is dedicated to proving the theorem and organized as follows:
In \textsection\ref{sec:ARchains}, we state the general coupling framework on which the mixing analysis of NUTS is based.
In \textsection\ref{sec:red}, we make precise the reduction of NUTS both using Hamiltonian and leapfrog flow to HMC with integration time distribution $\tau_\ast$.
In \textsection\ref{sec:contr} and \textsection\ref{sec:OS}, we establish Wasserstein contraction and total variation to Wasserstein regularization of this HMC method.
Finally, in \textsection\ref{sec:mixing_proof}, we combine these results to prove \Cref{thm:NUTSmixing}.

\subsection{Mixing of Accept/reject Markov Chains}\label{sec:ARchains}

Accept/reject Markov chains combine two Markov kernels through an accept/reject mechanism.
Let $(\Omega,\mathcal A,\mathbb P)$ be a probability space, $S$ a Polish state space with metric $\mathsf{d}$ and Borel $\sigma$-algebra $\mathcal B$, and $\mathcal P(S)$ the set of probability measures on $(S,\mathcal B)$.
For  any $x\in S$, the transition steps $X^{\acrj}\sim\pi_{\acrj}(x,\cdot)$ of an accept/reject Markov chain take the general form 
\begin{equation} \label{eq:X}
X^{\acrj}(\omega)\ =\ \Phi^{\ac}(\omega,x)\,\ind_{A(x)}(\omega)\ +\ \Phi^{\rj}(\omega,x)\,\ind_{A(x)^c}(\omega)\;, \end{equation}
where $\omega\in\Omega$, $\Phi^{\ac},\Phi^{\rj}:\Omega\times S\to S$ are product measurable and such that $\Phi^{\ac}(\cdot,x)\sim\pi_{\ac}(x,\cdot)$ and $\Phi^{\rj}(\cdot,x)\sim\pi_{\rj}(x,\cdot)$, and the function $A:S\to\mathcal A$ is measurable.  $A(x)$ represents the event of acceptance in the accept/reject mechanism.  In the event of acceptance, the accept/reject chain follows the accept kernel $\pi_{\ac}$;  otherwise, it follows the reject kernel $\pi_{\rj}$.  For convenience, we write
\begin{equation}\label{eq:ar}
    \pi_{\acrj}(x,\cdot)\,\ind_{A(x)}\ =\ \pi_{\ac}(x,\cdot)\,\ind_{A(x)}\;.
\end{equation}
Between two probability measures $\nu,\eta \in \mathcal{P}(S)$,
the total variation distance and the $L^1$-Wasserstein distance with respect to $\mathsf{d}$ are defined as
\begin{equation} \label{eq:tv_coupling}
\TV\bigr(\nu, \eta \bigr) \ = \ \inf\,\mathbb P[X\neq Y]\quad\text{and}\quad\mathcal W^1_{\mathsf{d}}(\nu,\eta) \ = \ \inf\,\mathbb E\,\mathsf{d}(X,Y)
\end{equation}  
with infima taken over all $\mathrm{Law}(X, Y) \in \mathrm{Couplings}(\nu,\eta)$.

\begin{theorem}[\cite{BouRabeeOberdoerster2024,BouRabeeOberdoersterNUTS1}]\label{thm:ARmix}
    Let $\eps>0$ be the desired accuracy, $\nu\in\mathcal P(S)$ the initial distribution, $D\subseteq S$, and $\mu\in\mathcal P(S)$ the invariant measure of the accept/reject kernel $\pi_{\acrj}$.

    Regarding the accept kernel $\pi_{\ac}$, we assume:
    \begin{itemize}
    \item[(i)]
    There exists $\rho>0$ such that for all $x,\tilde{x}\in D$
    \[ \mathcal W^1_{\mathsf{d}}(\pi_{\ac}(x,\cdot),\pi_{\ac}(\tilde x,\cdot))\ \leq\ (1-\rho)\,\mathsf{d}(x,\tilde x)\;. \]
    
    \item[(ii)]
    There exist $C_{Reg},c>0$ such that for all $x,\tilde{x}\in D$
    \[ \TV\bigr(\pi_{\ac}(x,\cdot),\,\pi_{\ac}(\tilde x,\cdot)\bigr)\ \leq\ C_{Reg}\,\mathsf{d}(x,\tilde x)\ +\ c\;. \]
    \end{itemize}

    Regarding the probability of rejection, we assume:
    \begin{itemize}
    \item[(iii)]
    There exists an epoch length $\mathfrak E\in\mathbb N$ and $b>0$ such that
    \[ 2\,\mathfrak E\,\sup\nolimits_{x\in D}\mathbb{P}(A(x)^c)\ +\ C_{Reg}\,\diam_{\mathsf{d}}(D)\,\exp\bigr(-\rho(\mathfrak E-1)\bigr)\ +\ b\ \leq\ 1\ -\ c\;. \]
    \end{itemize}

    Regarding the exit probability of the accept/reject chain from $D$, we assume:
    \begin{itemize}
    \item[(iv)]
    Over the total number of transition steps $\mathfrak H=\mathfrak E\,\lceil b^{-1}\log(2/\eps)\rceil$, it holds that
    \[ \mathbb P\bigr(T\:\leq\:\mathfrak H\bigr)\ \leq\ \eps/4 \]
     for both $X^{\acrj}_0\sim\nu$ and $X^{\acrj}_0\sim\mu$, where $T=\inf\{k\geq0\,:\,X^{\acrj}_k\notin D\}$.
    \end{itemize}
    Then, the mixing time of the accept/reject chain satisfies:
    \[ \tmix(\eps,\nu)\ =\ \inf\bigr\{n\geq0\,:\,\TV(\nu\pi_{\acrj}^n,\,\mu)\leq\eps\bigr\}\ \leq\ \mathfrak H\;. \]
\end{theorem}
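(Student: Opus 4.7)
The plan is to apply the accept/reject framework of \Cref{thm:ARmix} with accept kernel $\pi_{\ac}=\pi_{\mathrm{HMC}(\tau_\ast)}$, reject kernel $\pi_{\rj}=\pi_{\NUTS}$, acceptance event $A(x)=\{v\in E^{2S}_{\alpha,r}\}$, and domain $D=D^{2S}_{\alpha}$ for suitable $\alpha,r=\widetilde\Theta(d^{1/2})$. Relation \eqref{eq:NUTSisHMC_exact} immediately identifies $\pi_{\acrj}$ with $\pi_{\NUTS}$ on $D$ in the Hamiltonian-flow case; for leapfrog flow one further shrinks $A(x)$ to the event on which the Hamiltonian errors along the selected orbit are uniformly bounded, which holds with high probability under the hypothesis $h\le\bar h$ by standard leapfrog-on-Gaussian error estimates and renders the index selection approximately uniform, yielding an analogue of \eqref{eq:NUTSisHMC_exact}. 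The bound \eqref{eq:hbar} arises from requiring these energy errors to stay controlled over orbits of length $t_\ast$.

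For assumption (i), I would use the synchronous coupling for $\pi_{\mathrm{HMC}(\tau_\ast)}$: by linearity of Hamiltonian flow \eqref{eq:HF} in Gaussian targets, the displacement from $(x,v)$ and $(\tilde x,v)$ after integration time $T$ equals $\cos(\C^{-1/2}T)(x-\tilde x)$. Under the metric $\mathsf{d}(x,\tilde x)=|\C^{-1/2}(x-\tilde x)|$, contraction reduces to bounding $\max_{j=1,2}\mathbb E_{T\sim\tau_\ast}\cos^2(m_j^{1/2}T)$ below one. The discrete triangular distribution $\tau_\ast$ in \eqref{eq:tau_uniformHMC} has Fej\'er-type characteristic function, under which the fast scale is rapidly averaged while the slow scale contributes a dissipation of order $\min(m_1^{1/2}t_\ast,1)^2$. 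This yields contraction rate $\rho=\Theta(\min(m_1^{1/2}t_\ast,1)^2)$ and is the source of the sharp mixing rate.

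Assumption (ii) follows from a standard Gaussian TV-to-Wasserstein computation: $\pi_{\ac}(x,\cdot)$ and $\pi_{\ac}(\tilde x,\cdot)$ are mixtures over $T\sim\tau_\ast$ of Gaussians whose means differ by $\cos(\C^{-1/2}T)(x-\tilde x)$, so a maximal coupling yields $C_{Reg}=\widetilde O(1)$ and, in the leapfrog case, a tolerance $c=o(1)$ absorbing the residual discretization error. Assumption (iii) then holds with epoch length $\mathfrak E=\widetilde\Theta(\rho^{-1})$: the concentration bound \eqref{eq:Eprob_2S} forces $\sup_{x\in D}\mathbb P(A(x)^c)\le\exp(-\widetilde\Theta(1))$ and $\diam_{\mathsf{d}}(D)=\widetilde O(d^{1/2})$, so the polylogarithmic safety encoded in the $\widetilde\Theta$ of $\alpha$, $r$ and $\mathfrak E$ absorbs the remaining constants. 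Assumption (iv) is handled by choosing $\alpha$ slightly larger than $\alpha^0$ and invoking concentration of $\gamma^{2S}$ in $D^{2S}_\alpha$ together with $\gamma^{2S}$-invariance of $\pi_{\NUTS}$ via a union bound over $\mathfrak H=\widetilde O(\rho^{-1})$ steps, started from either $x^0$ or stationarity.

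The main obstacle is assumption (i): obtaining the sharp rate $\rho=\Theta(\min(m_1^{1/2}t_\ast,1)^2)$ requires carefully tracking the Fej\'er-kernel-driven averaging of $\cos^2(m_j^{1/2}T)$ in both scales and identifying the slow scale as the uniform bottleneck across both regimes $t_\ast=\Theta(m_1^{-1/2})$ and $t_\ast=\Theta(m_2^{-1/2})$ of \Cref{prop:2S}. Once $\rho$ is established, \Cref{thm:ARmix} delivers $\tmix(\eps,x^0)=\widetilde O(\rho^{-1})=\widetilde O(\min(m_1^{1/2}t_\ast,1)^{-2})$, and the claimed dichotomy for NUTS using Hamiltonian flow follows by substituting the corresponding dichotomy on $t_\ast$ from \Cref{prop:2S}: $t_\ast=\Omega(m_1^{-1/2})$ gives $\tmix=\widetilde O(1)$ inside $\mathfrak A$, while the constructed $\gamma^{2S}\notin\mathfrak A$ with $t_\ast=\Theta(m_2^{-1/2})$ give $\tmix=\widetilde O(\kappa)$.
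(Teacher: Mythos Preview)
Your proposal does not address \Cref{thm:ARmix} at all. \Cref{thm:ARmix} is the abstract accept/reject framework result, quoted from \cite{BouRabeeOberdoerster2024,BouRabeeOberdoersterNUTS1}, and the paper's own treatment of it is the sketch immediately following the statement: over an epoch of $\mathfrak E-1$ contractive steps of the accept chain (assumption (i)) followed by one regularization step (assumption (ii)), one obtains the minorization $\TV(\pi_{\ac}^{\mathfrak E}(x,\cdot),\pi_{\ac}^{\mathfrak E}(\tilde x,\cdot))\le C_{Reg}\diam_{\mathsf d}(D)e^{-\rho(\mathfrak E-1)}+c$ on $D$; isolating the rejection probability over the epoch and invoking (iii) bounds $\TV(\pi_{\acrj}^{\mathfrak E}(x,\cdot),\pi_{\acrj}^{\mathfrak E}(\tilde x,\cdot))\le 1-b$; iterating over $\lceil b^{-1}\log(2/\eps)\rceil$ epochs and localizing to $D$ via (iv) yields the mixing bound. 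None of this appears in your write-up.

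What you have written is instead a sketch of \Cref{thm:NUTSmixing}, which \emph{applies} \Cref{thm:ARmix} as a black box. If that was your intent, the outline is broadly aligned with the paper's proof in \textsection\ref{sec:mixing_proof}, but two points deserve correction. First, your claim $C_{Reg}=\widetilde O(1)$ for assumption (ii) is not what the paper obtains: \Cref{lem:OS} uses a velocity-shift coupling whose shift carries a $\cot(\beta_{\hbar^2m}m^{1/2}T)$ factor, forcing the exclusion of a set $B$ near its poles; balancing yields $C_{Reg}=O(\min(m_1^{1/2}t_\ast,1)^{-1})$ with an absolute constant $c$, not $C_{Reg}=\widetilde O(1)$. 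Second, your verification of assumption (iv) for the non-stationary start $x^0$ by ``$\gamma^{2S}$-invariance of $\pi_{\NUTS}$ via a union bound'' does not work, since the chain started at $x^0$ is not in stationarity; the paper instead tracks the growth of the confining shell step by step via \Cref{lem:stab_leapfrog}, choosing $\alpha=\alpha_\hbar(\mathfrak H)$ so that $D^{2S}_\alpha$ absorbs $\mathfrak H$ transitions with the required probability.
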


Let us sketch how the four assumptions yield a mixing time upper bound.
Via the coupling characterizations of Wasserstein and total variation distances, the contraction of Assumption (i) reduces the distance between two coupled copies of the accept chain, while the partial total variation to Wasserstein regularization of Assumption (ii) allows for exact meeting with a certain probability once the copies are sufficiently close.
Specifically, over an epoch of $\mathfrak E-1$ contractive transitions followed by the regularization, this implies the minorization condition
\[ \TV\bigr(\pi_{\ac}^{\mathfrak E}(x,\cdot),\,\pi_{\ac}^{\mathfrak E}(\tilde x,\cdot)\bigr)\ \leq\ C_{\mathrm{Reg}}\,\diam_{\mathsf{d}}(D)\,e^{-\rho(\mathfrak E-1)}\ +\ c \]
within the domain $D$.  If $c<1$, the right-hand side of this inequality is strictly less than $1$ for sufficiently large $\mathfrak E$.  This ensures non-zero probability of exact meeting between the coupled copies of the accept chain over the epoch.

By isolating the probability of encountering a rejection during the epoch, two copies of the accept/reject chain can be reduced to copies of the accept chain, resulting in
\begin{align*}
    \TV\bigr(\pi_{\acrj}^{\mathfrak E}(x,\cdot),\,\pi_{\acrj}^{\mathfrak E}(\tilde x,\cdot)\bigr)\ &\leq\ 2\,\mathfrak E\,\sup\nolimits_{x\in D}\mathbb{P}(A(x)^c)\ +\ \TV\bigr(\pi_{\ac}^{\mathfrak E}(x,\cdot),\,\pi_{\ac}^{\mathfrak E}(\tilde x,\cdot)\bigr) \\
    &\leq\ 2\,\mathfrak E\,\sup\nolimits_{x\in D}\mathbb{P}(A(x)^c)\ +\ C_{Reg}\,\diam_{\mathsf{d}}(D)\,e^{-\rho(\mathfrak E-1)}\ +\ c\;.
\end{align*}
Assumption (iii) ensures that the probability of rejection over the epochs $\mathfrak E$ is suitably controlled for the right hand side to be bounded above by $1-b\leq e^{-b}$.  By iterating over $\lceil b^{-1}\log(2/\eps)\rceil$ epochs, mixing to accuracy $\eps$ is achieved.  Assumption (iv) allows to restrict the argument to the domain $D$.

\subsection{Reduction of NUTS to HMC with State-independent Integration Times}\label{sec:red}

For $U\sim\Unif([0,1])$, $(x,v) \in \mathbb{R}^{2d}$ and an index orbit $I \subset \mathbb{Z}$, define the event
\begin{equation}\label{eq:AI}
    A_I(x,v)\ =\ \Bigr\{U\ \leq\ |I|\min_{i\in I}e^{-(H\circ\varphi_{hi}-H)(x,v)}\Bigr(\sum_{i\in I}e^{-(H\circ\varphi_{hi}-H)(x,v)}\Bigr)^{-1}\Bigr\}\;.
\end{equation}

\begin{lemma}\label{lem:MultHMCtoUnifHMC}
Suppose the assumptions of \Cref{prop:2S} hold.
Let $\tau_\ast$ be as in \eqref{eq:tau_uniformHMC} with $t_\ast$ as in \eqref{eq:tast2S}.
Consider NUTS using
\begin{itemize}
\item[(i)] Hamiltonian flow, or
\item[(ii)] leapfrog flow with step-size $h>0$ such that $h^2m_2\leq1$.
\end{itemize}
In the event $\{v\in E^{2S}_{\alpha,r}\}\cap A_I(x,v)$ where $v$ is the initial velocity and $I$ the selected index orbit, NUTS coincides with $\mathrm{HMC}(\tau_\ast)$ as in \Cref{def:HMC},
\begin{equation}\label{eq:MultHMCtoUnifHMC}
    \pi_{\NUTS}(x,\cdot)\,\ind_{\{v\in E^{2S}_{\alpha,r}\}\cap A_I(x,v)}\ =\ \pi_{\mathrm{HMC}(\tau_\ast)}(x,\cdot)\,\ind_{\{v\in E^{2S}_{\alpha,r}\}\cap A_I(x,v)}\quad\text{for all $x\in D^{2S}_{\alpha}$.}
\end{equation}
Further, for all $x\in D^{2S}_{\alpha}$, it holds
\begin{align}\label{eq:rej_prob}
    \mathbb P\bigr(\{v\in E^{2S}_{\alpha,r}\}\cap A_I(x,v)\bigr)
    \ \geq\ 1&-8\,e^{-\frac18\min_{i\in\{1,2\}}r_i^2/d_i}\nonumber\\
    &-2\hbar^2\max_{i\in\{1,2\}}\bigr(m_i\max(\alpha_i,r_i)+\hbar^2m_i^2d_i\bigr)
\end{align}
with $\hbar=0$ in case of Hamiltonian flow, and $\hbar=h$ in case of leapfrog flow.
\end{lemma}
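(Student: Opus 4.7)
I would split the lemma into its two assertions---the distributional identity \eqref{eq:MultHMCtoUnifHMC} and the probability lower bound \eqref{eq:rej_prob}---and treat them in that order, using \Cref{prop:2S} throughout as the entry point that reduces the orbit draw to a uniform sample on $\mathfrak I_0(t_\ast)$.

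\emph{Identification on the good event.} A transition of NUTS layers an orbit draw $I\sim\mathcal O_{x,v}$ with an index draw $\iota\sim\cat(e^{-(H\circ\varphi_{hi}-H)(x,v)})_{i\in I}$, and I would show that both layers reduce to uniform sampling on $\{v\in E^{2S}_{\alpha,r}\}\cap A_I(x,v)$. The orbit layer is handled directly by \Cref{prop:2S}: for $x\in D^{2S}_\alpha$ and $v\in E^{2S}_{\alpha,r}$, $I\sim\Unif(\mathfrak I_0(t_\ast))$. For the index layer in case (i), $H\circ\phi_{hi}=H$ makes all categorical weights equal, so $\iota\sim\Unif(I)$ and $A_I(x,v)=\Omega$ automatically. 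In case (ii) I would write the categorical law as a mixture $\cat(p_i)_{i\in I}=q\,\Unif(I)+(1-q)\,\rho$ with $q=|I|\min_i p_i$ and some residual $\rho$, and observe that the uniform $U$ in \eqref{eq:AI} couples this mixture so that $A_I(x,v)=\{U\leq q\}$; on this event the index is uniform on $I$. Composing the two uniform layers and counting the multiplicity with which each offset $hj$ arises among pairs $(I,i)$ with $I\in\mathfrak I_0(t_\ast)$, $0\in I$ and $i\in I$ then reproduces the triangular law $\tau_\ast$ in \eqref{eq:tau_uniformHMC}, which gives \eqref{eq:MultHMCtoUnifHMC}.

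\emph{Probability lower bound and main obstacle.} The velocity factor follows from \eqref{eq:Eprob_2S}, contributing the $8\exp(-\tfrac18\min_{i\in\{1,2\}}r_i^2/d_i)$ term. Conditionally on $\{v\in E^{2S}_{\alpha,r}\}$ and $I$, I would bound the rejection probability by
\[ 1-|I|\frac{\min_{i\in I}e^{-\Delta_i}}{\sum_{i\in I}e^{-\Delta_i}}\ \leq\ 1-\frac{\min_{i\in I}e^{-\Delta_i}}{\max_{i\in I}e^{-\Delta_i}}\ \leq\ 2\max_{i\in I}|\Delta_i|\;,\quad\Delta_i=(H\circ\varphi_{hi}-H)(x,v)\;, \]
which vanishes in case (i) (matching $\hbar=0$) and reduces the leapfrog case to the uniform energy-error estimate $\max_{i\in I}|\Delta_i|\leq h^2\max_{j\in\{1,2\}}\bigl(m_j\max(\alpha_j,r_j)+h^2 m_j^2 d_j\bigr)$ over any $I\in\mathfrak I_0(t_\ast)$. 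Establishing precisely this last bound is the technical core of the argument. I would exploit the factorisation of the two-scale leapfrog map \eqref{eq:twoscaleLF} to reduce to two independent isotropic estimates, then use the closed-form iterate \eqref{eq:mLF} to evaluate the scale-wise energy $\tfrac12 m_j|x^j|^2+\tfrac12|v^j|^2$ along the orbit; the resulting $O(h^2 m_j^2)$ per-step error combines with the deterministic controls $||m_j^{1/2}x^j|^2-d_j|\leq\alpha_j$, $||v^j|^2-d_j|\leq r_j$ and $|m_j^{1/2}x^j\cdot v^j|\leq r_j$ supplied by $(x,v)\in D^{2S}_\alpha\times E^{2S}_{\alpha,r}$ to yield the asserted bound. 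Analogous leapfrog energy bookkeeping in Gaussian targets has been carried out in \cite{BouRabeeOberdoersterNUTS1,BouRabeeCarpenterMarsden2024}, which I would adapt.
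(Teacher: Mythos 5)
Your proposal matches the paper's proof in structure and substance: the identity \eqref{eq:MultHMCtoUnifHMC} is obtained by reducing both the orbit layer (via \Cref{prop:2S}) and the index layer (via the mixture decomposition of the categorical law keyed to $A_I$) to uniform sampling, and the probability bound \eqref{eq:rej_prob} splits the velocity factor off via \eqref{eq:Eprob_2S} and controls the rejection event by the sup energy error along the orbit. Your intermediate inequality $1-|I|\min/\sum\leq 1-\min/\max\leq 2\max_i|\Delta_i|$ is a cosmetic variant of the paper's $1-|I|\min/\sum\leq 1-e^{-2\sup|\Delta_l|}\leq 2\sup|\Delta_l|$, and the scale-wise energy estimate via the explicit iterate \eqref{eq:mLF} together with the deterministic shell controls is exactly the paper's route (though you should be careful with the phrase \enquote{per-step error}: the bound is uniform in $l$, not an accumulated sum over steps, which is what makes the argument work---leapfrog exactly conserves the modified Hamiltonian $H_{h,m}(x,v)=H_m(x,v)-\tfrac{h^2m}{8}|m^{1/2}x|^2$, so the error at every $l$ reduces to a difference of position norms controlled by the shell).
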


In case of NUTS using Hamiltonian flow, $A_I(x,v)=\Omega$ so that \eqref{eq:MultHMCtoUnifHMC} recovers \eqref{eq:NUTSisHMC_exact}.

\begin{proof}[Proof of \Cref{lem:MultHMCtoUnifHMC}]
Let $x\in D^{2S}_{\alpha}$.
We first prove \eqref{eq:MultHMCtoUnifHMC} stating that NUTS coincides with HMC with integration time distribution $\tau_\ast$ in the event $\{v\in E^{2S}_{\alpha,r}\}\cap A_I(x,v)$.
As discussed at the beginning of \textsection\ref{sec:mixing}, NUTS coincides with this instance of HMC in case of uniform orbit selection $I\sim\Unif(\mathfrak I_0(t_\ast))$ and uniform index selection $\iota\sim\Unif(I)$.

For NUTS using Hamiltonian flow, the former is ensured in the event $\{v\in E^{2S}_{\alpha,r}\}$ by \Cref{prop:2S}, while the latter holds due to the fact that Hamiltonian flow preserves the Hamiltonian.

For NUTS using leapfrog flow, \Cref{prop:2S} also guarantees uniform orbit selection in the event $\{v\in E^{2S}_{\alpha,r}\}$.
However, due to discretization error, the Boltzmann-weighted index selection is not uniform.
Instead, we restrict to the event $A_I(x,v)$ to obtain uniformity.
Indeed, given $(x,v) \in \mathbb{R}^{2d}$ and an index orbit $I \subset \mathbb{Z}$, the index $\iota\sim\cat\bigr(e^{-(H\circ\varphi_{hi}-H)(x,v)}\bigr)_{i\in I}$ in NUTS can be expressed as
\[ \iota\ =\ \iota_{\ac}\,\ind_{A_I(x,v)}+\iota_{\rj}\,\ind_{A_I(x,v)^c} \]
where
\[ \iota_{\ac}\sim\Unif(I)\quad\text{and}\quad \iota_{\rj}\sim\cat\bigr(e^{-(H\circ\Phi_h^i-H)(x,v)}-\min\nolimits_{i\in I}e^{-(H\circ\Phi_h^i-H)(x,v)}\bigr)_{i\in I} \]
are independent.
This dissects the Boltzmann-weighted categorical distribution into its maximal uniform part and a categorical remainder, cf. \cite{BouRabeeOberdoersterNUTS1}.
Consequently, in the event $\{v\in E^{2S}_{\alpha,r}\}\cap A_I(x,v)$ both orbit and index selection are uniform, implying that NUTS coincides with the desired instance of HMC.

For \eqref{eq:rej_prob} with Hamiltonian flow, note that since $A_I(x,v)=\Omega$ in this case,
\begin{equation}\label{eq:E2Sbound}
    \mathbb P\bigr(\{v\in E^{2S}_{\alpha,r}\}\cap A_I(x,v)\bigr)\ =\ \mathbb P\bigr(v\in E^{2S}_{\alpha,r}\bigr)\ =\ \gamma_d\bigr(E^{2S}_{\alpha,r}\bigr)
    \ \geq\ 1-8\,e^{-\frac18\min_{i\in\{1,2\}}r_i^2/d_i}
\end{equation}
by \eqref{eq:Eprob_2S}.
For \eqref{eq:rej_prob} concerning leapfrog flow, observe
\begin{equation}\label{eq:rej_prob_sum}
    \mathbb P\bigr(\{v\in E^{2S}_{\alpha,r}\}\cap A_I(x,v)\bigr)\ =\ \mathbb P\bigr(v\in E^{2S}_{\alpha,r}\bigr)-\mathbb P\bigr(\{v\in E^{2S}_{\alpha,r}\}\cap A_I(x,v)^c\bigr)\;.
\end{equation}
For the second term, the definition of $A_I(x,v)$ yields with the Hamiltonian \eqref{eq:H2S} and the leapfrog flow \eqref{eq:twoscaleLF} in the two-scale distribution
\begin{align*}
    &\mathbb P\bigr(A_I(x,v)^c|v,I\bigr)
\ =\ 1-|I|\min\nolimits_{l\in I}e^{-(H_{2S}\circ\Phi_{h,2S}^l-H_{2S})(x,v)}\Bigr(\sum_{l\in I}e^{-(H_{2S}\circ\Phi_{h,2S}^l-H_{2S})(x,v)}\Bigr)^{-1} \\
&\qquad\leq\ 1-e^{-2\sup\nolimits_{l\in I}|H_{2S}\circ\Phi_{h,2S}^l-H_{2S}|(x,v)}
\ \leq\ 2\sup\nolimits_{l\in\mathbb Z}\bigr|H_{2S}\circ\Phi_{h,2S}^l-H_{2S}\bigr|(x,v) \;.
\end{align*}
Inserting this bound into the second term of \eqref{eq:rej_prob_sum} shows
\begin{align*}
\mathbb P\bigr(\{v\in E^{2S}_{\alpha,r}\}\cap A_I(x,v)^c\bigr)\ &=\ \mathbb E\bigr(\mathbb P\bigr(A_I(x,v)^c|v,I\bigr)\,\ind_{v\in E^{2S}_{\alpha,r}}\bigr)\\
&\leq\ 2\,\mathbb E\bigr(\sup\nolimits_{l\in\mathbb Z}\bigr|H_{2S}\circ\Phi_{h,2S}^l-H_{2S}\bigr|(x,v)\,\ind_{v\in E^{2S}_{\alpha,r}}\bigr)\;. \end{align*}
Together with \eqref{eq:E2Sbound}, it holds
\begin{align}
    \mathbb P\bigr(\{v\in E^{2S}_{\alpha,r}\}\cap A_I(x,v)\bigr)\ \geq\ 1&-8\,e^{-\frac18\min_{i\in\{1,2\}}r_i^2/d_i}\nonumber\\
    &-2\,\mathbb E\bigr(\sup\nolimits_{l\in\mathbb Z}\bigr|H_{2S}\circ\Phi_{h,2S}^l-H_{2S}\bigr|(x,v)\,\ind_{v\in E^{2S}_{\alpha,r}}\bigr)\;.
\end{align}
Therefore, \eqref{eq:rej_prob} follows from
\[ \sup\nolimits_{l\in\mathbb Z}\bigr|H_{2S}\circ\Phi_{h,2S}^l-H_{2S}\bigr|(x,v)\ \leq\ h^2\max_{i\in\{1,2\}}\bigr(m_i\max(\alpha_i,r_i)+h^2m_i^2d_i\bigr)\quad\text{for all $v\in E^{2S}_{\alpha,r}$} \]
which we show next.

The two-scale energy error decomposes into the energy errors at the individual scales
\begin{equation}\label{eq:2SdeltaHdec}
    \bigr(H_{2S}\circ\Phi_{h,2S}^l-H_{2S}\bigr)(x,v)\ =\ \sum_{i=1}^2\bigr(H_{m_i}\circ\Phi_{h,m_i}^l-H_{m_i}\bigr)(x^i,v^i)\quad\text{for all $l\in\mathbb Z$.}
\end{equation}
We estimate the energy error at each scale separately.  Therefore, let
\[ \bigr(\mathrm x, \mathrm v, \mathrm d, \mathrm m, \mathrm a, \mathrm r\bigr)\ \in\ \bigr\{\bigr(x^i, v^i, d_i, m_i, \alpha_i, r_i\bigr)\bigr\}_{i=1,2}\;. \]
It then holds
\[ \bigr(H_{\mathrm m}\circ\Phi_{h,\mathrm m}^l-H_{\mathrm m}\bigr)(\mathrm x,\mathrm v)\ =\ \frac{h^2\mathrm m}{8}\Bigr(\bigr|\mathrm m^{1/2}\proj_1^{\mathrm d}\Phi_{h,\mathrm m}^l(\mathrm x,\mathrm v)\bigr|^2-|\mathrm m^{1/2}\mathrm x|^2\Bigr) \]
which follows from the fact that $\Phi_{h,\mathrm m}$ preserves the modified Hamiltonian $H_{h,\mathrm m}(\mathrm x,\mathrm v)=H_{\mathrm m}(\mathrm x,\mathrm v)-\frac{h^2\mathrm m}{8}|\mathrm m^{1/2}\mathrm x|^2$.  Taking absolute values and applying the triangle inequality yields
\begin{equation}\label{eq:deltaHabs}
\bigr|H_{\mathrm m}\circ\Phi_{h,\mathrm m}^l-H_{\mathrm m}\bigr|(\mathrm x,\mathrm v)\ \leq\ \frac{h^2\mathrm m}{8}\Bigr(\Bigr|\bigr|\mathrm m^{1/2}\proj_1^{\mathrm d}\Phi_{h,\mathrm m}^l(\mathrm x,\mathrm v)\bigr|^2-\mathrm d\Bigr|\ +\ \bigr||\mathrm m^{1/2}\mathrm x|^2-\mathrm d\bigr|\Bigr)\;.
\end{equation}
The second term satisfies
\begin{equation}\label{eq:ISdalpha}
\bigr||\mathrm m^{1/2}\mathrm x|^2-\mathrm d\bigr|\ \leq\ \mathrm a
\end{equation}
since $\mathrm x\in\mathrm m^{-1/2} D_{\mathrm a}^{\mathrm d}$.
For the first term, inserting the explicit leapfrog flow \eqref{eq:mLF}, using \eqref{eq:ISdalpha},
\[ \max\Bigr(\bigr||\mathrm v|^2-\mathrm d\bigr|,\,\sup\nolimits_{\mathrm x\in \mathrm m^{-1/2} D_{\mathrm a}^{\mathrm d}}|\mathrm m^{1/2}\mathrm x\cdot \mathrm v|\Bigr)\ \leq\ \mathrm r \]
valid since $\mathrm v\in E^{\mathrm d}_{\mathrm a,\mathrm r}$, $h^2\mathrm m\leq1$, $\mathrm r\leq\mathrm d$, and applying the elementary bound $1+\mathsf{b}^2/3\geq(1-\mathsf{b}^2/4)^{-1}$ for $\mathsf{b}\in(0,1]$, we obtain for all $t\in h\mathbb Z$
\begin{align}\label{eq:Phi2-d}
	&\Bigr|\bigr|\mathrm m^{1/2}\proj_1^{\mathrm d}\Phi_{h,\mathrm m}^{t/h}(\mathrm x,\mathrm v)\bigr|^2-\mathrm d\Bigr|\nonumber\\
    &\quad\leq\ \cos_{h,\mathrm m}^2(\mathrm m^{1/2}t)\,\bigr||\mathrm m^{1/2}\mathrm x|^2-\mathrm d\bigr|\ +\ \sin_{h,\mathrm m}^2(\mathrm m^{1/2}t)(1-h^2\mathrm m/4)^{-1}\bigr||\mathrm v|^2-\mathrm d\bigr|\nonumber\\
	&\quad\quad\ +\sin_{h,\mathrm m}^2(\mathrm m^{1/2}t)\bigr((1-h^2\mathrm m/4)^{-1}-1\bigr)\mathrm d\nonumber\\
    &\quad\quad\ +|\sin_{h,\mathrm m}|(2\mathrm m^{1/2}t)(1-h^2\mathrm m/4)^{-1/2}|\mathrm m^{1/2}\mathrm x\cdot \mathrm v|\nonumber\\
	&\quad\leq\ \cos_{h,\mathrm m}^2(\mathrm m^{1/2}t)\mathrm a+\sin_{h,\mathrm m}^2(\mathrm m^{1/2}t)(1+h^2\mathrm m/3)\mathrm r+h^2\mathrm m\mathrm d/3+(1+h^2\mathrm m/3)\mathrm r \nonumber\\
	&\quad\leq\ \max(\mathrm a,\mathrm r)+\mathrm r+h^2\mathrm m\mathrm d\;.
\end{align}
Inserting this together with \eqref{eq:ISdalpha} into \eqref{eq:deltaHabs} yields, for all $l\in\mathbb Z$,
\[ \bigr|H_{\mathrm m}\circ\Phi_{h,\mathrm m}^l-H_{\mathrm m}\bigr|(\mathrm x,\mathrm v)\ \leq\ \frac{h^2\mathrm m}{8}\bigr(3\max(\mathrm a,\mathrm r)+h^2\mathrm m\mathrm d\bigr)\ \leq\ \frac12h^2\mathrm m\max(\mathrm a,\mathrm r)+\frac18h^4\mathrm m^2\mathrm d\;. \]
Combining these bounds for the energy error at the individual scales to a bound for the the two-scale energy error shows
\begin{align*}
    \sup\nolimits_{l\in\mathbb Z}\bigr|H_{2S}\circ\Phi_{h,2S}^l-H_{2S}\bigr|(x,v)\ &\leq\ \sum_{i=1}^2\bigr(\frac12h^2m_i\max(\alpha_i,r_i)+\frac18h^4m_i^2d_i\bigr)\\
    &\leq\ h^2\max_{i\in\{1,2\}}\bigr(m_i\max(\alpha_i,r_i)+h^2m_i^2d_i\bigr)\;,
\end{align*}
closing the proof.
\end{proof}

The reduction of NUTS to HMC with state-independent integration time distribution requires $x\in D^{2S}_\alpha$.
This is a natural assumption as the target distribution $\gamma^{2S}$ concentrates in those sets, see \eqref{eq:D2Sbound}.
Conveniently, a bound from the last proof allows us to control the exit probability from such sets.
Let $T^{2S}_\alpha$ be the first exit time from $D^{2S}_\alpha$, i.e.,
\[ T^{2S}_\alpha\ =\ \min\{k\geq0\,:\,X_k\notin D^{2S}_\alpha\}\;. \]

\begin{lemma}\label{lem:stab_leapfrog}
Consider $\mu=\gamma^{2S}$ as defined in \eqref{eq:2S} with $0<m_1\leq m_2$ and $d_1,d_2\in\mathbb N$.
Let $\alpha^0=(\alpha^0_1,\alpha^0_2)$, $r=(r_1,r_2)$ and set
\begin{equation}\label{eq:alphan}
    \alpha_\hbar(n)\ =\ \bigr(\max(\alpha^0_i,r_i)+n(r_i+\hbar^2m_id_i)\bigr)_{i\in\{1,2\}}\quad\text{for $n\in\mathbb N$ and $\hbar\geq0$.}
\end{equation}
Assume $\alpha(n-1)_1\leq d_1$ and $\alpha(n-1)_2\leq d_2$.
Started in any $x^0\in D^{2S}_{\alpha^0}$, NUTS using Hamiltonian flow or leapfrog flow with step-size $h>0$ such that $h^2m_2\leq 1$ satisfies
\begin{equation}\label{eq:stab_NUTS}
    \mathbb P(T^{2S}_{\alpha_\hbar(n)}\leq n)\ \leq\ 8n\,e^{-\frac18\min_{i\in\{1,2\}}r_i^2/d_i}
\end{equation}
with $\hbar=0$ in case of Hamiltonian flow, and $\hbar=h$ in case of leapfrog flow.
\end{lemma}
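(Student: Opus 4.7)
The plan is to propagate the containment $X_k\in D^{2S}_{\alpha_\hbar(k)}$ inductively across the $n$ transitions of the chain, incurring a small failure probability per step arising from atypical velocity randomizations. A NUTS transition has the form $X_k=\proj_1^d\varphi_{h\iota_k}(X_{k-1},v_k)$ for some integer index $\iota_k$ (bounded by $2^{k_{\max}}$), regardless of whether Hamiltonian or leapfrog flow is used and of how $\iota_k$ is selected within NUTS. The key deterministic ingredient is the step-bound \eqref{eq:Phi2-d} already established in the proof of \Cref{lem:MultHMCtoUnifHMC}, applied separately to each of the two isotropic factors: whenever $X_{k-1}\in D^{2S}_{\alpha_\hbar(k-1)}$ and $v_k\in E^{2S}_{\alpha_\hbar(k-1),r}$, that bound, together with $\alpha_\hbar(k-1)_i\geq\max(\alpha^0_i,r_i)\geq r_i$, implies $X_k\in D^{2S}_{\alpha_\hbar(k)}$ for any choice of $\iota_k$. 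The Hamiltonian case follows by taking $h\to 0$ in \eqref{eq:Phi2-d}, yielding the same conclusion with $\hbar=0$.

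Next, let $G_k=\{v_k\in E^{2S}_{\alpha_\hbar(k-1),r}\}$ denote the favorable event at the $k$th transition, where $v_k\sim\gamma_d$ is the fresh velocity randomization independent of the past. Since $r_i\leq\alpha_\hbar(k-1)_i\leq\alpha_\hbar(n-1)_i\leq d_i$ by the lemma's hypothesis, the concentration estimate \eqref{eq:Eprob_2S} applies uniformly in $k\leq n$, yielding
\[ \mathbb P(G_k^c)\ \leq\ 8\,\exp\Bigr(-\tfrac18\min\nolimits_{i\in\{1,2\}}r_i^2/d_i\Bigr)\;. \]
A straightforward induction on $k$ using the deterministic step bound then establishes that on the event $\bigcap_{j=1}^k G_j$ one has $X_j\in D^{2S}_{\alpha_\hbar(j)}$ for all $j\leq k$; the base case $X_0=x^0\in D^{2S}_{\alpha^0}\subseteq D^{2S}_{\alpha_\hbar(0)}$ follows from $\alpha^0_i\leq\max(\alpha^0_i,r_i)=\alpha_\hbar(0)_i$. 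Consequently $\{T^{2S}_{\alpha_\hbar(n)}\leq n\}\subseteq\bigcup_{k=1}^n G_k^c$, and a union bound yields the claimed inequality \eqref{eq:stab_NUTS}.

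There is no genuine obstacle here: the argument amounts to iterating the deterministic one-step stability bound already appearing earlier in the excerpt, combined with a union bound over the $n$ velocity randomizations. The only point requiring care is the bookkeeping in \eqref{eq:alphan}, namely that the shell parameter $\alpha_\hbar(k)$ has to grow fast enough per step to absorb the increment $r_i+\hbar^2m_id_i$ while simultaneously preserving the admissibility condition $\alpha_\hbar(n-1)_i\leq d_i$, so that the concentration estimate \eqref{eq:Eprob_2S} remains valid at every step $k\leq n$ and at both scales.
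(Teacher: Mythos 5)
Your proposal is correct and follows essentially the same route as the paper's own proof: apply the one-step bound \eqref{eq:Phi2-d} factorwise to show that, whenever $x\in D^{2S}_{\alpha_\hbar(k)}$ and $v\in E^{2S}_{\alpha_\hbar(k),r}$, the next NUTS state stays in $D^{2S}_{\alpha_\hbar(k+1)}$ (using $\alpha_\hbar(k)_i\geq r_i$ to collapse the $\max$), invoke \eqref{eq:Eprob_2S} for the per-step failure probability, and union-bound over the $n$ transitions, with the Hamiltonian case recovered by $h\to0$. The only cosmetic difference is that you make the induction and union bound fully explicit, whereas the paper leaves them compressed into ``starting from $D^{2S}_{\alpha^0}$, this yields \eqref{eq:stab_NUTS}''.
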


\begin{proof}[Proof of \Cref{lem:stab_leapfrog}]
We prove the lemma for NUTS using leapfrog flow while the result for Hamiltonian flow follows analogously.
Let $x\in D^{2S}_{\alpha_h(k)}$ with $\alpha_h(k)$ as defined in \eqref{eq:alphan} such that $\alpha_h(k)_1\leq d_1$ and $\alpha_h(k)_2\leq d_2$, and $v\sim\gamma_{d_1+d_2}$.
Further, let $r=(r_1,r_2)$ with $r_1\leq d_1$ and $r_2\leq d_2$.
By \eqref{eq:Phi2-d},
\begin{equation*}\label{eq:Phi2-d_2}
    \Bigr|\bigr|m_i^{1/2}\proj_1^{d_i}\Phi_{h,m_i}^{l}(x^i,v^i)\bigr|^2-d_i\Bigr|\ \leq\ \max(\alpha_h(k)_i,r_i)+r_i+h^2m_id_i\ =\ \alpha_h(k+1)_i
\end{equation*}
for $v\in E^{2S}_{\alpha_h(k),r}$ and all $l\in\mathbb Z$.
In particular,
\[ \proj_1^{d_1+d_2}\Phi_{h,2S}^l(x,v)\ \in\ D^{2S}_{\alpha_h(k+1)}\quad\text{for all $l\in\mathbb Z$} \]
with probability at least $\mathbb P(E^{2S}_{\alpha_h(k),r})\geq1-8\,e^{-\frac18\min_{i\in\{1,2\}}r_i^2/d_1}$ by \eqref{eq:E2Sbound}.
As any transition of NUTS from $x$ is of the form given on the left hand side, the maximum expansion of $D^{2S}_{\alpha_h(k)}$ by NUTS is confined to $D^{2S}_{\alpha_h(k+1)}$ with high probability.
Starting from $D^{2S}_{\alpha^0}$, this yields \eqref{eq:stab_NUTS}.
\end{proof}

\subsection{Wasserstein Contraction of HMC}\label{sec:contr}

\begin{lemma}\label{lem:contr}
Consider $\gamma^{2S}$ as defined in \eqref{eq:2S} with $d_1,d_2\in\mathbb N$ and $0<m_1\leq m_2$.
Set
\[ \rho_\hbar\ =\ \frac12\inf_{m\in\{m_1,m_2\}}\int\sin^2(\beta_{\hbar^2m}m^{1/2}t)\,\tau_\ast(dt)\quad\text{for $\hbar\geq0$.} \]
HMC with integration time distribution $\tau_\ast$ as defined in \eqref{eq:tau_uniformHMC} satisfies
\[ \mathcal W^1_{|\cdot|_{2S}}\bigr(\pi_{\HMC(\tau_\ast)}(x,\cdot),\,\pi_{\HMC(\tau_\ast)}(\tilde x,\cdot)\bigr)\ \leq\ (1-\rho)|x-\tilde x|_{2S}\quad\text{for all $x,\tilde x\in\mathbb R^d$} \]
with rate $\rho=\rho_0$ in case Hamiltonian flow is used, and $\rho=\rho_h$ in case of leapfrog flow with step-size $h>0$ such that $h^2m_2\leq 1$ is used.
In both cases, there exists an absolute constant $c>0$ such that the rate is lower bounded according to $\rho_\hbar\geq c\min\bigr(m_1^{1/2}t_\ast,\,1\bigr)^2$.
\end{lemma}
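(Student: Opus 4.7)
The plan is to use a synchronous coupling: draw the same velocity $v\sim\gamma_d$ and integration time $T\sim\tau_\ast$ for two copies $X\sim\pi_{\HMC(\tau_\ast)}(x,\cdot)$ and $\tilde X\sim\pi_{\HMC(\tau_\ast)}(\tilde x,\cdot)$. Because the two-scale Hamiltonian $H_{2S}$ in \eqref{eq:H2S} decouples into isotropic Hamiltonians at each scale, both exact Hamiltonian flow \eqref{eq:HF} (with $\Lam=\diag(m_1 I_{d_1},m_2 I_{d_2})$) and leapfrog flow \eqref{eq:twoscaleLF}-\eqref{eq:mLF} act affinely on position, with linear part on $x^i$ being the scalar $\cos(\beta_{\hbar^2 m_i}m_i^{1/2}T)$ (with $\beta_0=1$) times identity. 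Under the synchronous coupling the velocity-dependent additive parts cancel from the difference, giving for each $i\in\{1,2\}$
\[
\proj_1^{d_i}X-\proj_1^{d_i}\tilde X\ =\ \cos\bigr(\beta_{\hbar^2 m_i}m_i^{1/2}T\bigr)\bigr(\proj_1^{d_i}x-\proj_1^{d_i}\tilde x\bigr).
\]

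From here the two-scale norm together with $\cos^2+\sin^2=1$ yields
\[
|X-\tilde X|_{2S}^2\ =\ |x-\tilde x|_{2S}^2\ -\ \sum_{i=1}^{2} m_i\sin^2\bigr(\beta_{\hbar^2 m_i}m_i^{1/2}T\bigr)\bigr|\proj_1^{d_i}(x-\tilde x)\bigr|^2.
\]
Setting $\alpha_i=\int\sin^2(\beta_{\hbar^2 m_i}m_i^{1/2}t)\,\tau_\ast(dt)$, the elementary estimate $\sum_i m_i\alpha_i|\cdot|^2\geq(\min_i\alpha_i)\,|x-\tilde x|_{2S}^2$, combined with Jensen's inequality and $\sqrt{1-y}\leq 1-y/2$ for $y\in[0,1]$, delivers $\mathbb E|X-\tilde X|_{2S}\leq(1-\rho_\hbar)|x-\tilde x|_{2S}$ with $\rho_\hbar=\tfrac12\min_i\alpha_i$ matching the stated definition. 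The contraction assertion then follows from the coupling characterization of $\mathcal W^1$ in \eqref{eq:tv_coupling}.

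To establish the lower bound $\rho_\hbar\geq c\min(m_1^{1/2}t_\ast,1)^2$, the key structural fact is that $\tau_\ast$ from \eqref{eq:tau_uniformHMC} is the law of $h(U_1-U_2)$ with $U_1,U_2$ independent and uniform on $\{0,\dots,2^{k_\ast}-1\}$; this follows from combining the uniform orbit selection over $\mathfrak I_0(t_\ast)$ (the starting-point position within the orbit) with the independent uniform index selection. A standard character-sum computation then gives, with $a=\beta_{\hbar^2 m}m^{1/2}$ and $n=2^{k_\ast}$,
\[
\int\sin^2(at)\,\tau_\ast(dt)\ =\ \tfrac12\Bigr(1-\frac{\sin^2(nah)}{n^2\sin^2(ah)}\Bigr),\qquad nh=t_\ast+h\in[t_\ast,2t_\ast].
\]
A case split on $at_\ast$ finishes: for $at_\ast\leq 1$, Taylor expanding $\sin$ near $0$ together with $\mathbb E\iota^2\asymp n^2$ gives $\int\sin^2\gtrsim(at_\ast)^2\asymp(m^{1/2}t_\ast)^2$; for $at_\ast\geq 1$, the bound $n\sin(ah)\gtrsim at_\ast$ forces the ratio strictly below a constant less than $1$, giving $\int\sin^2\gtrsim 1$. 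Infimizing over $m\in\{m_1,m_2\}$ and using $m_1\leq m_2$ picks out the $m_1$ contribution, producing the stated lower bound.

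The main obstacle is the resonance regime where $ah$ is close to a nonzero integer multiple of $\pi$, which would make $\sin(ah)\approx 0$ and the Fourier ratio degenerate. For leapfrog flow this is ruled out by the hypothesis $h^2m_2\leq 1$, confining $ah=\arccos(1-h^2m_i/2)\in(0,\pi/3]$ and hence ensuring $\sin(ah)\geq(2/\pi)ah$. For Hamiltonian flow, where no such constraint is imposed on $h$, one leans on the hypothesis \eqref{eq:prop_T} inherited from \Cref{prop:2S}, which forces $f^{2S}_{\unif}(h)$ to be bounded away from zero on $\mathfrak T\ni h$ and hence prevents $\sin(m_i^{1/2}h)$ from vanishing simultaneously at both scales; navigating this through the case analysis above, uniformly across the permissible step sizes, constitutes the technical heart of the argument.
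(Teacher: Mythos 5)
Your coupling and contraction argument match the paper's exactly: both use the synchronous coupling (same $v$ and $T$), exploit that position differences propagate linearly through exact or leapfrog flow with factor $\cos(\beta_{\hbar^2 m_i}m_i^{1/2}T)$ on each scale, and then pass through Jensen and $\sqrt{1-y}\leq 1-y/2$. The paper writes this via $\sup_m\bigl(\int\cos^2_{\hbar,m}\,\tau_\ast\bigr)^{1/2}$, you via $|x-\tilde x|_{2S}^2-\sum_i m_i\alpha_i|x^i-\tilde x^i|^2$; these are equivalent.

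For the lower bound on $\rho_\hbar$, you take a genuinely different route. You observe that $\tau_\ast$ is the law of $h(U_1-U_2)$ with $U_1,U_2\sim\Unif\{0,\dots,2^{k_\ast}-1\}$ independent and evaluate the integral exactly as the Fej\'er-type expression $\tfrac12\bigl(1-\sin^2(nah)/(n^2\sin^2(ah))\bigr)$. The paper instead lower-bounds $\sin$ linearly on $(0,\pi/2]$ under the hypothesis $\beta m^{1/2}h2^{k_\ast}\leq\pi/2$ and computes $\int t^2\,\tau_\ast(dt)$ directly, then asserts the complementary regime ``is easy to see''. Your identity is tidier and could in principle handle both regimes uniformly; that is a real advantage. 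You also correctly identify the resonance problem at small $\sin(ah)$ as the technical crux, which the paper glosses over.

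However, your case split is not actually carried out, and as stated the $at_\ast\geq1$ branch does not close. From $n\sin(ah)\geq (2/\pi)nah\gtrsim at_\ast$ and $\sin^2(nah)\leq 1$ you only obtain $\sin^2(nah)/(n^2\sin^2(ah))\lesssim (\pi/2)^2/(at_\ast)^2$, which is \emph{not} below $1$ for $at_\ast$ of order $1$ — you need the additional ingredient that $\sin(nah)/(nah)$ is itself bounded away from $1$ once $nah\geq 1$, and then combine it with a sharp enough linear lower bound for $\sin(ah)$ on the actual range of $ah$ (for leapfrog, $ah=\arccos(1-h^2m/2)\in(0,\pi/3]$ so $\sin(ah)\geq\tfrac{3\sqrt3}{2\pi}ah$, and one must check the resulting constant really beats $1$; with these two inputs one gets $\sin^2(nah)/(n^2\sin^2(ah))\leq 16/27$ for $nah\in(\pi/2,\pi]$, and $\leq 1/4$ for $nah>\pi$). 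Finally, your appeal to \eqref{eq:prop_T} to protect the Hamiltonian-flow case is not sufficient as stated: \eqref{eq:prop_T} only keeps the \emph{sum} $f^{2S}_{\unif}(h)$ away from $[-\delta,\delta)$ and does not preclude $\sin(m_2^{1/2}h)$ from vanishing on its own, which would make the $m=m_2$ integral (and hence $\rho_0$) zero. So the resonance issue you rightly flag is not dispatched by the argument you sketch; it needs a different hypothesis or a restriction on $h$ that the lemma, as stated, leaves implicit.
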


An analogous assertion holds for the standard norm.

\begin{proof}
We prove the contraction for leapfrog flow.
The result for Hamiltonian flow follows analogously.
Let $x=(x^1,x^2),\tilde x=(\tilde x^1,\tilde x^2)\in\mathbb R^{d_1+d_2}$ and consider the synchronous coupling
\[ (X,\widetilde X)\ =\ \bigr(\proj_1^{d_1+d_2}\Phi_{h,2S}^{T/h}(x,v),\,\proj_1^{d_1+d_2}\Phi_{h,2S}^{T/h}(\tilde x,v)\bigr) \]
of $\pi_{\HMC(\tau_\ast)}(x,\cdot)$ and $\pi_{\HMC(\tau_\ast)}(\tilde x,\cdot)$ obtained by using the same integration time $T\sim\tau_\ast$ and velocity $v\sim\gamma_{d_1+d_2}$ for both copies.
Inserting \eqref{eq:twoscaleLF} with \eqref{eq:mLF} into \eqref{eq:2Snorm} yields
\begin{align*}
    &\mathcal W^1_{|\cdot|_{2S}}\bigr(\pi_{\HMC(\tau_\ast)}(x,\cdot),\,\pi_{\HMC(\tau_\ast)}(\tilde x,\cdot)\bigr)\ \leq\ \mathbb E|X-\widetilde X|_{2S}\\
    &\quad\leq\ \Bigr(\mathbb E\cos_{h,m_1}^2(m_1^{1/2}T)\,m_1|x^1-\tilde x^1|^2+\mathbb E\cos_{h,m_2}^2(m_2^{1/2}T)\,m_2|x^2-\tilde x^2|^2\Bigr)^{1/2} \\
    &\quad\leq\ \sup_{m\in\{m_1,m_2\}}\Bigr(\int\cos_{h,m}^2(m^{1/2}t)\,\tau_\ast(dt)\Bigr)^{1/2}|x-\tilde x|_{2S} \\
    &\quad\leq\ \Bigr(1-\frac12\inf_{m\in\{m_1,m_2\}}\int\sin_{h,m}^2(m^{1/2}t)\,\tau_\ast(dt)\Bigr)|x-\tilde x|_{2S}
    \ =\ (1-\rho_h)|x-\tilde x|_{2S}\;.
\end{align*}

To lower bound the rate $\rho_\hbar$, let $m\in\{m_1,m_2\}$.
Then,
\begin{equation}\label{eq:contr_rateeq}
    \int\sin_{\hbar,m}^2(m^{1/2}t)\,\tau_\ast(dt)\ =\ 2\int_{(0,\infty)}\sin^2(\beta_{\hbar^2m}m^{1/2}t)\,\tau_\ast(dt)\;.
\end{equation}
Note that
\begin{equation}\label{eq:contr_est}
    \sin(\beta_{\hbar^2m}m^{1/2}t)\ \geq\ \frac2\pi\beta_{\hbar^2m}m^{1/2}t\quad\text{for $\beta_{\hbar^2m}m^{1/2}t\in(0,\pi/2)$.}
\end{equation}
As $t\leq \hbar2^{k_\ast}$ for $t\in\operatorname{supp}(\tau_\ast)$, assuming $\beta_{\hbar^2m}m^{1/2}\hbar2^{k_\ast}\leq\pi/2$ ensures \eqref{eq:contr_est} to hold for all $t\in\operatorname{supp}(\tau_\ast)$ so that
\begin{align*} 2\int_{(0,\infty)}\sin^2(\beta_{\hbar^2m}m^{1/2}t)\,\tau_\ast(dt)\ &\geq\ \frac8{\pi^2}\beta_{\hbar^2m}^2m\int_{(0,\infty)}t^2\,\tau_\ast(dt)\ \geq\ \frac8{\pi^2}m\sum_{j>0}j^2\frac{2^{k_\ast}-j}{2^{2k_\ast}}\\
&=\ \frac2{3\pi^2}\bigr(m^{1/2}\hbar2^{k_\ast}\bigr)^2\ \geq\ \frac2{3\pi^2}\bigr(m^{1/2}t_\ast\bigr)^2\;.
\end{align*}
For $\beta_{\hbar^2m}m^{1/2}\hbar2^{k_\ast}>\pi/2$, it is easy to see that \eqref{eq:contr_rateeq} is bounded below by some absolute constant.
Therefore, there exists an absolute constant $c>0$ such that
\[ \int\sin_{\hbar,m}^2(m^{1/2}t)\,\tau_\ast(dt)\ \geq\ 2c\min\bigr(m^{1/2}t_\ast,\,1\bigr)^2\quad\text{for $m\in\{m_1,m_2\}$.} \]
Since $m_1\leq m_2$, we see
\[ \rho_\hbar\ =\ \frac12\inf_{m\in\{m_1,m_2\}}\Bigr(\int\sin_{\hbar,m}^2(m^{1/2}t)\,\tau_\ast(dt)\Bigr)^{1/2}\ \geq\ c\min\bigr(m_1^{1/2}t_\ast,\,1\bigr)^2\;. \]
\end{proof}

\subsection{Partial Total Variation to Wasserstein Regularization of HMC}\label{sec:OS}

\begin{lemma}\label{lem:OS}
Suppose the assumptions of \Cref{lem:contr} hold.
Let $B\subset\mathbb R$ and set
\[ C_{\mathrm{reg},\hbar}(B)\ =\ \sup_{m\in\{m_1,m_2\}}\Bigr(\int_{B^c}\cot^2(\beta_{\hbar^2m}m^{1/2}t)\,\tau_\ast(dt)\Bigr)^{1/2}\quad\text{for $\hbar\geq0$.} \]
Then, HMC with integration time distribution $\tau_\ast$ as defined in \eqref{eq:tau_uniformHMC} satisfies
\[ \TV\bigr(\pi_{\HMC(\tau_\ast)}(x,\cdot),\,\pi_{\HMC(\tau_\ast)}(\tilde x,\cdot)\bigr)\ \leq\ C_{\mathrm{reg}}\,|x-\tilde x|_{2S}\ +\ \tau_\ast(B)\quad\text{for all $x,\tilde x\in\mathbb R^d$} \]
with regularization constant $C_{\mathrm{reg}}=C_{\mathrm{reg},0}(B)$ in case Hamiltonian flow is used, and $C_{\mathrm{reg}}=C_{\mathrm{reg},h}(B)$ in case of leapfrog flow with step-size $h>0$ such that $h^2m_2\leq 1$ is used.
In both cases, for all $\delta\in(0,1)$ and all $x,\tilde x\in\mathbb R^d$, it holds
\begin{equation}\label{eq:OS}
    \TV\bigr(\pi_{\HMC(\tau_\ast)}(x,\cdot),\,\pi_{\HMC(\tau_\ast)}(\tilde x,\cdot)\bigr)\ \leq\ 2\delta^{-1}|x-\tilde x|_{2S}+24\max\bigr(1,\,(m_1^{1/2}t_\ast)^{-1}\bigr)\,\delta\;.
\end{equation}
\end{lemma}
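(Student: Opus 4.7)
The plan is to condition on the integration time $T$ and exploit the Gaussian structure of the conditional HMC transition. Inserting \eqref{eq:mLF} into \eqref{eq:twoscaleLF} with $v\sim\gamma_{d_1+d_2}$ reveals that, given $T=t$, the output $X=\proj_1^{d_1+d_2}\Phi_{h,2S}^{T/h}(x,v)$ is Gaussian with mean $(\cos_{h,m_i}(m_i^{1/2}t)x^i)_{i=1,2}$ and block-diagonal covariance $\Sigma_t$ whose $i$-th block equals $\sin^2(\beta_{\hbar^2m_i}m_i^{1/2}t)(1-\hbar^2m_i/4)^{-1}m_i^{-1}I_{d_i}$. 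Pinsker's inequality applied to two Gaussians sharing the same covariance then gives, writing $\TV_t$ for the conditional TV,
\[ \TV_t^2\ \leq\ \tfrac14\sum_i\cot^2\bigr(\beta_{\hbar^2m_i}m_i^{1/2}t\bigr)(1-\hbar^2m_i/4)\,m_i|x^i-\tilde x^i|^2\ \leq\ \tfrac14\sum_i\cot^2\bigr(\beta_{\hbar^2m_i}m_i^{1/2}t\bigr)m_i|x^i-\tilde x^i|^2. \]

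For the first assertion I would bound $\TV\leq\tau_\ast(B)+\int_{B^c}\TV_t\,\tau_\ast(dt)$ and apply Cauchy--Schwarz to the integral: using $\tau_\ast(B^c)\leq1$, pulling $\sum_i m_i|x^i-\tilde x^i|^2$ out of the integral, and invoking the supremum over $m\in\{m_1,m_2\}$ in the definition of $C_{\mathrm{reg},\hbar}(B)$ yields $\int_{B^c}\TV_t\,\tau_\ast(dt)\leq\tfrac12 C_{\mathrm{reg},\hbar}(B)|x-\tilde x|_{2S}$, with the factor $1/2$ absorbed into the assertion. For \eqref{eq:OS}, I specialize to $B=B_1\cup B_2$ with $B_i=\{t\in h\mathbb Z:|\sin(\beta_{\hbar^2m_i}m_i^{1/2}t)|<\delta/2\}$. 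On $B^c$ one has $|\cot(\beta_{\hbar^2m_i}m_i^{1/2}t)|\leq2/\delta$ for both $i$, so $C_{\mathrm{reg},\hbar}(B)\leq2/\delta$, producing the coefficient of $|x-\tilde x|_{2S}$ in \eqref{eq:OS}.

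What remains is to show $\tau_\ast(B)\leq24\max(1,(m_1^{1/2}t_\ast)^{-1})\delta$. The condition $hj\in B_i$ forces $\beta_{\hbar^2m_i}m_i^{1/2}hj$ to lie within $\arcsin(\delta/2)\leq\pi\delta/4$ of a multiple of $\pi$. A counting argument based on at most $2\beta_{\hbar^2m_i}m_i^{1/2}t_\ast/\pi+1$ such multiples in the support, each contributing a $t$-interval of width $\pi\delta/(2\beta_{\hbar^2m_i}m_i^{1/2})$ containing at most $\pi\delta/(2\beta_{\hbar^2m_i}m_i^{1/2}h)+1$ grid points of $\tau_\ast$-mass $\leq2^{-k_\ast}\leq h/(t_\ast+h)$, yields $\tau_\ast(B_i)\lesssim\delta+\delta/(m_1^{1/2}t_\ast)$ after using $\beta_{\hbar^2m_i}\in[1,\pi/3]$, $h^2m_i\leq1$, and $m_1\leq m_i$; summing over $i=1,2$ and absorbing numerical constants into $24$ gives the claim. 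The main obstacle lies in the regime $m_1^{1/2}t_\ast\lesssim1$, where only the bad interval around the origin intersects $\operatorname{supp}(\tau_\ast)$ but may cover a large fraction of it, and the refined count around $t=0$ is what produces the factor $(m_1^{1/2}t_\ast)^{-1}$ that keeps the bound sharp; the secondary issue of commensurability of $\beta_{\hbar^2m_i}m_i^{1/2}h$ with $\pi$ is handled by $\beta_{\hbar^2m_i}m_i^{1/2}h\leq\pi/3$, which guarantees at least three grid points per sine period and keeps resonant contributions sparse enough to be absorbed.
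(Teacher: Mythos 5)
Your plan matches the paper's at every step: conditioning on $T=t$ and applying Pinsker to two Gaussians with common covariance is equivalent to the paper's maximal coupling of the velocities $v,\tilde v$ followed by Pinsker --- both reduce the conditional total variation to $\tfrac12\bigl(\sum_i\cot^2(\beta_{\hbar^2m_i}m_i^{1/2}t)\,m_i|x^i-\tilde x^i|^2\bigr)^{1/2}$ up to a constant --- your Cauchy--Schwarz split is the paper's Jensen step, and your bad set $\{|\sin(\beta_{\hbar^2m_i}m_i^{1/2}t)|<\delta/2\}$ is a trivial reparametrization of the paper's $B^l_{m,\delta}$. There is no conceptual divergence.

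Where the sketch does not close is the estimate $\tau_\ast(B)\lesssim\delta$. Your count multiplies (at most $2\beta m_i^{1/2}t_\ast/\pi+1$ multiples) by (at most $\pi\delta/(2\beta m_i^{1/2}h)+1$ grid points each) by (mass at most $2^{-k_\ast}$). Expanding this product yields, besides the two good terms $\lesssim\delta$ and $\lesssim\delta/(m_1^{1/2}t_\ast)$, two cross terms that do \emph{not} scale with $\delta$: a term of order $\beta m_i^{1/2}h$ coming from $(\#\text{multiples})\cdot1\cdot 2^{-k_\ast}$ after inserting $2^{-k_\ast}=h/(t_\ast+h)$, and a term of order $2^{-k_\ast}$ from $1\cdot1\cdot 2^{-k_\ast}$. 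Under $h^2m_2\leq1$ and $\beta\leq\pi/3$ the first can be as large as $2/3$ and the second as large as $1/2$, so these cannot be ``absorbed into $24$.'' The obstruction is genuine: the atom $\tau_\ast(\{0\})=2^{-k_\ast}$ lies in your $B$ for \emph{every} $\delta>0$, so any bound of the form $\tau_\ast(B)\leq C\delta$ implicitly requires $\delta\gtrsim 2^{-k_\ast}/C$. The paper's own proof bounds $|B^l_{m,\delta}\cap h\mathbb Z|$ by $2\delta/(\beta m^{1/2}h)$ with no $+1$, which silently drops exactly the terms you have surfaced. Your more honest count exposes the issue, but the appeal to ``the refined count around $t=0$'' and to ``sparse resonances'' is not carried out and does not, as stated, produce the factor $(m_1^{1/2}t_\ast)^{-1}$; you would need to either track the restriction on $\delta$ explicitly or split the count into the regimes $\pi\delta/(2\beta m_i^{1/2})\gtrless h$ and handle the $l=0$ interval separately.
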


\begin{proof}
We prove the regularization for HMC using leapfrog flow while the result using Hamiltonian flow again follows analogously.
Let $x=(x^1,x^2),\tilde x=(\tilde x^1,\tilde x^2)\in\mathbb R^{d_1+d_2}$, $T\sim\tau_\ast$ and $v,\tilde v\sim\gamma_{d_1+d_2}$.
Consider the coupling
\[ (X,\widetilde X)\ =\ \bigr(\proj_1^{d_1+d_2}\Phi_{h,2S}^{T/h}(x,v),\,\proj_1^{d_1+d_2}\Phi_{h,2S}^{T/h}(\tilde x,\tilde v)\bigr) \]
of $\pi_{\HMC(\tau_\ast)}(x,\cdot)$ and $\pi_{\HMC(\tau_\ast)}(\tilde x,\cdot)$ using the same path length but distinct velocities $v$ and $\tilde v$ coupled such that with maximal probability
\[\begin{aligned}[t]
    \tilde v^1 &= v^1+s^1(T)\quad\text{with}\quad s^1(T) = \cot_{h,m_1}(m_1^{1/2}T)(1-h^2m_1/4)^{1/2}m_1^{1/2}(x^1-\tilde x^1)\quad\text{and}\\
    \tilde v^2 &= v^2+s^2(T)\quad\text{with}\quad s^2(T) = \cot_{h,m_2}(m_2^{1/2}T)(1-h^2m_2/4)^{1/2}m_2^{1/2}(x^2-\tilde x^2)
\end{aligned}\]
in which case the copies meet, i.e., $X=\widetilde X$.
Writing $s=(s^1,s^2)$, we then have for any $B\subset\mathbb R$,
\begin{align*}
	\TV\bigr(\pi_{\HMC(\tau_\ast)}(x,\cdot),\,\pi_{\HMC(\tau_\ast)}(\tilde x,\cdot)\bigr)
\ &\leq\ \mathbb P(X\ne\widetilde X)
\ \leq\ \int_{B^c}\mathbb P\bigr(\tilde v\ne v+s(t)\bigr)\,\tau_\ast(dt)+\tau_k(B) \\
&=\ \int_{B^c}\TV\bigr(\gamma_{d_1+d_2}+s(t),\gamma_{d_1+d_2}\bigr)\,\tau_\ast(dt)+\tau_\ast(B)\;,
\end{align*}
where we used maximality of the coupling.
By Pinsker's inequality
\begin{align*}
    \TV\bigr(\gamma_{d_1+d_2}+s(t),\gamma_{d_1+d_2}\bigr)\ &\leq\ \sqrt2\, \mathrm{H}\bigr(\gamma_{d_1+d_2}\bigr|\gamma_{d_1+d_2}+s(t)\bigr)^{1/2}\ =\ |s(t)|\\
    &=\ \sqrt{|s^1(t)|^2+|s^2(t)|^2}
\end{align*}
so that together with Jensen's inequality, the definitions of $s^1,s^2$ and the two-scale norm \eqref{eq:2Snorm},
\begin{align*}
    &\TV\bigr(\pi_{\HMC(\tau_\ast)}(x,\cdot),\,\pi_{\HMC(\tau_\ast)}(\tilde x,\cdot)\bigr)\ \leq\ \int_{B^c}\sqrt{|s^1(t)|^2+|s^2(t)|^2}\,\tau_\ast(dt)+\tau_\ast(B)\\
    &\leq \Bigr(\int_{B^c}\bigr(|s^1(t)|^2+|s^2(t)|^2\bigr)\,\tau_\ast(dt)\Bigr)^{1/2} + \tau_\ast(B)\\
    &\leq \Bigr(\int_{B^c}\Bigr(\cot^2_{h,m_1}(m_1^{1/2}t)\,m_1|x^1-\tilde x^1|^2+\cot^2_{h,m_2}(m_2^{1/2}t)\,m_2|x^2-\tilde x^2|^2\Bigr)\,\tau_\ast(dt)\Bigr)^{1/2} \!\!+ \tau_\ast(B) \\
    &\leq \sup_{m\in\{m_1,m_2\}}\Bigr(\int_{B^c}\cot^2_{h,m}(m^{1/2}t)\,\tau_\ast(dt)\Bigr)^{1/2}\bigr(m_1|x^1-\tilde x^1|^2+m_2|x^2-\tilde x^2|^2\bigr)^{1/2} + \tau_\ast(B) \\
    &\leq \sup_{m\in\{m_1,m_2\}}\Bigr(\int_{B^c}\cot^2_{h,m}(m^{1/2}t)\,\tau_\ast(dt)\Bigr)^{1/2}|x-\tilde x|_{2S} + \tau_\ast(B)\;.
\end{align*}
This concludes the proof of the regularization.

\begin{figure}[t]
\centering
\begin{tikzpicture}[scale=1.2]
\clip ({-pi-1},-0.7) rectangle ({2*pi+1},5.5);

\draw[line width=0.7pt,->] ({-pi-1},0) -- ({2*pi+1},0) node [pos=1,below left] {$t$};
\draw[line width=0.7pt,->] (0,-0.1) -- (0,5.5);

\draw [gray,line width=1.5pt,smooth,samples=20,domain=(-2*pi+0.1):(-pi-0.1)] plot({\x},{cot(\x r)^2});
\draw [gray,line width=1.5pt,smooth,samples=20,domain=(-0.414):(-0.1)] plot({\x},{cot(\x r)^2});
\draw [gray,line width=1.5pt,smooth,samples=20,domain=(-pi+0.1):(-0.428)] plot({\x},{cot(\x r)^2});
\draw [gray,line width=1.5pt,smooth,samples=20,domain=0.1:(pi-0.1)] plot({\x},{cot(\x r)^2});
\draw [gray,line width=1.5pt,smooth,samples=20,domain=(pi+0.1):(2*pi-0.1)] plot({\x},{cot(\x r)^2});

\draw[line width=0.7pt,dashed] ({-pi-1},5) -- (-1.0,5);
\draw[line width=0.7pt,dashed] (0,5) -- ({2*pi+1},5);
\draw[line width=0.7pt,dashed] (0.42,0) -- (0.42,5);
\draw[line width=0.7pt,dashed] (-0.42,0) -- (-0.42,4.8);
\draw[line width=0.7pt,dashed] (2.72,0) -- (2.72,5);
\draw[line width=0.7pt,dashed] (-2.72,0) -- (-2.72,5);
\draw[line width=0.7pt,dashed] (3.56,0) -- (3.56,5);
\draw[line width=0.7pt,dashed] (-3.56,0) -- (-3.56,5);
\draw[line width=0.7pt,dashed] (5.86,0) -- (5.86,5);
\draw[line width=0.7pt,dashed] (-5.86,0) -- (-5.86,5);

\draw[line width=2.5pt] (-3.56,0) -- (-2.72,0);
\draw[line width=2.5pt] (-0.42,0) -- (0.42,0);
\draw[line width=2.5pt] (2.72,0) -- (3.56,0);
\draw[line width=2.5pt] (5.86,0) -- (6.70,0);

\foreach \k in {0,...,24} {
\draw ({2*pi*\k/25},0) -- ({2*pi*\k/25},{2*(1-\k/25)});
\filldraw ({2*pi*\k/25},{2*(1-\k/25)}) circle (1pt);
}
\foreach \k in {-20,...,-1} {
\draw ({2*pi*\k/25},0) -- ({2*pi*\k/25},{2*(1+\k/25)});
\filldraw ({2*pi*\k/25},{2*(1+\k/25)}) circle (1pt);
}

\node at (-pi,0) [below] {$B_{m,\delta}^{-1}$};
\node at (0,0) [below] {$B_{m,\delta}^0$};
\node at (pi,0) [below] {$B_{m,\delta}^1$};
\node at (2*pi,0) [below] {$B_{m,\delta}^{l_{max}}$};
\draw[line width=1pt] (-0.1,5) -- (0.1,5);
\node at (0,5) [left] {$\cot^2\delta$};
\node[gray] at (3.7,5.25) [right] {$\cot_{h,m}^2(m^{1/2}t)$};
\node at (pi/2,1.5) [above] {$\tau_\ast$};
\node at (-0.05,2) [above right] {$\scriptscriptstyle 2^{-{k_\ast}}$};

\end{tikzpicture}
\caption{\textit{Illustration of the proof of \eqref{eq:OS}.}} 
\label{fig:OSproof}
\end{figure}
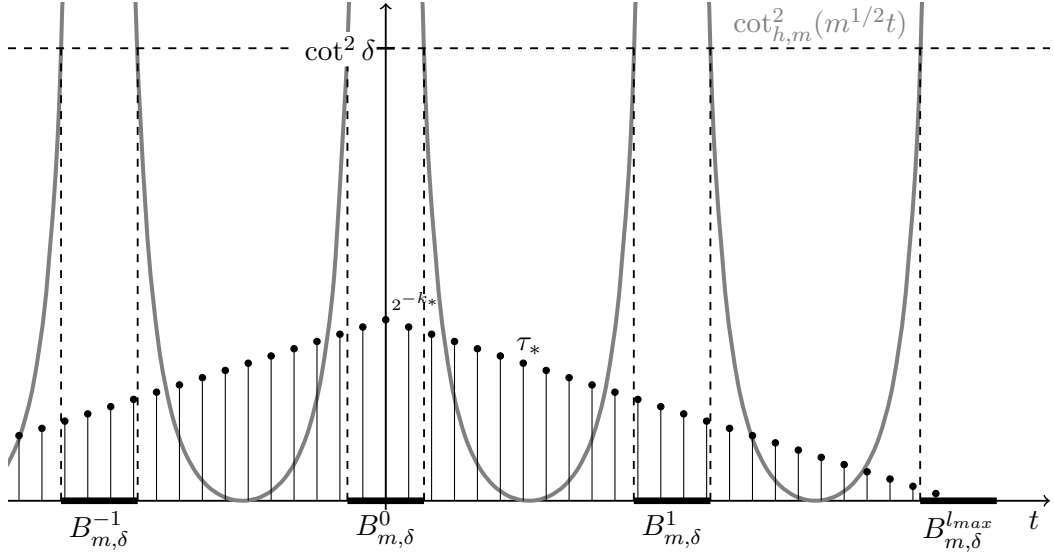

Let $\delta>0$.  To show \eqref{eq:OS}, we construct an explicit $B$.  Therefore, let $m\in\{m_1,m_2\}$ and define,
\[ B_{m,\delta}\ =\ \bigcup_{l\in\mathbb Z}B_{m,\delta}^l\quad\text{with}\quad B_{m,\delta}^l\ =\ \Bigr[\frac{\pi l-\delta}{\beta_{h^2m}m^{1/2}},\,\frac{\pi l+\delta}{\beta_{h^2m}m^{1/2}}\Bigr)\;. \]
Since for $t\in B_{m,\delta}^c$, $\cot_{h,m}^2(m^{1/2}t)\leq\cot^2\delta$,
\begin{equation}\label{eq:Bmdelta_1}
    \int_{B_{m,\delta}^c}\cot^2_{h,m}(m^{1/2}t)\,\tau_\ast(dt)\ \leq\ \cot^2\delta\,\tau_\ast(B_{m,c}^c)\ \leq\ \sin^{-2}\delta\quad\text{for $m\in\{m_1,m_2\}$.}
\end{equation}
Further, since $\operatorname{supp}(\tau_\ast)\subset\bigr[-h2^{k_\ast},\,h2^{k_\ast}\bigr]$,
\begin{equation}\label{eq:Bmdelta_sum}
    \tau_\ast(B_{m,\delta})\ =\ \sum_{l_{min}\leq l\leq l_{max}}\tau_\ast(B_{m,\delta}^l)
\end{equation}
with
\begin{align*}
    l_{min}\ &=\ \inf\bigr\{l\in\mathbb Z:\operatorname{supp}(\tau_\ast)\cap B_{m,\delta}^l\ne\emptyset\bigr\}\ \geq\ \inf\Bigr\{l\in\mathbb Z:-h2^{k_\ast}<\frac{\pi l+\delta}{\beta_{h^2m}m^{1/2}}\Bigr\}\\
    &=\ -\Bigr(\Bigr\lceil\frac{\beta_{h^2m}m^{1/2}h2^{k_\ast}+\delta}{\pi}\Bigr\rceil-1\Bigr)\ \geq\ -\Bigr\lceil\frac{\beta_{h^2m}m^{1/2}h2^{k_\ast}}{\pi}\Bigr\rceil \\
    l_{max}\ &=\ \sup\bigr\{l\in\mathbb Z:\operatorname{supp}(\tau_\ast)\cap B_{m,\delta}^l\ne\emptyset\bigr\}\ \leq\ \sup\Bigr\{l\in\mathbb Z:\frac{\pi l-\delta}{\beta_{h^2m}m^{1/2}}\leq h2^{k_\ast}\Bigr\}\\
    &=\ \Bigr\lfloor\frac{\beta_{h^2m}m^{1/2}h2^{k_\ast}+\delta}{\pi}\Bigr\rfloor\ \leq\ \Bigr\lceil\frac{\beta_{h^2m}m^{1/2}h2^{k_\ast}}{\pi}\Bigr\rceil
\end{align*}
where we used $\delta/\pi<1$ in the respective last steps.
The individual summands are bounded according to
\[ \tau_\ast(B_{m,\delta}^l)\ \leq\ \sup_{t\in\mathbb R}\tau_\ast(\{t\})\,\bigr|B_{m,\delta}^l\cap h\mathbb Z\bigr|\ \leq\ 2^{-k_\ast}\frac{2\delta}{\beta_{h^2m}m^{1/2}h} \]
where the absolute value denotes the counting measure.
Inserting into \eqref{eq:Bmdelta_sum} yields
\begin{align}\label{eq:Bmdelta_2}
    \tau_\ast(B_{m,\delta})\ &\leq\ \frac{2\delta}{\beta_{h^2m}m^{1/2}h2^{k_\ast}}(l_{max}-l_{min}+1)\nonumber\\
    &\leq\ \frac{2\delta}{\beta_{h^2m}m^{1/2}h2^{k_\ast}}\Bigr(2\Bigr\lceil\frac{\beta_{h^2m}m^{1/2}h2^{k_\ast}}{\pi}\Bigr\rceil+1\Bigr) \nonumber\\
    &\leq\ 2\max\Bigr(\frac4\pi,\,\frac6{m^{1/2}h2^{k_\ast}}\Bigr)\,\delta\nonumber\\
    &\leq\ 12\max\bigr(1,\,(m_1^{1/2}h2^{k_\ast})^{-1}\bigr)\,\delta\quad\text{for $m\in\{m_1,m_2\}$.}
\end{align}
Combining \eqref{eq:Bmdelta_1} and \eqref{eq:Bmdelta_2} shows for $B=B_{m_1,\delta}\cup B_{m_2,\delta}$,
\begin{align*}
    &\sup_{m\in\{m_1,m_2\}}\Bigr(\int_{B^c}\cot^2_{h,m}(m^{1/2}t)\,\tau_\ast(dt)\Bigr)^{1/2}|x-\tilde x|_{2S} + \tau_\ast(B) \\
    &\quad\leq\ \sup_{m\in\{m_1,m_2\}}\Bigr(\int_{B_{m,\delta}^c}\cot^2_{h,m}(m^{1/2}t)\,\tau_\ast(dt)\Bigr)^{1/2}|x-\tilde x|_{2S} + 2\sup_{m\in\{m_1,m_2\}}\tau_\ast(B_{m,\delta}) \\
    &\quad\leq\ \sin^{-1}\delta\,|x-\tilde x|_{2S} + 24\max\bigr(1,\,(m_1^{1/2}h2^{k_\ast})^{-1}\bigr)\,\delta \\
    &\quad\leq\ 2\delta^{-1}|x-\tilde x|_{2S}+24\max\bigr(1,\,(m_1^{1/2}t_\ast)^{-1}\bigr)\,\delta\quad\text{for $\delta\in(0,1)$.}
\end{align*}
\end{proof}

\subsection{Proof of Theorem~\ref{thm:NUTSmixing}}\label{sec:mixing_proof}

\begin{proof}[Proof of \Cref{thm:NUTSmixing}]

By \Cref{lem:MultHMCtoUnifHMC}, 
\[ \pi_{\NUTS}(x,\cdot)\,\ind_{\{v\in E^{2S}_{\alpha,r}\}\cap A_I(x,v)}\ =\ \pi_{\mathrm{HMC}(\tau_\ast)}(x,\cdot)\,\ind_{\{v\in E^{2S}_{\alpha,r}\}\cap A_I(x,v)}\quad\text{for all $x\in D^{2S}_{\alpha}$,} \]
i.e., NUTS fits the definition of an accept/reject chain with accept chain $\pi_{\mathrm{HMC}(\tau_\ast)}$ and accept event $\{v\in E^{2S}_{\alpha,r}\}\cap A_I(x,v)$, cf. \eqref{eq:ar}.

To obtain mixing time bounds for NUTS, we apply the coupling framework for accept/reject chains given in \Cref{thm:ARmix} with state space $S=\mathbb R^d$, metric induced by $|\cdot|_{2S}$, and domain $D=D^{2S}_\alpha$ for a suitable $\alpha=(\alpha_1,\alpha_2)$ to which we localize.
Therefore, we verify Assumptions (i)-(iv).

For HMC with integration time distribution $\tau_\ast$, Wasserstein contraction (Assumption (i)) holds by \Cref{lem:contr} with rate $\rho=\Omega(\min(m_1^{1/2}t_\ast,\,1)^2)$.
Partial total variation to Wasserstein regularization (Assumption (ii)) holds by \Cref{lem:OS} with $C_{\mathrm{Reg}}=O(\min(m_1^{1/2}t_\ast,\,1)^{-1})$ and an absolute constant $0<c<1$, which results from inserting $\delta=\Theta(\min(m_1^{1/2}t_\ast,\,1))$ into \eqref{eq:OS}.

Assumption (iii) requires $\mathfrak E\in\mathbb N$ and $b>0$ to be chosen such that
\begin{align}\label{eq:requiredBd}
    &2\,\mathfrak E\,\sup\nolimits_{x\in D^{2S}_\alpha}\mathbb{P}\bigr(\bigr(\{v\in E^{2S}_{\alpha,r}\}\cap A_I(x,v)\bigr)^c\bigr)\ +\ C_{\mathrm{Reg}}\diam_{|\cdot|_{2S}}(D^{2S}_\alpha)\,e^{-\rho(\mathfrak E-1)}\ +\ b\nonumber\\
    &\quad\leq\ 1\ -\ c\;.
\end{align}
Inserting $\rho,C_{\mathrm{Reg}},c$ and the bound
\[ \sup\nolimits_{x\in D^{2S}_\alpha}\mathbb P\bigr(\bigr(\{v\in E^{2S}_{\alpha,r}\}\cap A_I(x,v)\bigr)^c\bigr)\ \leq\ 
\begin{aligned}[t]
&8\,e^{-\frac18\min_{i\in\{1,2\}}r_i^2/d_i}\\
&+2\hbar^2\max_{i\in\{1,2\}}\bigr(m_i\max(\alpha_i,r_i)+\hbar^2m_i^2d_i\bigr)\;,
\end{aligned} \]
valid with $\hbar=0$ for Hamiltonian flow and $\hbar=h$ for leapfrog flow as shown in \Cref{lem:MultHMCtoUnifHMC}, together with the fact that $\diam_{|\cdot|_{2S}}(D^{2S}_\alpha)=O(d^{1/2})$, we conclude the existence of an absolute constant $b>0$ such that \eqref{eq:requiredBd} holds with
\begin{align}
\mathfrak E\ &=\ \widetilde\Theta(\rho^{-1})\ =\ \widetilde\Theta\bigr(\min(m_1^{1/2}t_\ast,\,1)^{-2}\bigr)\;, \nonumber\\
r_i\ &=\ \widetilde\Theta(d_i^{1/2})\quad\text{for $i\in\{1,2\}$, and, in case of leapfrog flow, all} \label{eq:proof_r}\\
h\ &\leq\ \bar h\ =\ \widetilde\Omega\bigr(\mathfrak E^{-1/2}m_2^{-1/2}\max(\alpha_1,\alpha_2,r_1,r_2,d_1^{1/2},d_2^{1/2})^{-1/2}\bigr)\;.\label{eq:proof_h}
\end{align}

This results in a horizon $\mathfrak H=\widetilde\Theta(\mathfrak E)=\widetilde\Theta\bigr(\min(m_1^{1/2}t_\ast,\,1)^{-2}\bigr)$.
Assumption (iv) requires exit probability bounds from $D^{2S}_\alpha$ for NUTS over this horizon initialized in both $x^0$ and the stationary distribution $\gamma^{2S}$.
Therefore, we choose suitably large $\alpha_1$ and $\alpha_2$.
For start in $x^0$, \Cref{lem:stab_leapfrog} asserts
\begin{equation}\label{eq:stab_NUTS_pf}
    \mathbb P(T^{2S}_{\alpha_\hbar(\mathfrak H)}\leq \mathfrak H)\ \leq\ 8\mathfrak H\,e^{-\frac18\min_{i\in\{1,2\}}r_i^2/d_i}
\end{equation}
with
\begin{equation}\label{eq:alphan_pf}
    \alpha_\hbar(\mathfrak H)\ =\ \bigr(\max(\alpha^0_i,r_i)+\mathfrak H(r_i+\hbar^2m_id_i)\bigr)_{i\in\{1,2\}}
\end{equation}
for $\hbar=0$ in case of Hamiltonian flow and $\hbar=h$ in case of leapfrog flow.
Choosing $\alpha=\alpha_\hbar(\mathfrak H)$ and for $r_i$ as in \eqref{eq:proof_r}, \eqref{eq:stab_NUTS_pf} provides the required exit probability bound for start in $x^0$.
For start in stationarity $\bar x^0\sim\gamma^{2S}$, note that $\mathbb P(\bar x^0\in D^{2S}_{\bar\alpha^0})\geq1-\eps/8$ for $\bar\alpha^0_i=\widetilde\Theta(d_1^{1/2})$, $i=1,2$, by \eqref{eq:D2Sbound}.
Hence, the exit probability bound in this case results from restricting to start in $D^{2S}_{\bar\alpha^0}$ and applying \eqref{eq:stab_NUTS_pf} with the corresponding $\alpha_\hbar(\mathfrak H)$.

Inserting our choices into \eqref{eq:proof_h} yields
\[ \bar h\ =\ \widetilde\Omega\bigr(m_2^{-1/2}d^{-1/4}\min(m_1^{1/2}t_\ast,\,1)^2\bigr)\;. \]

Thus, all assumptions of \Cref{thm:ARmix} are satisfied, thereby establishing
\[ \tmix(\eps,x^0)\ \leq\ \mathfrak H\ =\ \widetilde\Theta\bigr(\min(m_1^{1/2}t_\ast,\,1)^{-2}\bigr)\;. \]

The dichotomy follows immediately from the mixing time bound and the dichotomy shown in \Cref{prop:2S}.
\end{proof}

\printbibliography

\end{document}